\definecolor{Ggreen}{RGB}{65,150,111} %
\renewcommand{\epsilon}{\varepsilon}
\renewcommand{\em}{\it}
\let\theta\vartheta
\let\phi\varphi
\DeclareMathAlphabet{\doba}{U}{msb}{m}{n}
\gdef\mC{\doba{C}}
\gdef\mN{\doba{N}}
\gdef\mR{\doba{R}}
\gdef\mF{\doba{F}}
\gdef\mZ{\doba{Z}}
\def\supp{{\mathop{\rm supp}}}
\def\vo{{\mathop{\rm dvol}}}
\def\Id{\operatorname{Id}}
\def\a{{\mathfrak{a}}}
\newtheorem{lem}{Lemma}
\newtheorem{cor}[lem]{Corollary}
\newtheorem{thm}[lem]{Theorem}
\theoremstyle{definition}
\newtheorem{defi}[lem]{Definition}
\newtheorem{ex}[lem]{Example}
\newtheorem{rem}[lem]{Remark}
\newcommand{\uD}{\mathrm{D}}
\newcommand{\uA}{\mathcal{A}}
\newcommand{\uT}{\mathcal{T}}
\newcommand{\beq}{\begin{equation}}
\newcommand{\eeq}{\end{equation}}
\newcommand{\real}{\mathbb{R}}
\newcommand{\R}{\mathbb{R}}
\newcommand{\rn}{\mathbb{R}^n}
\newcommand{\Hsp}{H^s_p}
\newcommand{\nat}{\mathbb{N}}
\newcommand{\Tr}{\mathrm{Tr}}
\newcommand{\F}{F^s_{p,q}(\mathbb{R}^n)}
 \newcounter{mnotecount}[section]
\begin{document}

\title{Sobolev spaces on Riemannian manifolds with bounded geometry:\\ General coordinates and traces}
\author{Nadine Gro\ss{}e and Cornelia Schneider} 
\subjclass[2010]{46E35, 53C20}
\keywords{Sobolev spaces, Riemannian manifolds, bounded geometry, Fermi coordinates, traces, vector bundles, Besov spaces, Triebel-Lizorkin spaces.}
\date{\today}

\maketitle

\begin{abstract}
We study fractional Sobolev and Besov spaces on noncompact Riemannian manifolds with bounded geometry. Usually, these spaces are  defined via geodesic normal coordinates which,  depending on the problem at hand, may often not be the best choice. We consider a more general definition  subject  to different local coordinates and give sufficient conditions on the corresponding coordinates resulting in  equivalent norms. Our main application is the computation of traces on submanifolds  with the help of Fermi coordinates. Our results also hold for corresponding spaces defined on vector bundles of bounded geometry and, moreover,  can be generalized to Triebel-Lizorkin spaces on manifolds,  improving \cite{Skr90}. 
\end{abstract}

\section{Introduction}

The main aim of  this paper is to consider fractional Sobolev spaces on noncompact Riemannian manifolds, equivalent characterizations of these spaces and their traces on submanifolds.  We  address the problem to what extend  results from classical analysis on Euclidean space carry over to the setting of Riemannian manifolds -- without making any unnecessary assumptions about the manifold. 
In particular, we will be interested in  noncompact manifolds since the compact case presents no difficulties and is well understood.  \\

Let $(M,g)$ denote an $n$-dimensional, complete, and noncompact Riemannian manifold with Riemannian metric $g$. 
Fractional Sobolev spaces on manifolds $H^s_p(M)$,  $s\in \real$, $1<p<\infty$,  can be defined similar to corresponding Euclidean spaces $H^s_p(\rn)$, usually characterized via
\[
H^s_p=(\mathrm{Id} -\Delta)^{-s/2}L_p,
\]
by replacing the Euclidean Laplacian $\Delta$ with the Laplace-Beltrami operator on $(M,g)$  and using an auxiliary parameter $\rho$, see Section \ref{3.1}. The spaces $H^s_p(M)$ were introduced and studied in detail in \cite{strich} 
and  generalize in a natural way classical Sobolev spaces on manifolds, $W^k_p(M)$, which contain all $L_p$ functions on $M$ having bounded covariant derivatives up to order $k\in \nat$,  cf. \cite{Aub1,Aub2}. \\
{To avoid any confusion, let us emphasize that in this article we  study exactly these  fractional Sobolev spaces $H^s_p(M)$ defined by means of powers of $\Delta$.  But we shall use   an alternative characterization  of these spaces on manifolds with bounded geometry as definition -- having in mind the proof of our main theorem.\\
To be more precise,}  on  manifolds with bounded geometry, see Definition \ref{bdd_geo},  one can alternatively define fractional Sobolev spaces $H^s_p(M)$ via localization and pull-back onto $\rn$, by using geodesic normal coordinates and  corresponding fractional Sobolev spaces on $\R^n$, cf. \cite[Sections~7.2.2,~7.4.5]{Tri92} and also \cite[Definition~1]{Skr98}. Unfortunately, for some applications the choice of geodesic normal coordinates is not convenient, which is why we do not wish to restrict ourselves to these coordinates only.  The main application we have in mind are traces on submanifolds $N$ of $M$. But also  for manifolds with symmetries, product manifolds or warped products, geodesic normal coordinates may not be the first and natural choice and one is interested in coordinates better suited to the problem at hand. 

Therefore,  we  introduce in Definition \ref{H-koord}  Sobolev spaces $H^{s,\uT}_p(M)$ in a more general way,  containing all those complex-valued distributions $f$ on $M$ such that 
\beq\label{intro-1}
\Vert f\Vert_{H^{s,\uT}_{p}}:=\left(\sum_{\alpha\in I} \Vert (h_\alpha f)\circ \kappa_\alpha\Vert^p_{H^s_p(\mathbb{R}^n)}\right)^{1/p}
\eeq
is finite, where $\uT=(U_{\alpha},\kappa_{\alpha},h_{\alpha})_{\alpha\in I}$  denotes a trivialization of $M$ consisting of a uniformly locally finite covering $U_{\alpha}$, local coordinates $\kappa_{\alpha}:V_{\alpha}\subset \rn\rightarrow U_{\alpha}\subset M$ (not necessarily geodesic normal coordinates) and a subordinate partition of unity $h_{\alpha}$.  Of course,  the case of local coordinates $\kappa_{\alpha}$  being geodesic normal coordinates is covered but we can choose from a larger set of trivializations.  Clearly, we are not interested in all $\uT$ but merely the so called admissible trivializations $\uT$, cf.  Definition \ref{bddcoord}, yielding the coincidence  $$H^{s,\uT}_p(M)=H^s_p(M),$$ cf. Theorem \ref{indep_H}.

As pointed out earlier, our main applications in mind are Trace Theorems. In \cite[Theorem 1]{Skr90}, traces on manifolds were studied using the Sobolev norm \eqref{intro-1} with geodesic normal coordinates. Since these coordinates in general do not take into account the structure of the underlying submanifold where  the trace is taken, one is limited to so-called {\em geodesic} submanifolds. This is highly restrictive, since geodesic submanifolds are very exceptional. Choosing coordinates that are more adapted to the situation will immediately enable us to compute the trace on a much larger class of submanifolds. In particular, we consider Riemannian manifolds $(M,g)$ with submanifolds $N$ such that $(M,N)$ is of bounded geometry, see Definition \ref{bdd_geo}, i.e., $(M,g)$ is of bounded geometry, the mean curvature of $N$ and its covariant derivatives are uniformly bounded, the injectivity radius of $(N,g_N)$ is positive and  there is a uniform collar of $N$.

The coordinates of choice for proving Trace Theorems  are Fermi coordinates,  introduced in Definition \ref{FC}. We show in Theorem \ref{FC_admis} that for a certain cover with Fermi coordinates there is a subordinated partition of unity such that the resulting trivialization is admissible.

The main Trace Theorem itself is stated in  Theorem \ref{trace-th}, where we prove that if 
 $M$ is a manifold of  dimension $n\geq 2$, $N$  a submanifold of dimension $k<n$, and   $(M,N)$  of bounded geometry, we have for  $s>\frac{n-k}{p}$,
\beq\label{intro-2}\Tr_N \; H^s_p(M)= B^{s-\frac{n-k}{p}}_{p,p}(N).
\eeq
i.e., there is a linear, bounded and surjective trace operator $\Tr_N$ with a linear and bounded right inverse $\text{Ex}_M$ from the trace space into the original space such that $\Tr_N\circ \text{Ex}_M=\Id$, where $\Id$ denotes the identity on operator $N$.
The spaces on the right hand side of \eqref{intro-2} are  Besov spaces obtained via real interpolation of the spaces $H^s_p$, cf. Remark \ref{rem-B}. When just asking for $\Tr_N$ to be linear and bounded, one can reduce the assumptions on $(M,N)$ further by replacing the existence of a collar of $N$ with a uniform local collar, cf. Remark \ref{rem_th_loccoll}.
 
We believe that the method presented in this article is very well suited to tackle the trace problem on manifolds. One could also think of computing traces using atomic decompositions of the spaces $H^s_p(M)$ as established in  \cite{Skr98}, which is often done when dealing with traces on hyperplanes of $\rn$ or on domains. But on (sub-)manifolds  it should be complicated (if not impossible) to   obtain a linear and continuous extension operator from the trace space into the source space --  which by our method follows immediately from corresponding results on $\rn$. \\

In Section \ref{sec_vec_bu}, we establish analogous results for vector bundles of bounded geometry. An application of our trace result for vector bundles, Theorem \ref{trace-th_vec}, may be found in \cite{GN}, where the authors classify boundary value problems of the Dirac operator on $\text{spin}^{\mC}$ bundles of  bounded geometry, deal with the existence of a solution, and obtain some spectral estimates for the Dirac operator on hypersurfaces of bounded geometry.\\

As another application of our general coordinates spaces with symmetries are considered in  Section \ref{outlook_1}.  We restrict ourselves to the straight forward  case where the symmetry group is discrete and obtain a generalization of a theorem from  \cite[Section 9.2.1]{T-F1}, where the author characterizes Sobolev spaces on the  tori $\mathbb{T}^n:=\rn/ \mathbb{Z}^n$  via  weighted Sobolev spaces on $\mR^n$ containing $\mZ^n$ periodic distributions only.\\

Finally, in Section \ref{sec_TL}  we deal with the larger scale of Triebel-Lizorkin spaces $F^{s,\uT}_{p,q}(M)$, $s\in \real$, $0<p<\infty$, $0<q\leq \infty$ or $p=q=\infty$, linked with  fractional Sobolev spaces via 
\[
F^{s,\uT}_{p,2}(M)=H^{s,\uT}_p(M), \qquad s\in \real, \quad 1<p<\infty, 
\]
and the general scale of Besov spaces $B^{s,\uT}_{p,q}(M)$, $s\in \real$, $0<p,q\leq \infty$ defined via real interpolation of the spaces $F^{s,\uT}_{p,q}(M)$, cf. Definition \ref{F-koord}. We will show that an admissible trivialization $\uT$ again guarantees coincidence with the corresponding spaces $F^s_{p,q}(M)$, $B^s_{p,q}(M)$ --  obtained from choosing geodesic normal coordinates, cf. \cite[Sections 7.2, 7.3]{Tri92} -- and that  trace results from Euclidean space carry over to our setting of submanifolds $N$ of $M$, where $(M,N)$ is of bounded geometry. In particular,  if now
\begin{equation}\label{intro-3}
s-\frac{n-k}{p}>k \max\left(0,\frac 1p-1\right), 
\end{equation} 
we have 
\begin{equation*}
\Tr \ F^s_{p,q}(M)=B^{s-\frac{n-k}{p}}_{p,p}(N) \qquad \text{\ and\ }\qquad 
\Tr \ B^s_{p,q}(M)=B^{s-\frac{n-k}{p}}_{p,q}(N), 
\end{equation*}
cf. Theorem \ref{trace-th-gen}. The restriction \eqref{intro-3} is natural and best possible also in the  Euclidean case. \\

{\bf Acknowledgement.} { We are grateful to Sergei V. Ivanov who kindly answered our question on mathoverflow concerning the equivalence of different characterizations on manifolds of bounded geometry.} Moreover, we thank Hans Triebel for  helpful discussions on the subject. The second author thanks the University of Leipzig  for the hospitality and support during a short term visit in Leipzig.

\section{Preliminaries and notations}

\paragraph{\bf General notations.}  Let $\nat$ be the collection of all natural numbers, and let $\nat_0 = \nat \cup \{0 \}$. Let $\rn$ 
be the $n$-dimensional Euclidean space, $n \in \nat$, $\mathbb{C}$ the complex plane, and let $B_r^n$  denote the ball in $\mR^n$ with center $0$ and radius $r$ {(sometimes simply denoted by $B_r$ if there is no danger of confusion)}. Moreover, index sets are always assumed to be countable, and we use the Einstein sum convention.

Let the standard coordinates on $\mR^n$ be denoted by $x=(x^1,x^2,\ldots, x^n)$.
The partial derivative operators in direction of the coordinates are denoted by $\partial_i=\partial/\partial x^i$ for $1\leq i\leq n$.  
The set of multi-indices $\a=(\a_1, \dots, \a_n)$, $\a_i\in\nat_0$, $i=1, \dots,
n$, is denoted by $\nat_0^n$, and we shall use the common notation
$
\uD^\a f=\partial_1^{\a_1}...\partial_n^{\a_n}f =\frac{\partial^{|\a|}f}{(\partial x^1)^{\a_1} \cdots (\partial x^n)^{\a_n}},$ where $f$ is a function on $\mR^n$. 
As usual, let $|\a|=\a_1 + \cdots + \a_n$ be the order of the derivative $\uD^\a f$. 
Moreover, we put $x^\a=(x^1)^{\a_1} \cdots (x^n)^{\a_n}$.\\

For a real number $a$, let $a_+:=\max(a,0)$, and let $[a]$ denote its
integer part. For $p\in (0,\infty]$, the number $p'$ is defined by
$1/p':=(1-1/p)_+$ with the convention that $1/\infty=0$. All unimportant positive constants will be denoted by $c$, occasionally with
subscripts.  For two non-negative expressions ({{\it i.e.},
functions or functionals) ${\mathcal  A}$, ${\mathcal  B}$, the
symbol ${\mathcal A}\lesssim {\mathcal  B}$ (or ${\mathcal A}\gtrsim
{\mathcal  B}$) means that $ {\mathcal A}\leq c\, {\mathcal  B}$ (or
$c\,{\mathcal A}\geq {\mathcal B}$) for a suitable constant $c$. If ${\mathcal  A}\lesssim
{\mathcal  B}$ and ${\mathcal A}\gtrsim{\mathcal  B}$, we write
${\mathcal  A}\sim {\mathcal B}$ and say that ${\mathcal  A}$ and
${\mathcal  B}$ are equivalent. Given two (quasi-) Banach spaces $X$ and $Y$, we write $X\hookrightarrow Y$
if $X\subset Y$ and the natural embedding of $X$ into $Y$ is continuous.\\ 

\paragraph{\bf Function spaces on $\rn$.} $L_p(\rn)$, with $0<p\leq\infty$, stands for the usual quasi-Banach space with respect to the Lebesgue measure, quasi-normed by
\[
\Vert f \Vert_{L_p(\rn)}:=\left(\int_{\rn}|f(x)|^p\mathrm{d} x\right)^{\frac 1p}
\]
with the usual modification if $p=\infty$. For $p\geq 1$, $L_p(\mR^n)$ is even a Banach space. Let $\mathcal{D}(\rn)$} denote the space of smooth functions with compact support, and let $\mathcal{D}'(\mR^n)$ denote the corresponding distribution space.  
By $\mathcal{S}(\rn)$ we denote the Schwartz
space of all complex-valued rapidly
decreasing infinitely differentiable functions on $\rn$ and by $\mathcal{S}'(\rn)$ the dual space of all tempered distributions on $\rn$. For a rigorous definition of the Schwartz space and 'rapidly decreasing' we refer to \cite[Section 1.2.1]{T-F1}. 
For $f\in \mathcal{S}'(\rn)$ we denote by
$\widehat{f}$  the Fourier transform of $f$ and by  $f^{\vee}$   the
inverse Fourier transform of $f$. \\
Let  $s\in \real$ and $1<p<\infty$. Then
the  $(${\em fractional}$)$ {\em Sobolev space}  $H^s_p(\rn)$  contains all $f\in \mathcal{S}'(\rn)$ with \[\big((1+|\xi|^2)^{s/2}\widehat{f}\; \big)^\vee\in L_p(\rn),\qquad \xi\in \rn,\]
cf. \cite[Section 1.3.2]{Tri92}.
In particular, for $k\in \mathbb{N}_0$, these spaces coincide with  the {\em classical Sobolev spaces} $W^k_p(\rn)$, 
\[H^k_{p}(\rn)=W^k_{p}(\rn),\qquad \text{i.e.,}\qquad H^0_{p}(\rn)=L_p(\rn),\]
usually normed by
$$\Vert f\Vert_{W^k_p(\rn)}=\left(\sum_{|\a|\leq k}\Vert\mathrm{D}^{\a}f\Vert_{L_p(\rn)}^p\right)^{1/p}.$$

Furthermore, Besov spaces $B^s_{p,p}(\rn)$ can be defined via interpolation of Sobolev spaces. 
In particular, let  $(\cdot , \cdot )_{\Theta, p}$ stand for the real interpolation method, cf. \cite[Section 1.6.2]{Tri92}.  Then  for $s_0,s_1\in \real$,  $1<p<\infty$, and $0<\Theta<1$, we put 
\ \mbox{$
B^{s}_{p,p}(\rn):=\left(H^{s_0}_p(\rn), H^{s_1}_p(\rn)\right)_{\Theta,p},
$}\ 
where $s=\Theta s_0+(1-\Theta)s_1$. Note that $B_{p,p}^s(\rn)$ does not depend on the choice of $s_0,s_1, \Theta$.

The following lemma about pointwise multipliers and diffeomorphisms may be found in  \cite[Sections~4.2,4.3]{Tri92}, where it was proven in a more general setting.

\begin{lem}\label{Sob_Rn} Let  $s\in \mathbb{R}$ and  $1<p<\infty$. 

\begin{itemize}
 \item[(i)] Let $f\in H_p^s(\R^n)$ and $\phi$ a smooth function on $\R^n$ such that for all $\a$ with $|\a|\leq {[s]}+1$  we have $|\uD^\a \phi|\leq C_{|\a|}$. 
 Then there is a constant $C$ only depending on $s,p,n$ and $C_{|\a|}$ such that
\[ \Vert \phi f\Vert_{H_p^s(\R^n)} \leq C \Vert f\Vert_{H_p^s(\R^n)}.\]
\item[(ii)] Let $f\in H_p^s(\mR^n)$ with $\supp\, f\subset U\subset \R^n$ for $U$ open and let  $\kappa: V\subset \R^n\to U\subset \R^n$ be a diffeomorphism such that for all $\a$ with $|\a|\leq [s]+1$  we have $|\uD^\a \kappa|\leq C_{|\a|}$.Then there is a constant $C$ only depending on $s,p,n$ and $C_{|\a|}$ such that 
\[ \Vert f\circ \kappa \Vert_{H_p^s(\R^n)} \leq C \Vert f\Vert_{H_p^s(\R^n)}.\]
 \end{itemize}
\end{lem}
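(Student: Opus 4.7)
The plan is to reduce both claims to the case of non-negative integer smoothness $s=k\in\nat_0$, where $H_p^k(\rn)$ coincides with the classical Sobolev space $W_p^k(\rn)$, and to then extend to all real $s$ by complex interpolation for positive $s$ and by duality for negative $s$. Since the smoothness threshold $[s]+1$ exceeds $s$ by at most one, a single integer above $s$ is always enough to carry out the interpolation step without losing regularity on $\phi$ or $\kappa$.

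For part~(i) in the integer case, the Leibniz rule writes $\uD^\alpha(\phi f)$ as a finite combination of products $\uD^\beta\phi\cdot\uD^{\alpha-\beta}f$ with $|\beta|\leq|\alpha|\leq k$, each of which is controlled in $L_p$ by $C_{|\beta|}\,\|f\|_{W_p^k(\rn)}$. Summing over $|\alpha|\leq k$ gives the desired inequality with a constant depending only on $n,k,p$ and the $C_{|\beta|}$. For general real $s>0$ I would invoke the identification $[H_p^{k_0},H_p^{k_1}]_\theta=H_p^s(\rn)$ with integers $k_0<s<k_1\leq[s]+1$, and for $s<0$ the duality $(H_p^s(\rn))^{\ast}=H_{p'}^{-s}(\rn)$, under which multiplication by $\phi$ is the adjoint of multiplication by $\overline{\phi}$ (which satisfies the same derivative bounds).

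For part~(ii) with $s=k$, Fa\`a di Bruno's formula expresses $\uD^\alpha(f\circ\kappa)$ as a sum of terms $(\uD^\beta f)\circ\kappa$ multiplied by polynomials in the derivatives $\uD^\gamma\kappa^i$ with $|\beta|,|\gamma|\leq|\alpha|\leq k$. The hypothesis bounds these polynomials uniformly, while the change of variables $y=\kappa(x)$ converts $\|(\uD^\beta f)\circ\kappa\|_{L_p(V)}$ into $\|\uD^\beta f\|_{L_p(U)}$ up to a factor of the Jacobian $|\det D\kappa|$. Using the support condition $\supp f\subset U$ to legitimately extend by zero, one then sums and concludes in the integer case, and complex interpolation together with duality handle the remaining values of $s\in\real$.

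The main obstacle lies in pinning down the quantitative dependence of the constant on the $C_{|\alpha|}$ alone in part~(ii): the change of variables argument needs $|\det D\kappa|$ to be bounded not only from above but also away from zero, and the duality step needs analogous bounds for $\kappa^{-1}$. These are implicit in ``diffeomorphism with bounded derivatives'' but must be tracked carefully so that the final constant genuinely depends only on the stated data. A cleaner alternative, avoiding chain-rule bookkeeping, is to work directly with the Littlewood-Paley characterization of $H_p^s(\rn)$ and exploit paraproduct and commutator estimates; this automatically exhibits $[s]+1$ as the natural number of derivatives of $\phi$ respectively $\kappa$ needed to bound the relevant Fourier multipliers.
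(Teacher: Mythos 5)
The paper offers no proof of this lemma at all: it is quoted from \cite[Sections~4.2, 4.3]{Tri92}, where the pointwise multiplier and diffeomorphism theorems are proved by Fourier-analytic means (local means and maximal function estimates) for the whole scale $F^s_{p,q}(\rn)$. Your route --- Leibniz respectively Fa\`a di Bruno at integer smoothness, complex interpolation for fractional $s>0$, duality for $s<0$ --- is a legitimate classical alternative that works precisely because the spaces $H^s_p(\rn)$, $1<p<\infty$, form a complex interpolation scale; it is more elementary, but it is tied to the Bessel-potential case, whereas Triebel's argument covers all $F^s_{p,q}$ and $B^s_{p,q}$, which the paper needs again in Section \ref{sec_TL}. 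The choice $k_1=[s]+1$ as upper interpolation endpoint is exactly consistent with the stated hypothesis, so the quantitative dependence of the constant in part (i), $s>0$, comes out as claimed.

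The two difficulties you flag are genuine, but they are defects of the \emph{statement} rather than of your argument. First, in (ii) an upper bound on $|\uD^\a\kappa|$ alone does not control $\Vert f\circ\kappa\Vert_{L_p}$: already for $s=0$ the change of variables produces the factor $|\det D\kappa|^{-1}$, so one must additionally assume $|\det D\kappa|\geq c>0$, equivalently uniform bounds on the derivatives of $\kappa^{-1}$. This is what the paper in fact has available wherever the lemma is invoked --- condition (B1) bounds $\mu_{\alpha\beta}$ and $\mu_{\beta\alpha}$ simultaneously, and the cited theorem in \cite{Tri92} assumes bounds on both $\kappa$ and $\kappa^{-1}$. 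Second, for negative $s$ the hypothesis $|\a|\leq[s]+1$ supplies essentially no derivatives, while your duality step (correctly) requires control of $\phi$, respectively of $\kappa^{-1}$ and $|\det D\kappa^{-1}|$, up to order $[-s]+1$; multiplication by a merely bounded function is certainly not bounded on $H^s_p$ for $s<0$. Again, in all applications in the paper the charts, their inverses and the cut-offs have all derivatives uniformly bounded, so nothing is lost. With the hypotheses so repaired, your outline closes; only the bookkeeping in the Fa\`a di Bruno and adjoint computations remains to be written out.
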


\paragraph{\bf Vector-valued function spaces on $\mR^n$.}

Let $\mathcal{D}(\mR^n, \mF^r)$ be the space of compactly supported smooth functions on $\mR^n$ with values in $\mF^r$ where $\mF$ stands for $\mR$ or $\mC$ and $r\in \mN$ . Let $\mathcal{D}'(\mR^n, \mF^r)$ denote the corresponding distribution space. Then, $H^s_p(\mR^n, \mF^r)$ is defined in correspondence with $H^s_p(\mR^n)$  from above, cf. [Triebel, Fractals and spectra, Section~15]. Moreover, Besov spaces $B^s_{p,p}(\rn, \mF^r)$ are defined as the spaces $B_{p,p}^s(\mR^n)$ from above;
$B^{s}_{p,p}(\rn, \mF^r):=\left(H^{s_0}_p(\rn, \mF^r), H^{s_1}_p(\rn, \mF^r)\right)_{\Theta,p}$ where $(\cdot, \cdot)_{\Theta,p}$ again denotes the real interpolation method with  $s_0,s_1\in \real$, $1<p<\infty$, and $0<\Theta<1$ with $s=\Theta s_0+(1-\Theta)s_1$.

\begin{lem}\label{vec_norm_equ}
 The norms
$\Vert \phi \Vert_{H_p^s(\mR^n, \mF^r)}$  and $\left( \sum_{i=1}^r \Vert  \phi_i \Vert_{H_p^s(\mR^n, \mF)}^p \right)^\frac{1}{p}$
are equivalent where $\phi=(\phi_1, \ldots, \phi_r)\in H_p^s(\mR^n, \mF^r)$. The analogous statement is true for Besov spaces.
\end{lem}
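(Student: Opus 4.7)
The plan is to reduce everything to the scalar $L_p$ case by exploiting that $\mF^r$ is finite-dimensional and that the Fourier-analytic operations defining $H^s_p$ act componentwise. First, since all norms on $\mF^r$ are equivalent with constants depending only on $r$ and $p$, one has pointwise
\[
|\phi(x)| \sim \Bigl(\sum_{i=1}^r |\phi_i(x)|^p\Bigr)^{1/p},
\]
so integrating the $p$-th power over $\mR^n$ yields the claimed equivalence at the level of $L_p(\mR^n,\mF^r)$, namely $\Vert \phi \Vert_{L_p(\mR^n,\mF^r)}^p \sim \sum_{i=1}^r \Vert \phi_i \Vert_{L_p(\mR^n,\mF)}^p$.

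Next, the Fourier transform, its inverse, and multiplication by the scalar symbol $(1+|\xi|^2)^{s/2}$ all act componentwise on $\mF^r$-valued tempered distributions, hence
\[
\bigl((1+|\xi|^2)^{s/2}\widehat \phi \bigr)^\vee = \Bigl(\bigl((1+|\xi|^2)^{s/2}\widehat{\phi_i}\bigr)^\vee\Bigr)_{i=1}^r .
\]
Applying the $L_p$-equivalence from the previous paragraph to this $\mF^r$-valued function immediately gives
\[
\Vert \phi\Vert_{H^s_p(\mR^n,\mF^r)}^p \sim \sum_{i=1}^r \Vert \phi_i\Vert_{H^s_p(\mR^n,\mF)}^p,
\]
which is the Sobolev part of the lemma.

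For the Besov case I would invoke that real interpolation commutes with finite $\ell^p$-direct sums, a standard consequence of the $K$-functional definition: if $X_0^r, X_1^r$ are equipped with the $\ell^p$-sum norms, then $(X_0^r, X_1^r)_{\Theta,p} = \bigl( (X_0,X_1)_{\Theta,p}\bigr)^r$ with equivalent norms. By the Sobolev case just established, $H^{s_j}_p(\mR^n,\mF^r)$ is isomorphic to $H^{s_j}_p(\mR^n,\mF)^r$ ($j=0,1$) with $\ell^p$-sum norms, and interpolating yields $B^s_{p,p}(\mR^n,\mF^r) \cong B^s_{p,p}(\mR^n,\mF)^r$, as desired. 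There is no essential obstacle here; the whole argument is a routine combination of finite-dimensional norm equivalence with the fact that $\mF^r$-valued scalar operators act coordinatewise.
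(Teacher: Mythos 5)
Your proof is correct, and for the Sobolev half it is exactly what the paper means when it says the equivalence ``follows immediately from the definition'': the Bessel potential $\phi\mapsto\bigl((1+|\xi|^2)^{s/2}\widehat{\phi}\,\bigr)^\vee$ acts componentwise, and the finite-dimensional equivalence of the Euclidean and $\ell^p$ (quasi-)norms on $\mF^r$ reduces everything to the scalar $L_p$ level. The only place you genuinely diverge is the Besov case: the paper simply cites an external result (\cite[Lemma 26]{Gro}), whereas you give a self-contained argument by observing that real interpolation commutes with finite $\ell^p$-direct sums (via the obvious $K$-functional estimate) and then interpolating the already-established Sobolev identifications $H^{s_j}_p(\mR^n,\mF^r)\cong H^{s_j}_p(\mR^n,\mF)^r$. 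Since the paper defines $B^s_{p,p}(\mR^n,\mF^r)$ precisely as $\bigl(H^{s_0}_p(\mR^n,\mF^r),H^{s_1}_p(\mR^n,\mF^r)\bigr)_{\Theta,p}$, your route is arguably the more natural one in this context and keeps the paper's argument independent of the cited lemma; the trade-off is only that the citation spares the reader the (routine) verification of the interpolation-commutes-with-direct-sums step.
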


\begin{proof} The equivalence for Sobolev spaces follows immediately from their definition. The corresponding result for  Besov spaces can be found in \cite[Lemma 26]{Gro}.
\end{proof}

\paragraph{\bf Notations concerning manifolds.} Before starting  we want to make the following warning or excuse: For a differential geometer the notations may seem a little overloaded at first  glance. Usually, when interested in equivalent norms, one merely suppresses diffeomorphisms as transition functions. This provides no problem when it is clear that all constants appearing  are uniformly bounded --  which is obvious for finitely many bounded charts (on closed manifolds) and also known for manifolds of bounded geometry with geodesic normal coordinates. But here we work in a more general context where the aim is to find out which conditions the coordinates have to satisfy in order to  ignore those diffeomorphisms in the sequel. This is precisely  why we try to be more explicit in our notation.\\

Let $(M^n, g)$ be an $n$-dimensional complete manifold with Riemannian metric $g$. 
We denote the volume element on $M$ with respect to the metric $g$ by $\vo_g$. For $1<p<\infty$ the $L_p$-norm of a compactly supported smooth function $v\in \mathcal{D}(M)$ is given by $\Vert v\Vert_{L_p(M)}=\left( \int_M |v|^p\vo_g\right)^\frac{1}{p}$. The set $L_p(M)$ is then the completion of $\mathcal{D}(M)$ with respect to the $L_p$-norm. The space of distributions on $M$ is denoted by $\mathcal{D}'(M)$.

A cover $(U_\alpha)_{\alpha \in I}$ of $M$ is a collection of open subsets of $U_\alpha\subset M$ where $\alpha$ runs over an index set $I$.  The cover is called locally finite if each $U_\alpha$ is intersected by at most finitely many $U_\beta$.  The cover is called uniformly locally finite if there exists a constant $L>0$ such that each $U_\alpha$ is intersected by at most $L$ sets $U_\beta$. \\
A chart on $U_{\alpha}$ is given by local coordinates -- a diffeomorphism  $\kappa_\alpha: x=(x^1, \ldots, x^n) \in V_\alpha \subset \mathbb{R}^n \to \kappa_\alpha(x)\in U_\alpha$. We will always assume our charts to be smooth. A collection $\uA=(U_\alpha, \kappa_\alpha)_{\alpha\in I}$ is called an atlas of $M$. 

Moreover, a collection of smooth functions $(h_\alpha)_{\alpha\in I}$ on $M$ with 
$$ \supp\ h_{\alpha}\subset U_{\alpha},\qquad  0\leq h_\alpha \leq 1\qquad  \text{and} \qquad  \sum_\alpha h_\alpha=1 \quad \text{on }M. 
 $$ 
 is called a partition of unity subordinated to the cover $(U_\alpha)_{\alpha\in I}$.
 The triple $\uT:=(U_{\alpha}, \kappa_{\alpha},h_{\alpha})_{\alpha \in I}$ is called a trivialization of the manifold $M$.\\

Using the standard Euclidean coordinates $x=(x^1, \ldots, x^n)$ on $V_\alpha\subset \mR^n$, we introduce an orthonormal frame $(e_i^{\alpha})_{1\leq i\leq n}$ on $TU_\alpha$ by $e_i^\alpha:=(\kappa_\alpha)_*(\partial_{i})$. In case we talk about a fixed chart we will often leave out the superscript $\alpha$. Then, in those local coordinates the metric $g$ is expressed via the matrix coefficients $g_{ij}(=g_{ij}^\alpha): V_\alpha \to \mathbb{R}$ defined by $g_{ij}\circ \kappa_\alpha^{-1}=g(e_i,e_j)$  and the corresponding Christoffel symbols $\Gamma_{ij}^k=\!\! ({\phantom{!}}^{\alpha\mkern-1mu}\Gamma_{ij}^k): V_\alpha \to \mR$ are  defined by $\nabla^M_{e_i}e_j =(\Gamma_{ij}^k\circ \kappa_\alpha^{-1}) e_k$ where $\nabla^M$ denotes the Levi-Civita connection of $(M,g)$. In local coordinates, 
\begin{equation} \Gamma_{ij}^k=\frac{1}{2} g^{kl}(\partial_j g_{il}+\partial_i g_{jl} -\partial_l g_{ij}) \label{Christ_coord}
\end{equation}
where $g^{ij}$ is the inverse matrix of $g_{ij}$. If $\alpha,\beta\in I$ with $U_\alpha\cap U_\beta\neq \varnothing$, we define the transition function $\mu_{\alpha\beta}=\kappa_\beta^{-1}\circ \kappa_\alpha: \kappa_\alpha^{-1}(U_\alpha\cap U_\beta) \to \kappa_\beta^{-1}(U_\alpha\cap U_\beta)$. Then, 
\begin{equation}\label{trans_g} g_{ij}^{\alpha}(x)= \partial_i \mu_{\alpha\beta}^k(x)\partial_j \mu_{\alpha\beta}^l(x) g_{kl}^\beta(\mu_{\alpha\beta}(x)).\end{equation}

\begin{ex}[{\bf Geodesic normal coordinates}]\label{geod_coord_triv}
Let $(M^n,g)$ be a complete Riemannian manifold. Fix $z\in M$ and let $r>0$ be smaller than the injectivity radius of $M$. For $v\in T_z^{\leq r}M:=\{ w\in T_zM\ |\ g_z(w,w)\leq r^2\}$, we denote by $c_v: [-1,1] \to M$ the unique geodesic with $c_v(0)=z$ and $\dot{c}_v(0)=v$. Then, the exponential map $\exp^M_z: T_z^{\leq r}M \to M$ is a diffeomorphism defined by
$\exp^M_z(v):= c_{v}(1)$. Let $S=\{p_\alpha\}_{\alpha\in I}$ be a set of points in $M$ such that $(U^{\rm geo}_\alpha:= B_r(p_\alpha))_{\alpha\in I}$ covers $M$.  For each $p_\alpha$ we choose an orthonormal frame of $T_{p_\alpha}M$ and call the resulting identification $\lambda_\alpha: \mR^n\to T_{p_\alpha} M$. Then, $\uA^{\rm geo}= (U^{\rm geo}_\alpha, \kappa^{\rm geo}_\alpha=\exp_{p_\alpha}^M\circ \lambda_\alpha: V_\alpha^{\rm geo}:=B_r^n \to U_\alpha^{\rm geo})_{\alpha \in I}$ is an atlas of $M$ -- called geodesic atlas. { (Note that $\lambda_\alpha^{-1}$ equals the tangent map ${(d \kappa_\alpha^{\rm geo})}^{-1}$ at $p_\alpha$.)}
\end{ex}

\paragraph{\bf Notations concerning vector bundles.}

Let $E$ be a hermitian or Riemannian vector bundle over a Riemannian manifold $(M^n,g)$ of rank $r$ with fiber product $\<.,.\>_E$ and  connection $\nabla^E: \Gamma(TM)\otimes \Gamma(E)\to \Gamma(E)$. Here $\Gamma$ always denotes the space of smooth sections of the corresponding vector bundle.
We set $\mF=\mR$ if $E$ is a Riemannian vector bundle and $\mF=\mC$ if $E$ is hermitian.

Let $\uA=(U_\alpha, \kappa_\alpha: V_\alpha \to U_\alpha)_{\alpha \in I}$ be an atlas of $(M,g)$ and 
let $\zeta_\alpha: U_\alpha \times \mF^r \to E|_{U_\alpha}$ be local trivializations of $E$. Note that here 'trivialization' has the usual meaning in connection with the ordinary definition of a vector bundle. We apologize that in lack of a better notion we also call $\uT$  a trivialization but hope there will be no danger of confusion.  We set $\xi_\alpha:= \zeta_\alpha \circ (\kappa_\alpha \times \Id) : V_\alpha \times \mF^r \to  E|_{U_\alpha}$.
We call $\uA_E=(U_\alpha, \kappa_\alpha, \xi_\alpha)_{\alpha \in I}$ an atlas of $E$. In case we already start with a trivialization $\uT=(U_\alpha, \kappa_\alpha, h_\alpha)_{\alpha \in I}$ on $M$, $\uT_E=(U_\alpha, \kappa_\alpha, \xi_\alpha, h_\alpha)_{\alpha \in I}$ is called a trivialization of $E$. 

Let $y=(y^1,\ldots, y^r)$ be standard coordinates on $\mF^r$ and let $\left(\partial_\rho:=\frac{\partial}{\partial y^\rho}\right)_{1\leq \rho\leq r}$ be the corresponding local frame. Then, $\tilde{e}_\rho(p)(=\tilde{e}_\rho^\alpha(p)):=\xi_\alpha \left(\kappa_\alpha^{-1}(p),\partial_\rho \right)$ form a local frame of $E_{p}$ for $p\in U_\alpha$. As before, we suppress $\alpha$ in the notation if we talk about a fixed chart. In those local coordinates, the fiber product is represented by $h_{\rho\sigma}:=\langle \tilde{e}_\rho, \tilde{e}_\sigma\rangle_E\circ \kappa_\alpha: V_\alpha \to \mF$. Hence, if  $\phi,\psi\in \Gamma(E|_{U_\alpha})$ we have for $\phi=\phi^\rho \tilde{e}_\rho$ and $\psi=\psi^\sigma \tilde{e}_\sigma$ that
\[ \langle \phi,\psi\rangle_E= (h_{\rho\sigma} \circ \kappa_\alpha^{-1}) \phi^\rho \bar{\psi}^\sigma,\]
where $\bar{a}$ denotes the complex conjugate of $a$.
Let  Christoffel symbols $\tilde{\Gamma}_{i\rho}^\sigma: U_\alpha \to \mF$ for $E$ be defined by $\nabla^E_{e_i} \tilde{e}_\rho =\left(\tilde{\Gamma}_{i\rho}^\sigma \circ \kappa_\alpha^{-1}\right) \tilde{e}_\sigma$, where $e_i=(\kappa_\alpha)_*\partial_i$.
If the connection $\nabla^E$ is metric, i.e., $e_i\< \tilde{e}_\sigma,\tilde{e}_\rho\>_E=\< \nabla^E_{e_i}\tilde{e}_\sigma,\tilde{e}_\rho\>_E+\< \tilde{e}_\sigma,\nabla_{e_i} \tilde{e}_\rho\>_E$, we get
\begin{equation}\label{h_ode} 
\partial_i h_{\sigma\tau} =  \Gamma_{i\sigma}^\rho h_{\tau\rho} + \Gamma_{i\tau}^\rho h_{\rho\sigma}. \end{equation}

For all $\alpha, \beta\in I$ with $U_\alpha \cap U_\beta\neq \varnothing$, transition functions $\tilde{\mu}_{\alpha\beta}: \kappa_\alpha^{-1}(U_\alpha\cap U_\beta) \to \text{GL}(r,\mF)$ are defined by $\xi_\beta^{-1}\circ \xi_\alpha (x,u)= (\mu_{\alpha\beta}(x), \tilde{\mu}_{\alpha\beta} (x) \cdot u)$. 
Here, $\text{GL}(r,\mF)$ denotes the general linear group of $\mF$-valued $r\times r$ matrices. \\

\paragraph{\bf Flows.}

Let $x'(t)=F(t,x(t))$ be a system of ordinary differential equations with $t\in\mR$, $x(t)\in \mR^n$ and $F\in C^\infty (\mR\times \mR^n, \mR^n)$. Let the solution of the initial value  problem $x'(t)=F(t,x(t))$ with $x(0)=x_0\in \mR^n$ be denoted by $x_{x_0}(t)$ and exist for $0\leq t\leq t_0(x_0)$. Then, the flow $\Phi: {\rm dom}\subset  \mR\times \mR^n\to \mR^n$ with ${\rm dom}\subset \{ (t,x)\ |\ 0\leq t\leq t_0(x)\}$ is defined by  $\Phi (t,x_0)=x_{x_0}(t)$.
Higher order ODE's $x^{(d)}(t)=F(t,x(t),\ldots, x^{(d-1)}(t))$ can be transferred back to first order systems by introducing auxiliary variables. The corresponding flow then obviously depends not only on $x_0=x(0)$ but the initial values $x(0), x'(0),\ldots, x^{(d-1)}(0)$: $\Phi(t, x(0), \ldots, x^{(d-1)}(0))$.

\begin{ex}[{\bf Geodesic flow}]\label{geo_flow} Let $(M^n,g)$ be a Riemannian manifold. Let $z\in M$, $v\in T_zM$. Let $\kappa: V\subset \mR^n\to U\subset M$ be a chart around $z$.  The corresponding coordinates on $V$ are denoted by  $x=(x^1,\ldots, x^n)$. We consider the geodesic equation in coordinates: $\ddot{x}^k=-\Gamma_{ij}^k {\dot{x}^i}{\dot{x}^j}$ with initial values $x(0)=\kappa^{-1}(z)\in \R^n$ and {$x'(0)=\kappa^*(v)(=d \kappa^{-1}(v))$.} Here $\Gamma_{ij}^k$ are the Christoffel symbols with respect to the coordinates given by $\kappa$. Let $x(t)$ be the unique solution and $\Phi(t, x(0), x'(0))$ denotes the corresponding flow. Then, $c_v(t)=\kappa(x(t))$ is the geodesic described in Example \ref{geod_coord_triv} and { $\exp^M_z(v)=\kappa\circ \Phi(1,\kappa^{-1}(z), \kappa^*(v))$. }
\end{ex}

\begin{lem}\cite[Lemma 3.4 and Corollary 3.5]{Schick01}\label{flow_lem}
 Let $x'(t)=F(t,x(t))$ be a system of ordinary differential equations as above. Suppose that $\Phi(t,x)$ is the flow of this equation. Then there is a universal expression ${\rm Expr}_\a$ only depending on the multi-index $\a$ such that  
 \[ |\uD^\a_x \Phi(t,x_0)|\leq {\rm Expr}_\a \left(\sup_{0\leq \tau\leq t} \left\{\left|D^{\a'}_x F(\tau, \Phi(\tau, x_0))\right|\right\} \ \Big|\ \a'\leq \a,\ t\right)\] for all $t\geq 0$ where $\Phi(t, x_0)$ is defined.
 Moreover, a corresponding statement holds for ordinary differential equations of order $d$.
\end{lem}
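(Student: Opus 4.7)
The plan is to prove the lemma by induction on $|\a|$, using the Volterra integral form of the flow and applying Grönwall's inequality at each stage.

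First, I would rewrite the flow equation in integral form as
\[
\Phi(t,x_0)=x_0+\int_0^t F\bigl(\tau,\Phi(\tau,x_0)\bigr)\,\d\tau,
\]
valid on the interval of existence. The base case $\a=0$ is trivial, since $|\Phi(t,x_0)|$ is controlled through $F$ by the integral equation itself (and in fact the statement for $\a=0$ only asks for a function of $\sup_\tau|F(\tau,\Phi(\tau,x_0))|$ and $t$, which is immediate). For $|\a|=1$, differentiating in $x_0$ yields the linear variational equation
\[
\uD_x^\a \Phi(t,x_0)=e_\a+\int_0^t \uD_xF\bigl(\tau,\Phi(\tau,x_0)\bigr)\cdot \uD_x^\a\Phi(\tau,x_0)\,\d\tau,
\]
so Grönwall's inequality gives $|\uD_x^\a\Phi(t,x_0)|\leq \exp\bigl(t\cdot\sup_{0\leq\tau\leq t}|\uD_xF(\tau,\Phi(\tau,x_0))|\bigr)$, which has the claimed form.

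For the inductive step, I would assume bounds of the prescribed type for all $\a'$ with $|\a'|<|\a|$ and differentiate the integral equation $|\a|$ times in $x_0$. Using the Faà di Bruno formula to expand $\uD_x^\a\bigl[F(\tau,\Phi(\tau,x_0))\bigr]$, one obtains
\[
\uD_x^\a\Phi(t,x_0)=\int_0^t \uD_xF\bigl(\tau,\Phi(\tau,x_0)\bigr)\cdot\uD_x^\a\Phi(\tau,x_0)\,\d\tau+R_\a(t,x_0),
\]
where $R_\a$ is a polynomial expression in the quantities $\uD_x^{\a''}F(\tau,\Phi(\tau,x_0))$ with $|\a''|\leq|\a|$ and the lower-order derivatives $\uD_x^{\a'}\Phi(\tau,x_0)$ with $|\a'|<|\a|$. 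By the induction hypothesis, $R_\a$ is bounded by a universal expression $\mathrm{Expr}_\a^{(1)}$ in the sup-norms of $\uD_x^{\a'}F(\tau,\Phi(\tau,x_0))$ with $\a'\leq\a$, and $t$. A second application of Grönwall's inequality to the integral equation above then yields the desired bound with an explicit $\mathrm{Expr}_\a$, completing the induction.

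The extension to an ODE of order $d$ follows by the standard reduction: introducing $y=(x,x',\ldots,x^{(d-1)})$ transforms the equation into a first-order system $y'=\tilde F(t,y)$, where derivatives of $\tilde F$ in the $y$-variable are expressible in terms of derivatives of $F$ in its various arguments. Applying the first part to $\tilde\Phi$ and reading off the first block $\Phi$ gives the claim. The main obstacle in the argument is bookkeeping: one must verify that the combinatorial expressions produced by Faà di Bruno and the successive Grönwall applications really do organize into a single universal function $\mathrm{Expr}_\a$ depending only on $\a$, with the precise monotone dependence on $\sup_{0\leq\tau\leq t}|\uD_x^{\a'}F(\tau,\Phi(\tau,x_0))|$ for $\a'\leq\a$ and on $t$, rather than on pointwise values of $F$ away from the trajectory.
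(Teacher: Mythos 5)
The paper offers no proof of this lemma at all --- it is imported verbatim from \cite[Lemma 3.4 and Corollary 3.5]{Schick01} --- and your argument (Volterra integral form, variational equation, induction on $|\a|$ via Fa\`a di Bruno, Gr\"onwall at each stage, reduction of order-$d$ equations to first-order systems) is exactly the standard proof given in that reference, so it is correct and matches the intended route. One caveat: the case $\a=0$ is not ``immediate'' as you claim --- for $F\equiv 0$ one has $\Phi(t,x_0)=x_0$, which is unbounded in $x_0$ while the right-hand side of the asserted estimate would be a constant --- so the inequality must be read as a statement for $|\a|\geq 1$ (which is how Schick states it and how this paper uses it); your induction genuinely begins at $|\a|=1$, whose variational equation involves only $\uD_x F$ along the trajectory and not $\Phi$ itself, and is therefore unaffected.
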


\section{Sobolev spaces on manifolds of bounded geometry}

From now on let $M$ always be an $n$-dimensional  manifold with Riemannian metric $g$.

\begin{defi}\label{defi_bdgeom}\cite[Definition A.1.1]{Shu}
A Riemannian manifold $(M^n,g)$ is of bounded geometry if the following two conditions are satisfied:
\begin{itemize}
\item[(i)] The injectivity radius $r_{M}$  of $(M,g)$ is positive.
\item[(ii)] Every covariant derivative of the Riemann curvature tensor $R^M$ of $M$ is bounded, i.e., for all $k\in \mN_0$ there is a constant $C_k>0$ such that $|(\nabla^M)^kR^M|_g\leq C_k$.
\end{itemize}
\end{defi}

\begin{rem}\label{rem_bdgeom}{\bf i)} Note that Definition \ref{defi_bdgeom}(i) implies that $M$ is complete, cf. \cite[Proposition 1.2a]{Eichb}.\\
{\bf ii)}\cite[Definition A.1.1 and below]{Shu} Property (ii) of Definition \ref{defi_bdgeom} can be replaced by the following equivalent property which will be more convenient later on: Consider a geodesic atlas $\uA^{\rm geo}=(U^{\rm geo}_{\alpha}, \kappa^{\rm geo}_{\alpha})_{\alpha \in I}$ as in Example \ref{geod_coord_triv}. For  all $k\in \mN$ there are constants $C_k$ such that for all  $\alpha,\beta\in I$ with $U^{\rm geo}_{\alpha}\cap U^{\rm geo}_{\beta}\neq \varnothing$ we have for the corresponding transition functions $\mu_{\alpha\beta}:=(\kappa^{\rm geo}_{\beta})^{-1}\circ \kappa^{\rm geo}_{\alpha}$ that 

\begin{equation*}
|\mathrm{D}^{\a} \mu_{\alpha\beta}| \leq C_{k},\qquad \text{for all $\a \in \nat^n_0$ with $|\a|\leq k$ and all charts.}
\end{equation*}

{\bf iii)} \cite[Theorem A and below]{Eich}  Consider a geodesic atlas $\uA^{\rm geo}$ as above. Let $g_{ij}$ denote the metric in these coordinates and $g^{ij}$ its inverse. Then, property (ii) of Definition \ref{defi_bdgeom} can be replaced by the following equivalent property: For all $k\in \mN_0$ there is a constant $C_k$ such that

\begin{equation}\label{comp-coord}
|\mathrm{D}^{\a} g_{ij}| \leq C_{k},\ |\mathrm{D}^{\a} g^{ij}| \leq C_{k}, \qquad \text{for all $\a\in \mN_0^n$ with $|\a|\leq k$. }
\end{equation}
\end{rem}

\begin{ex}[\bf Geodesic trivialization]\label{geod_triv}
 Let $(M,g)$ be {of bounded geometry (this includes the case of closed manifolds)}. Then, there exists a geodesic atlas, see Example \ref{geod_coord_triv}, that is uniformly locally finite: Let $S$ be a maximal set of points $\{p_\alpha\}_{\alpha \in I}\subset M$ such that the metric balls $B_{\frac{r}{2}}(p_\alpha)$ are pairwise disjoint. Then, the balls $\{B_r(p_\alpha)\}_{\alpha \in I}$ cover $M$, and  we obtain a (uniformly locally finite) geodesic atlas $\uA^{\rm geo}=(U^{\rm geo}_\alpha:= B_r(p_\alpha), \kappa^{\rm geo}_\alpha)_{\alpha\in I})$. {For an argument concerning the uniform local finiteness of the cover we refer to Remark \ref{rem_FC}.ii.}
Moreover, there is a partition of unity $h^{\rm geo}_\alpha$ subordinated to $(U^{\rm geo}_\alpha)_{\alpha\in I}$ such that for all $k\in \mN_0$ there is a constant $C_k>0$ such that $|\uD^\a (h^{\rm geo}_\alpha\circ \kappa^{\rm geo}_\alpha)|\leq C_k$ for all multi-indices $\a$ with $|\a|\leq k$, cf. \cite[Proposition~7.2.1]{Tri92} and the references therein.
The resulting trivialization is denoted by $\uT^{\rm geo}=(U^{\rm geo}_\alpha, \kappa^{\rm geo}_\alpha, h^{\rm geo}_\alpha)_{\alpha \in I}$ and referred to as geodesic trivialization.
\end{ex}

\subsection{Sobolev norm on manifolds of bounded geometry using geodesic normal coordinates}\label{3.1}

On manifolds of bounded geometry it is  possible to define  spaces $H^s_p(M)$  using local descriptions (geodesic normal coordinates) and norms of corresponding spaces $H^s_p(\rn)$.\\

\begin{defi}\label{H-geo}
Let $(M^n,g)$  be a Riemannian manifold of bounded geometry with geodesic trivialization $\uT^{\rm geo}=(U^{\rm geo}_\alpha, \kappa^{\rm geo}_\alpha, h^{\rm geo}_\alpha)_{\alpha\in I}$  as above. Furthermore, let $s\in \mR$ and $1<p<\infty$. Then the space $H^{s}_p(M)$ contains all distributions $f\in \mathcal{D}'(M)$ such that 
\begin{equation}\label{geo_norm} \left( \sum_{\alpha\in I} \Vert (h^{\rm geo}_\alpha f)\circ \kappa^{\rm geo}_\alpha  \Vert_{H_p^s(\mR^n)}^p \right)^\frac{1}{p}\end{equation} is finite. Note that although $\kappa^{\rm geo}_\alpha$ is only defined on $V^{\rm geo}_\alpha\subset \mR^n$,  $(h^{\rm geo}_\alpha f)\circ \kappa^{\rm geo}_\alpha$ is viewed as a function on $\mR^n$ extended by zero, since $\supp\, (h^{\rm geo}_\alpha f)\subset U^{\rm geo}_\alpha$.
\end{defi}

\begin{rem} The spaces $H^s_p(M)$ generalize in a natural way the classical Sobolev spaces  $W^k_p(M)$,  $k\in \nat_0$, $1<p<\infty$, on Riemannian manifolds $M$: Let $
\Vert f\Vert_{W^k_p(M)}:=\sum_{l=0}^k \Vert \nabla^l f\Vert_{L_p(M)}, 
$ 
then $W^k_p(M)$ is the completion of $\mathcal{D}(M)$  in the  $W^k_p(M)$-norm,  cf. \cite{Aub1}, \cite{Aub2}. 
 As in the Euclidean case, on manifolds $M$ of bounded geometry one has the coincidence 
\beq\label{coinc}
W^k_p(M)=H^k_p(M), \qquad k\in \nat_0,\quad 1<p<\infty, 
\eeq
cf. \cite[Section~7.4.5]{Tri92}.\\
Alternatively, the fractional Sobolev spaces $H^s_p(M)$ on manifolds with bounded geometry can be characterized with the help of the Laplace-Beltrami operator, cf. \cite[Section 7.2.2 and Theorem 7.4.5]{Tri92}.  This approach was originally used by \cite{strich} and later on slightly modified  in \cite[Section  7.4.5]{Tri92} in the following way: 
 Let  $1<p<\infty$ and $\rho>0$. Let $s>0$, then $H^s_p(M)$ is the collection of all $f\in L_p(M)$ such that $f=(\rho\Id -\Delta)^{-s/2}h$ for some $h\in L_p(M)$, with the norm $\Vert f\Vert_{H^s_p(M)}=\Vert h\Vert_{L_p(M)}$. Let $s<0$, then $H^s_p(M)$ is the collection of all $f\in \mathcal{D}'(M)$ having the form $f=(\rho\Id-\Delta)^l h$ with $h\in {H}^{2l+s}_p(M)$, where $l\in \nat$ such that $2l+s>0$, and $\Vert f\Vert_{H^s_p(M)}=\Vert h\Vert_{{H}^{2l+s}_p(M)}$. Let $s=0$, then ${H}^0_p(M)=L_p(M)$. \\
In particular, the spaces $H^s_p(M)$ with $s<0$ are independent of the number $l$ appearing in their definition  in the sense of equivalent norms, cf. \cite[Definition 4.1]{strich}. The additional parameter $\rho>0$ used by {\sc Triebel} ensures that \eqref{coinc} also holds in this context as well. In particular, for $2\leq p<\infty$ one can choose $\rho=1$, cf. \cite[Rem. 1.4.5/1, p.~301]{Tri92}.  \\
Technically, it is possible to extend Definition \ref{H-geo} to the limiting cases when $p=1$ and $p=\infty$. However, already in the classical situation when $M=\rn$  the outcome is not satisfactory: the resulting spaces $H^s_p(\rn)$ have not enough Fourier multipliers, cf. \cite[p.~6, p.~13]{Tri92}, and there is no hope for a coincidence in the sense of \eqref{coinc}. Therefore, we restrict ourselves to $1<p<\infty$, but emphasize that the boundary cases are included in the outlook about $F$- and $B$-spaces in Section \ref{sec_TL}.
\end{rem}

\subsection{Sobolev norms on manifolds of bounded geometry using other trivializations}

For many applications the norm given in \eqref{geo_norm} is very useful. In particular, it enables us to  transfer many results known on $\mR^n$ to manifolds $M$ of bounded geometry. 
The choice of  geodesic coordinates, however, often turns out to be far too restrictive if one 
needs to adapt the underlying coordinates to a certain problem, e.g., to submanifolds $N$ of $M$ in order to study traces. Therefore,  in order to replace the geodesic trivializations in \eqref{geo_norm} we want to look for other \textit{'good'} trivializations which will result in equivalent norms (and hence yield the same spaces).

\begin{defi}\label{H-koord}
Let $(M^n,g)$ be a Riemannian manifold together with a uniformly locally finite trivialization $\uT=(U_\alpha, \kappa_{\alpha}, h_{\alpha})_{\alpha \in I}$. Furthermore, let $s\in \real$ and $1< p<\infty$.  Then the space $H^{s,\uT}_p(M)$ contains all distributions $f\in \mathcal{D}'(M)$ such that 
\begin{align*}
\Vert f\Vert_{H^{s,\uT}_{p}}:=\left(\sum_{\alpha\in I} \Vert (h_\alpha f)\circ \kappa_\alpha\Vert^p_{H^s_p(\mathbb{R}^n)}\right)^{\frac{1}{p}}
\end{align*}
is finite. Here again $(h_\alpha f)\circ \kappa_\alpha$ is viewed as function on $\R^n$, cf. \eqref{geo_norm} and below.  
\end{defi}
 
In general, the  spaces $H^{s,\uT}_p(M)$ do depend on the underlying trivialization $\uT$. One of our main aims will be to  investigate under which conditions on $\uT$ this norm is equivalent to the $H_p^s(M)$-norm. For that we will use the following terminology. 

\begin{defi}\label{bddcoord} Let $(M^n,g)$ be a Riemannian manifold of bounded geometry. Moreover, let a uniformly locally finite trivialization $\uT=(U_\alpha, \kappa_\alpha,h_\alpha)_{\alpha\in I}$ be given. We say that $\uT$ is admissible if the following conditions are fulfilled:
\begin{itemize}
 \item[(B1)] $\uA=(U_\alpha, \kappa_\alpha)_{\alpha\in I}$ is compatible with geodesic coordinates, i.e., for $\uA^{\rm geo}=(U^{\rm geo}_\beta, \kappa^{\rm geo}_\beta)_{\beta \in J}$ being a geodesic atlas of $M$ as in Example \ref{geod_coord_triv}
  there are constants $C_k>0$ for $k\in \mN_0$ such that for all $\alpha\in I$ and $\beta\in J$ with $U_{\alpha}\cap U^{\rm geo}_{\beta}\neq \varnothing$ and all $\a\in \mN_0^n$ with $|\a|\leq k$ 
  \[ |\uD^\a (\mu_{\alpha\beta}=(\kappa_\alpha)^{-1} \circ \kappa^{\rm geo}_\beta)|\leq C_k \qquad  \text{\ and\ } \qquad |\uD^\a (\mu_{\beta\alpha}=(\kappa^{\rm geo}_\beta)^{-1} \circ \kappa_\alpha)|\leq C_k.\]
\item[(B2)]  For all $k\in \mathbb{N}$ there exist $c_k>0$ such that for all $\alpha\in I$ and all multi-indices $\a$ with $|\a|\leq  k$ 
\[ |D^{\a}(h_\alpha\circ\kappa_\alpha)|\leq c_k.\]
\end{itemize}
\end{defi}

\begin{rem}\label{rem_comp_T} \hfill\\
{\bf i)} If (B1) is true for some geodesic atlas, it is true for any refined geodesic atlas. This follows immediately from Remark \ref{rem_bdgeom}.ii.\\
{\bf ii)} Condition (B1) implies in particular the compatibility of the charts in $\uT$ among themselves, i.e., for all $k\in \mN_0$ there are constants $C_k>0$ such that for all multi-indices $\a$ with $|\a|\leq k$ and all $\alpha,\beta\in I$ with $U_\alpha \cap U_\beta\neq \varnothing$ we have $|\uD^\a(\kappa_\alpha^{-1} \circ \kappa_\beta)|\leq C_k$. This is seen immediately when choosing $z \in U_\alpha\cap U_\beta$, considering the exponential map $\kappa_{z}^{\rm geo}$ around $z$,  applying the chain rule to $\uD^\a (\kappa_\alpha^{-1}\circ \kappa_\beta)=\uD^\a ((\kappa_\alpha^{-1}\circ \kappa_z^{\rm geo}) \circ ((\kappa_z^{\rm geo})^{-1}\circ \kappa_\beta))$. The same works for charts belonging to different admissible trivializations.
\end{rem}

\begin{thm}\label{indep_H} 
Let $(M,g)$ be a Riemannian manifold of bounded geometry, and  let $\uT=(U_{\alpha},\kappa_{\alpha}, h_\alpha)_{\alpha \in I}$ be an admissible trivialization of $M$. Furthermore, let $s\in \real$ and   ${1<p<\infty}$. Then,
\begin{align*}
H^{s,\uT}_{p}(M)=H^{s}_{p}(M), 
\end{align*}
i.e., for admissible trivializations of $M$ the resulting Sobolev spaces $H^{s,\uT}_{p}(M)$ do not depend on $\uT$.
\end{thm}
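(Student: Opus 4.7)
The plan is to establish $\Vert f\Vert_{H^{s,\uT}_p}\sim \Vert f\Vert_{H^s_p(M)}$ by a chart-by-chart comparison against the geodesic trivialization $\uT^{\rm geo}$ used in Definition \ref{H-geo}. By Remark \ref{rem_bdgeom}.ii and Example \ref{geod_triv}, $\uT^{\rm geo}$ is itself admissible in the sense of Definition \ref{bddcoord}, so it suffices to prove a one-sided inequality between two admissible trivializations; the reverse follows by symmetry.

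First, for each $\alpha\in I$, I would decompose
\begin{equation*}
(h_\alpha f)\circ \kappa_\alpha \;=\; \sum_{\beta\in J(\alpha)} (h_\alpha\,h^{\rm geo}_\beta\,f)\circ \kappa_\alpha,
\qquad J(\alpha):=\{\beta\in J\colon U_\alpha\cap U^{\rm geo}_\beta\neq \varnothing\},
\end{equation*}
and rewrite each summand in the form
\begin{equation*}
(h_\alpha\circ \kappa_\alpha)\cdot\bigl[\bigl((h^{\rm geo}_\beta f)\circ \kappa^{\rm geo}_\beta\bigr)\circ \mu_{\beta\alpha}\bigr],
\qquad \mu_{\beta\alpha}:=(\kappa^{\rm geo}_\beta)^{-1}\circ \kappa_\alpha.
\end{equation*}
Condition (B1) provides derivative bounds on $\mu_{\beta\alpha}$ (and $\mu_{\alpha\beta}$) uniform in $\alpha,\beta$, while (B2) supplies the uniform derivative bounds on $h_\alpha\circ \kappa_\alpha$. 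Applying Lemma \ref{Sob_Rn}(ii) for the diffeomorphism step and then Lemma \ref{Sob_Rn}(i) for the multiplier step yields a constant $C=C(s,p,n)$, independent of $\alpha,\beta$, with
\begin{equation*}
\bigl\Vert (h_\alpha\,h^{\rm geo}_\beta\,f)\circ \kappa_\alpha\bigr\Vert_{H^s_p(\R^n)} \;\le\; C\,\bigl\Vert (h^{\rm geo}_\beta f)\circ \kappa^{\rm geo}_\beta\bigr\Vert_{H^s_p(\R^n)}.
\end{equation*}

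Next I would carry out the double summation. Because $\uT^{\rm geo}$ is uniformly locally finite and compatible with $\uT$ via (B1) (whose bounds force $U_\alpha$ to have controlled diameter in the geodesic metric), one obtains $|J(\alpha)|\le L_1$ uniformly in $\alpha$, and dually each $\beta$ belongs to $J(\alpha)$ for at most $L_2$ values of $\alpha$. Raising the previous estimate to the $p$-th power, applying the triangle inequality together with $|J(\alpha)|\le L_1$ and then interchanging the order of summation, gives
\begin{equation*}
\Vert f\Vert^p_{H^{s,\uT}_p}\;\le\; C^p L_1^{p-1} L_2\, \Vert f\Vert^p_{H^{s,\uT^{\rm geo}}_p}\;=\; C'\,\Vert f\Vert^p_{H^s_p(M)}.
\end{equation*}
The reverse inequality follows by exchanging the roles of $\uT$ and $\uT^{\rm geo}$: the bound on $\mu_{\alpha\beta}$ in (B1) is exactly the derivative bound needed in this direction, and the partition-of-unity estimate recorded in Example \ref{geod_triv} plays the role of (B2) on the geodesic side.

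The main technical obstacle I expect is the careful application of Lemma \ref{Sob_Rn}(ii), which is stated for a diffeomorphism $\kappa\colon V\to U$ between open subsets of $\R^n$, whereas $\mu_{\beta\alpha}$ is only defined on $\kappa_\alpha^{-1}(U_\alpha\cap U^{\rm geo}_\beta)$. To make the composition $((h^{\rm geo}_\beta f)\circ \kappa^{\rm geo}_\beta)\circ \mu_{\beta\alpha}$ a genuine element of $H^s_p(\R^n)$, I would exploit that the subsequent multiplication by $h_\alpha\circ \kappa_\alpha$ annihilates everything outside $\kappa_\alpha^{-1}(\supp h_\alpha)$: this permits an arbitrary smooth extension of $\mu_{\beta\alpha}$ beyond that support without affecting the product, and (B1) ensures that such an extension can be chosen with derivative bounds uniform in $\alpha,\beta$. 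Once this point is dispatched, everything else is bookkeeping around the uniform local finiteness of both covers and the two invocations of Lemma \ref{Sob_Rn}.
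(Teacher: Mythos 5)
Your proposal is correct and follows essentially the same route as the paper's proof: the same decomposition of $(h_\alpha f)\circ\kappa_\alpha$ over the geodesic charts meeting $U_\alpha$, the same two invocations of Lemma \ref{Sob_Rn} (diffeomorphism via (B1), multiplier via (B2)), and the same interchange of the double sum using uniform local finiteness, with the reverse inequality by symmetry. The only difference is that you make explicit the support/extension issue for $\mu_{\beta\alpha}$, which the paper leaves implicit.
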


\begin{proof}
The proof is based on pointwise  multiplier assertions and diffeomorphism properties of the spaces $H^s_{p}(\rn)$, see Lemma \ref{Sob_Rn}. Let $\uT=(U_{\alpha},\kappa_{\alpha},h_{\alpha})_{\alpha\in I}$  be an admissible trivialization. Let a geodesic trivialization $\uT^{\rm geo}=(U^{\rm geo}_\beta, \kappa^{\rm geo}_\beta, h^{\rm geo}_\beta)_{\beta \in J}$ of $M$, see Example \ref{geod_triv}, be given. If $\alpha\in I$ is given, the index set $A(\alpha)$ collects all $\beta\in J$ for which $U_\alpha\cap U^{\rm geo}_\beta\neq \varnothing$. The cardinality of $A(\alpha)$ can be estimated from above by a constant independent of $\alpha$ since the covers are uniformly locally finite.

We assume $f\in H^{s}_{p}(M)$. By Lemma \ref{Sob_Rn} and Definition \ref{bddcoord} we have for all $\alpha\in I$
\begin{align*}
\Vert(h_{\alpha}f)\circ \kappa_{\alpha}\Vert_{H^s_{p}(\rn)} = &\left\Vert\sum_{\beta\in A(\alpha)} (h_{\alpha}h^{\rm geo}_\beta f)\circ \kappa_{\alpha}\right\Vert_{H^s_{p}(\rn)} \leq \sum_{\beta\in A(\alpha)} \left\Vert (h_{\alpha} h^{\rm geo}_\beta f)\circ \kappa_{\alpha}\right\Vert_{H^s_{p}(\rn)}\\
=& \sum_{\beta\in A(\alpha)} \left\Vert( h_\alpha h^{\rm geo}_{\beta}f)\circ (\kappa^{\rm geo}_{\beta}\circ (\kappa^{\rm geo}_{\beta})^{-1})\circ\kappa_{\alpha}\right\Vert_{H^s_{p}(\rn)}
\lesssim \sum_{\beta\in A(\alpha)} \left\Vert(h_\alpha h^{\rm geo}_{\beta}f)\circ \kappa^{\rm geo}_{\beta}\right\Vert_{H^s_{p}(\rn)}
\\
\lesssim& 
 \sum_{\beta\in A(\alpha)} \left\Vert(h^{\rm geo}_{\beta}f)\circ \kappa^{\rm geo}_{\beta}\right\Vert_{H^s_{p}(\rn)}.
\end{align*}
In particular, the involved constant can be chosen independently of $\alpha$. Then 
\[
\Vert f\Vert_{H^{s,\uT}_{p}(M)}=\left(\sum_{\alpha\in I} \Vert (h_{\alpha}f)\circ \kappa_{\alpha}\Vert^p_{H_p^s(\rn)}\right)^{1/p}
\lesssim \left(\sum_{\alpha\in I, \beta\in A(\alpha)} \Vert (h^{\rm geo}_{\beta}f)
\circ \kappa^{\rm geo}_{\beta}\Vert^p_{H_p^s(\rn)}\right)^{1/p}\lesssim \Vert f\Vert_{H^{s}_{p}(M)}  
\]
where the last estimate follows from { $\sum_{\alpha\in I,\, \beta\in A(\alpha)}=\sum_{\beta\in J,\, \alpha\in A(\beta)}$ and  the fact that  the covers are uniformly locally finite}. The reverse inequality is obtained analogously. Thus, $H^{s,\uT}_p(M)=H^s_p(M)$.  
\end{proof}

In view of Remark \ref{rem_bdgeom}.iii, we would like to have a similar result for trivializations satisfying condition (B1).

\begin{lem} Let $(M,g)$ be a Riemannian manifold with positive injectivity radius, and let $\uT=(U_\alpha, \kappa_\alpha, h_\alpha)_{\alpha\in I}$ be a uniformly locally finite trivialization. Let  $g_{ij}$ be the coefficient matrix of $g$ and  $g^{ij}$ its inverse with respect to the coordinates $\kappa_\alpha$. Then,  $(M,g)$ is of bounded geometry and $\uT$ fulfills (B1) if, and only if, the following is fulfilled: 

For all $k\in \mN_0$ there is a constant $C_k>0$ such that for all multi-indices $\a$ with $|\a|\leq k$, 
\begin{equation}\label{B1-equiv} |\uD^\a g_{ij}|\leq C_k \quad \text{\ and\ }\quad  |\uD^\a g^{ij}|\leq C_k \end{equation}
holds in all charts $\kappa_\alpha$.
\end{lem}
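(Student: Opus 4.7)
The plan is to treat the two implications separately. For the direction ``bounded geometry and (B1) $\Rightarrow$ \eqref{B1-equiv}'', I would invoke Remark \ref{rem_bdgeom}.iii to obtain uniform $C^k$-bounds on $g_{ij}^\beta$ and its inverse $g^{ij}_\beta$ in any chart $\kappa^{\rm geo}_\beta$ of a geodesic atlas. Then I would apply the transformation formula \eqref{trans_g} with the transition function $\mu_{\alpha\beta}$ between a $\uT$-chart and a geodesic chart, whose derivatives are uniformly bounded to all orders by (B1). The chain rule and Leibniz rule express $\uD^\a g_{ij}^\alpha$ as a finite sum of products involving $\uD^{\a'} \mu_{\alpha\beta}$ and $\uD^{\a''} g^\beta_{kl}$ precomposed with $\mu_{\alpha\beta}$, all uniformly bounded. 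The inverse matrix $g^{ij}$ is treated by an analogous transformation rule, or equivalently by inductively differentiating the identity $g^{ij} g_{jk} = \delta^i_k$, whose leading term $g^{ij}(x)$ is already pointwise uniformly bounded after the geodesic-chart transfer.

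For the converse, assume \eqref{B1-equiv} in every chart of $\uT$. By formula \eqref{Christ_coord}, the uniform bounds on $g_{ij}$, $g^{ij}$ and their derivatives yield uniform $C^k$-bounds on the Christoffel symbols $\Gamma^k_{ij}$ in every $\kappa_\alpha$. Written as a first-order system in $(x, \dot x)$, the geodesic equation $\ddot x^k = -\Gamma^k_{ij}(x)\dot x^i \dot x^j$ has right-hand side whose derivatives are then uniformly bounded on any bounded set of velocities. Lemma \ref{flow_lem} therefore guarantees that the geodesic flow $\Phi(t, x_0, v_0)$ has uniformly bounded derivatives of every order for $t \in [0,1]$ and $|v_0| \leq r_M$. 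By Example \ref{geo_flow}, the transition map $F_\beta := \kappa_\alpha^{-1} \circ \kappa^{\rm geo}_\beta : B^n_{r_M} \to V_\alpha$ equals $v \mapsto \Phi(1, \kappa_\alpha^{-1}(p_\beta), L_\beta(v))$, where $L_\beta : \R^n \to \R^n$ represents the $g_{p_\beta}$-orthonormal frame $\lambda_\beta$ in the $\uT$-basis $(e_i^\alpha)|_{p_\beta}$. The pointwise uniform bounds on $g_{ij}$ and $g^{ij}$ force $L_\beta$ and $L_\beta^{-1}$ to have uniformly bounded entries, so $F_\beta$ has uniformly bounded derivatives of every order.

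The most delicate step is to bound the inverse transition $F_\beta^{-1}$ uniformly in all derivatives. Here $DF_\beta(0) = L_\beta$ has a uniformly bounded inverse, while $v \mapsto DF_\beta(v)$ on $B^n_{r_M}$ satisfies a Jacobi-type linear ODE along the geodesic whose coefficients involve $\Gamma^k_{ij}$ and $\partial_l \Gamma^k_{ij}$, already shown to be uniformly bounded. Standard linear-ODE estimates, combined with Liouville's formula for $\det DF_\beta$, then yield a uniform upper bound on $|DF_\beta|$ and a uniform lower bound on $|\det DF_\beta|$ throughout $B^n_{r_M}$; the positive injectivity radius assumption ensures $F_\beta$ remains a diffeomorphism onto its image, so the lower bound is genuinely attained. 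Differentiating $F_\beta \circ F_\beta^{-1} = \Id$ inductively and solving for the derivatives of $F_\beta^{-1}$ then produces uniform $C^k$-bounds for every $k$, establishing (B1). Finally, bounded geometry follows by running the first paragraph in reverse: \eqref{B1-equiv} in $\uT$-charts combined with (B1) and \eqref{trans_g} gives uniform bounds on $\uD^\a g^{\rm geo}_{ij}$ and $\uD^\a g^{ij}_{\rm geo}$ in geodesic charts, which by Remark \ref{rem_bdgeom}.iii is equivalent to bounded geometry.

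The main obstacle I anticipate is precisely this uniform lower bound on $|\det DF_\beta|$ on the full ball $B^n_{r_M}$, rather than only on a small neighborhood of the origin. Without a priori curvature bounds one cannot directly invoke standard Jacobi-field comparison theorems; the argument has to extract the needed quantitative non-degeneracy from the bounds on $\Gamma$ and $\partial\Gamma$ via Lemma \ref{flow_lem} (or directly via Liouville), while the positive injectivity radius rules out any vanishing of this determinant inside $B^n_{r_M}$. Once this estimate is secured, the remaining computations are routine applications of the chain and Leibniz rules.
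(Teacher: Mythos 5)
Your first direction (bounded geometry and (B1) $\Rightarrow$ \eqref{B1-equiv}) is exactly the paper's argument: Remark \ref{rem_bdgeom}.iii plus the transformation law \eqref{trans_g} plus chain/Leibniz rule. For the converse, your strategy of bounding $\Gamma^k_{ij}$ via \eqref{Christ_coord} and expressing $\kappa_\alpha^{-1}\circ\kappa_\beta^{\rm geo}$ through the geodesic flow, controlled by Lemma \ref{flow_lem}, is also the paper's route. The genuine gap is precisely the step you flag as the ``main obstacle'', and your proposed mechanism for closing it does not work. Liouville's formula applies to the first-order system in $(J,J')$ obtained from the variational (Jacobi-type) equation; since the companion matrix of that system is trace-free off the diagonal blocks, it controls the determinant of the full $2n\times 2n$ fundamental solution, \emph{not} of the $n\times n$ position block, which is what $DF_\beta$ is. That block is exactly the quantity that vanishes at conjugate points while the phase-space determinant stays constant, so no lower bound on $|\det DF_\beta|$ can come out of Liouville. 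Positive injectivity radius gives pointwise non-vanishing of $\det DF_\beta$ on $B^n_r$, $r<r_M$, but on a noncompact manifold with infinitely many charts this does not upgrade to the \emph{uniform} lower bound you need, and you do not supply the missing quantitative argument.

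The fix, which is what \cite[Lemma 3.8]{Schick01} (cited by the paper) actually uses, avoids the ODE for the inverse entirely. First observe that \eqref{B1-equiv} alone already gives bounded geometry, since $R^M$ and its covariant derivatives are polynomials in $g_{ij}$, $g^{ij}$ and their derivatives and the injectivity radius is positive by hypothesis; there is no need to defer this to the end. Bounded geometry then yields, via Remark \ref{rem_bdgeom}.iii, uniform two-sided bounds on $g^{\rm geo}_{ij}$ and its inverse as quadratic forms. Writing \eqref{trans_g} for $\mu=\kappa_\alpha^{-1}\circ\kappa_\beta^{\rm geo}$ as $(D\mu)^{T}\bigl(g^\alpha\circ\mu\bigr)(D\mu)=g^{\rm geo}$ and sandwiching both metrics between uniform multiples of the identity immediately gives uniform upper bounds on $|D\mu|$ \emph{and} $|(D\mu)^{-1}|$, i.e.\ the uniform non-degeneracy you were missing. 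Higher derivatives of the inverse transition map then follow by the inductive chain-rule argument on $\mu^{-1}\circ\mu=\Id$ that you already describe, and this also legitimizes your concluding step deriving bounded geometry ``in reverse'' (which becomes redundant once bounded geometry is obtained up front). With that replacement the proof is complete and coincides with the paper's.
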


\begin{proof} Let \eqref{B1-equiv} be fulfilled. Then, $(M,g)$ is of bounded geometry since $R^M$ in local coordinates  is given by a polynomial in $g_{ij}$, $g^{ij}$ and its derivatives.  Moreover,  condition (B1) follows from \cite[Lemma 3.8]{Schick01} -- we shortly sketch the argument here: Let $\Gamma_{ij}^k$ denote the Christoffel symbols with respect to coordinates $\kappa_\alpha$ for $\alpha\in I$. By \eqref{Christ_coord} and \eqref{B1-equiv}, there are constants $C_k>0$ for $k\in \mN_0$ such that $|\uD^\a \Gamma_{ij}^k|\leq C_k$ for all $\alpha\in I$ and all $\a\in\mN_0^n$ with $|\a|\leq k$.
Moreover, fix $r>0$ smaller than the injectivity radius of $M$. Let $\uA^{\rm geo}=(U^{\rm geo}_\beta=B_r(p_\beta^{\rm geo}), \kappa_\beta^{\rm geo})_{\beta\in J}$ be a geodesic atlas of $M$ where $r>0$ is smaller than the injectivity radius. {We get  that $(\kappa_\alpha)^{-1}\circ \kappa_\beta^{\rm geo}(x)= \Phi(1,\kappa_\alpha^{-1}(p_\beta), \kappa_\alpha^*(\lambda_\beta(x)))$ where $\Phi$ is the geodesic flow. Then, together with Lemma \ref{flow_lem} it follows that $(\kappa_\alpha)^{-1}\circ \kappa_\beta^{\rm geo}$ and all its derivatives are uniformly bounded independent on $\alpha$ and $\beta$. Moreover, note that $(\kappa_\beta^{\rm geo})^{-1}\circ \kappa_\alpha: \kappa_\alpha^{-1}(U_\alpha\cap U_\beta^{\rm geo})\subset B_r^n\to (\kappa_\beta^{\rm geo})^{-1}(U_\alpha\cap U_\beta^{\rm geo})\subset B_r^n$ is bounded by $r$. Hence, together with the chain rule applied to $((\kappa_\beta^{\rm geo})^{-1}\circ \kappa_\alpha) \circ ((\kappa_\alpha)^{-1}\circ \kappa_\beta^{\rm geo})  =\Id$ condition (B1)  
follows for all $(\alpha,  \beta)$.}

Conversely, let $(M,g)$ be of bounded geometry, and let condition (B1) be fulfilled. Then, by Remark \ref{rem_bdgeom}.iii and the transformation formula \eqref{trans_g} for $\alpha\in I$ and $\beta\in J$,  condition \eqref{B1-equiv} follows.
\end{proof}

\subsection{Besov spaces on manifolds}
Similar to the situation on $\rn$ we can define Besov spaces on manifolds via real interpolation of fractional Sobolev spaces $H^s_p(M)$. 

\begin{defi}\label{def-B-spaces}
Let $(M,g)$ be a manifold of bounded geometry. 
Furthermore, let $s_0,s_1\in \real$,  $1<p<\infty$ and $0<\Theta<1$. We define   
\beq\label{def-B-int}
B^{s}_{p,p}(M):=\left(H^{s_0}_p(M), H^{s_1}_p(M)\right)_{\Theta,p},
\eeq
where $s=\Theta s_0+(1-\Theta)s_1$.
\end{defi}

\begin{rem}\label{rem-B} The fractional Sobolev spaces $H^{s_i}_p(M)$ appearing  in Definition \ref{def-B-spaces} above should be understood in the sense of Definition \ref{H-koord}. For the sake of simplicity we restrict ourselves to admissible trivializations $\uT$  when defining Besov spaces on $M$. This way, 
 by Theorem \ref{indep_H}, we can omit the dependency on the trivializations $\uT$ from our notations in \ref{def-B-int} since resulting norms are equivalent and  yield the same spaces. Note that our spaces are well-defined since \eqref{def-B-int} is actually independent of $s_0$ and $s_1$.  An explanation is given in \cite[Theorem~7.3.1]{Tri92}.
Furthermore, an equivalent norm for $f\in B^s_{p,p}(M)$ is given by 
\beq\label{B-def}
\Vert f\Vert_{B^{s}_{p,p}(M)}=\left(\sum_{\alpha\in I} \Vert (h_\alpha f)\circ \kappa_\alpha\Vert^p_{B^s_{p,p}(\mathbb{R}^n)}\right)^{\frac{1}{p}}.
\eeq
 We sketch the proof.  {By $\ell_p(\Hsp)$ we denote the sequence space containing all sequences $\{f_{\alpha}\}_{\alpha\in I}$ such that the norm
\[
\|f_{\alpha}\|_{\ell_p(\Hsp)}:=\left(\sum_{\alpha\in I}\|f_{\alpha}\|_{\Hsp}^p\right)^{\frac 1p}
\]
is finite,} 
similar for $\ell_p(B^s_{p,p})$ with obvious modifications. 
Let $A(\alpha)=\{\beta\in I\ |\  U_{\beta}\cap U_{\alpha}\neq \emptyset\} $, and let 
$\Lambda_{\alpha}=\left(\sum_{\beta \in A(\alpha)}h_{\beta}\right)\circ \kappa_{\alpha}$. 
We define a linear and bounded operator
\[
\Lambda: \ell_p(\Hsp(\rn))\longrightarrow \Hsp(M), 
\]
via 
\[
\Lambda\left\{f_{\beta}\right\}_{\beta\in I}=\sum_{\beta\in I}(\Lambda_{\beta}f_{\beta})\circ \kappa_{\beta}^{-1},
\]
where $(\Lambda_{\beta}f_{\beta})\circ \kappa_{\beta}^{-1}$ is extended outside $U_{\beta}$ by zero. Furthermore, we consider
\[
\Psi: \Hsp(M)\longrightarrow \ell_p(\Hsp(\rn)),
\]
given by 
\[
\Psi (f)=\{(h_{\alpha}f)\circ \kappa_{\alpha}\}_{\alpha\in I}
\]
which is also a linear and bounded operator. In particular, we have that 
\[
\Lambda \circ \Psi=\Id \qquad \text{(identity in \ }\Hsp(M)).
\]
Having arrived at a standard situation of interpolation theory we use the method of retraction/coretraction, cf. \cite[Theorem~1.2.4]{T-interpol}, reducing \eqref{B-def} to the question whether 
\beq\label{b-int}
\Big(\ell_p(H^{s_0}_p), \ell_p(H^{s_1}_p)\Big)_{\Theta,p}
=\ell_p\Big(\big({H^{s_0}_p},H^{s_1}_p\big)_{\Theta,p}\Big),
\eeq
for $1<p<\infty$, $s_0, s_1\in \real$, $0<\Theta<1$, and $s=\Theta s_0+(1-\Theta)s_1$, 
which can be found in \cite[Theorem~1.18.1]{T-interpol}. Since by definition of Besov spaces  the right hand side of \eqref{b-int}  coincides with $\ell_p(B^s_{p,p})$, this proves \eqref{B-def}. 

\end{rem}

\section{Coordinates on submanifolds and Trace Theorems}

From now on let $N^k\subset M^n$  be an embedded submanifold, meaning, there is a $k$-dimensional manifold $N'$ and an injective immersion $f: N'\to M$  with $f(N')=N$. The aim of this section is to prove a Trace Theorem for $M$ and $N$. We restrict ourselves to submanifolds of bounded geometry in the following sense:

\begin{defi}\label{bdd_geo} 
Let $(M^n,g)$ be a  Riemannian manifold with a $k$-dimensional  embedded submanifold $(N^k, g|_N)$. We say
that $(M,N)$ is of bounded geometry if the following is fulfilled
\begin{itemize}
\item[(i)] $(M,g)$ is of bounded geometry.
\item[(ii)] The injectivity radius $r_N$ of $(N, g|_N)$ is positive.
 \item[(iii)] 

 There is a collar around $N$ (a tubular neighbourhood of fixed radius), i.e., there is $r_\partial>0$ such that for all $x,y\in N$ 
with $x\neq y$ the normal balls $B_{r_\partial}^\perp (x)$ and $B_{r_\partial}^\perp (y)$ are disjoint where \[ B_{r_\partial}^\perp (x):=\{z\in M\ |\ dist_M (x,z)\leq {r_\partial}, \exists \epsilon_0 \forall \epsilon<\epsilon_0:\  dist_M (x,z)=dist_M (B^N_\epsilon (x),z)\}\] 
 \begin{minipage}{0.45\textwidth}
 with 
$$B^N_\epsilon (x)=\{ u\in N\ |\ dist_N(u,x)\leq \epsilon\} $$ {and   
$dist_{M}$  and $dist_N$ denote the distance functions in $M$ and $N$, respectively}.
\end{minipage}\hfill 
\begin{minipage}{0.4\textwidth}
\begin{psfrags}
\psfrag{x}{\small $x$}
\psfrag{y}{\small $y$}
\psfrag{A}{\small $B_{r_\partial}^\perp (x)$}
\psfrag{B}{\small $B_{r_\partial}^\perp (y)$}
\psfrag{N}{$N$}
\psfrag{C}{\small{$B^N_\epsilon (x)$} }
{\includegraphics[width=5cm]{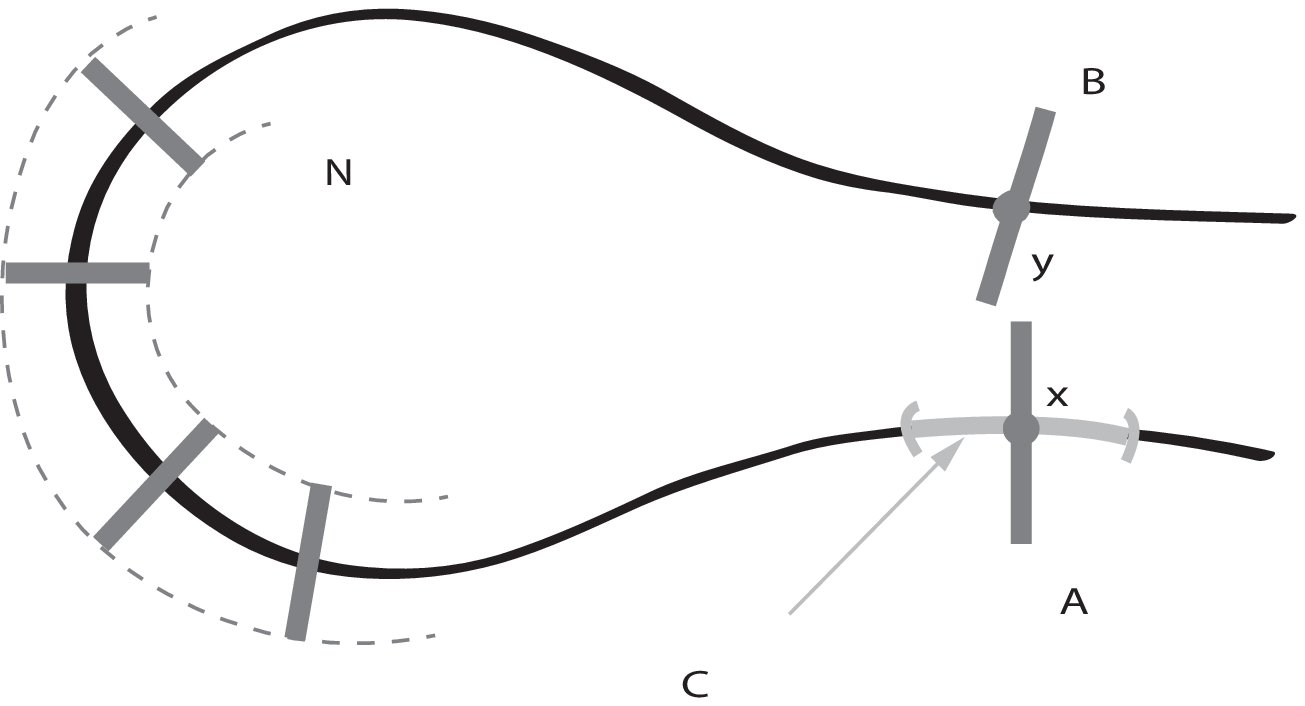}}
\end{psfrags}
\end{minipage}
 \item[(iv)] The mean curvature $l$ of $N$ given by
\[ l(X,Y):= \nabla^M_X Y- \nabla^N_X Y \quad \text{for all}\ X,Y\in T N,\]
and all its covariant derivatives are bounded. Here, $\nabla^M$ is the Levi-Civita connection of $(M,g)$ and $ \nabla^N$ the one of $(N, g|_N)$.
\end{itemize}
\end{defi}

\begin{rem}\hfill\label{rem-Fermi-collar}\\[-0.5cm]
\begin{itemize}
\item[i)] If the normal bundle of $N$ in $M$ is trivial, condition (iii) in Definition \ref{bdd_geo} simply means that $\{z\in M\ |\ dist_M(z,N)\leq r_\partial\}$ is diffeomorphic to $B_{r_\partial}^{n-k}\times N$. Then \[F: B_{r_\partial}^{n-k}\times N \to M;\ (t,z)\mapsto \exp^M_z \left(t^i\nu_i\right)\]
is a diffeomorphism onto its image, where $(t^1, ...,t^{n-k})$ are the coordinates for $t$ with respect to a standard orthonormal basis on $\mR^{n-k}$ and $(\nu_1, \ldots, \nu_{n-k})$ is an orthonormal frame for the normal bundle of $N$ in $M$.

If the normal bundle is not trivial (e.g. consider a noncontractible circle $N$ in the infinite M\"obius strip $M$), $F$ still exists locally, which means that for all $x\in N$ and $\epsilon$ smaller than the injectivity radius of $N$, the map $F: B_{r_\partial}^{n-k}\times B_\epsilon^N(x) \to M;\ (t,z)\mapsto \exp^M_z \left(t^i\nu_i\right)$ is a diffeomorphism onto its image. All included quantities are as in the case of a trivial vector bundle, but $\nu_i$ is now just a local orthonormal frame of the normal bundle. By abuse of notation, we suppress here and in the following the dependence of $F$ on $\epsilon$ and $x$.
\item[ii)] The illustration below on the left hand side shows a submanifold $N$ of a manifold $M$ that admits a collar. \\ On the right hand side one sees that for $M=\real^2$ the submanifold $N$ describing the curve which for large enough $x$ contains the graph of $x\mapsto x^{-1}$ together with the $x$-axes does not have a collar. This situation is therefore excluded by Definition \ref{bdd_geo}. However,  to a certain extend, manifolds as in the  picture on the right hand side can still be treated, cf. Example \ref{ex-loccoll} and Remark \ref{rem_th_loccoll}.\\[0.2cm]
\begin{minipage}{0.45\textwidth}
\begin{psfrags}
\psfrag{x}{$x$}
\psfrag{y}{$y$}
\psfrag{z}{$z$}
\psfrag{e}{$\exp_z^{M}$}
\psfrag{N}{$N$}
\psfrag{M}{$M$}
\psfrag{C}{\small{$F(B_{r_{\delta}}^{n-k}\times N)$} }
{\includegraphics[width=6.1cm]{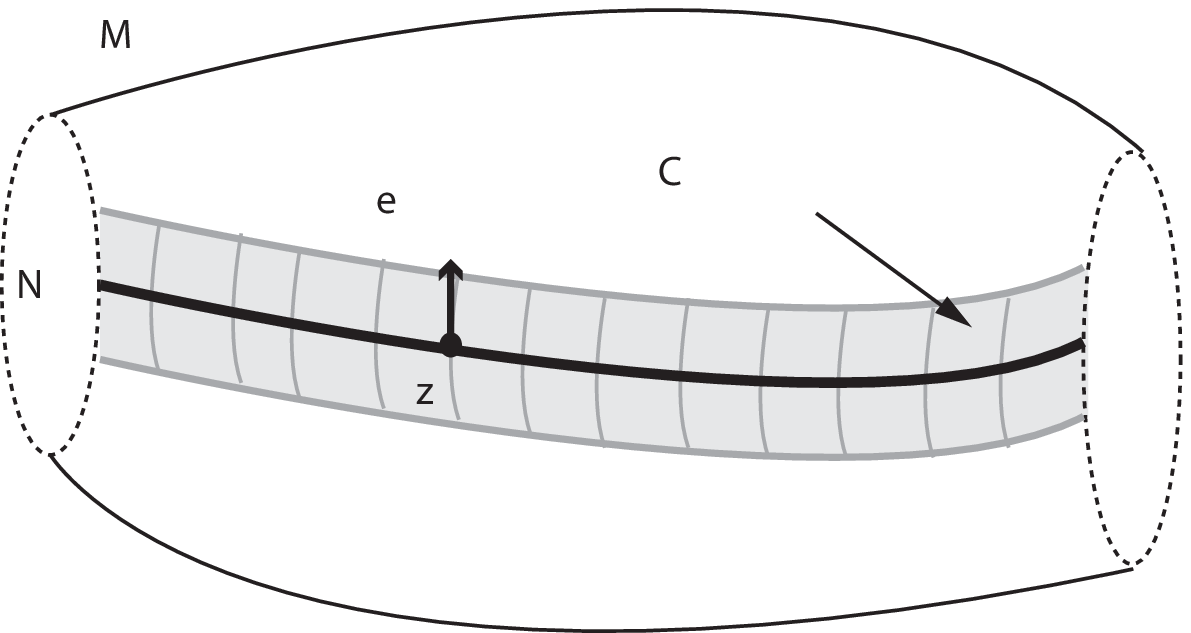}}
\end{psfrags}\\
\end{minipage}\hfill \begin{minipage}{0.45\textwidth}
\begin{psfrags}
\psfrag{x}{$x$}
\psfrag{y}{$y$}
\psfrag{z}{$z$}
\psfrag{a}{$x^{-1}$}
\psfrag{N}{$N$}
\psfrag{M}{$M=\real^2$}
\psfrag{g}{$\gamma$}
{\includegraphics[width=5.5cm]{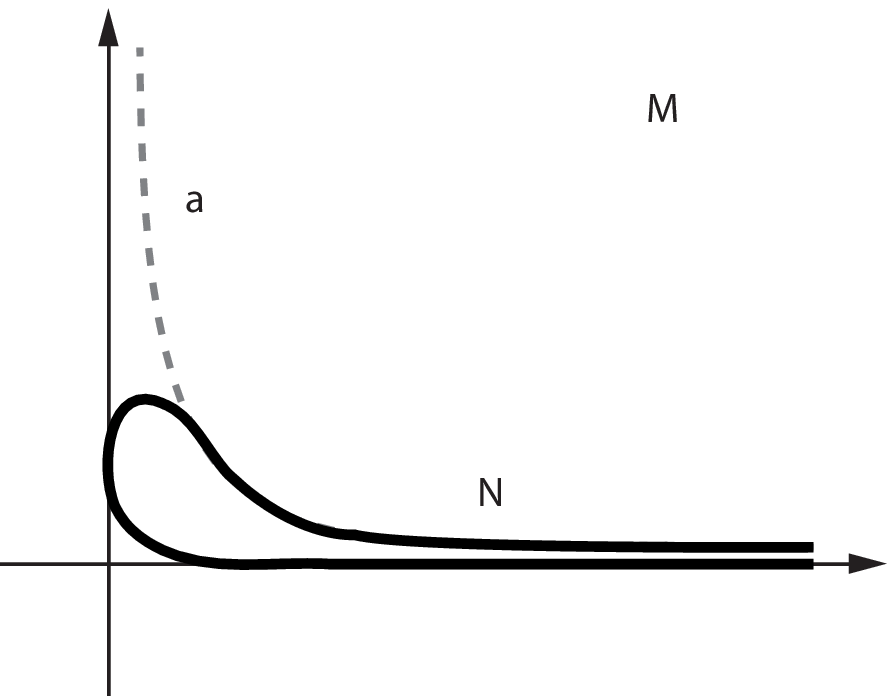}}
\end{psfrags}\\
\end{minipage}
\item[iii)] Although our notation $(M,N)$ hides the underlying metric $g$, this is obviously part of the definition and fixed when talking about $M$.
\item[iv)] If $N$ is the boundary of the manifold $M$, the counterpart of Definition \ref{bdd_geo} can be found in \cite[Definition 2.2]{Schick01}, where also Fermi coordinates are introduced and certain properties  discussed. In Section \ref{FC_sec}, we adapt some of the methods from \cite{Schick01} to our situation. Note that the normal bundle of the boundary of a manifold is always trivial, which explains why in \cite[Definition 2.2]{Schick01} condition (iii) of Definition \ref{bdd_geo}  reads as in Remark \ref{rem-Fermi-collar}.i.
\end{itemize}
\end{rem}

\subsection{Fermi coordinates}\label{FC_sec}

In this subsection we will introduce Fermi coordinates, which are special coordinates adapted to a submanifold $N$ of $M$ where $(M,N)$ is of bounded geometry. The resulting trivialization is used to prove the Trace Theorem in Section \ref{trace_th_sec}. 

\begin{defi}[\bf Fermi coordinates] \label{FC}  We use the notations from Definition \ref{bdd_geo}. Let $(M^n,N^k)$ be of bounded geometry. Let $R=\min\left\{ \frac{1}{2}r_N, \frac{1}{4}r_M, \frac{1}{2}r_\partial\right\}$, where $r_N$ is the injectivity radius of $N$ and $r_M$ the one of $M$. Let there be countable index sets $I_N\subset I$ and sets of points $\{p^N_\alpha\}_{\alpha\in I_N}$ and $\{p_\beta\}_{\beta\in I\setminus I_N}$  in $N$ and $M\setminus U_R(N)$, respectively, where $U_{R}(N):=\cup_{x\in N} B^{\perp}_R(x)$. Those sets are chosen such that
\begin{itemize}
 \item[(i)]  The collection of the metric balls $(B_{R}^N(p^N_\alpha))_{\alpha\in I_N}$ gives a uniformly locally finite cover of $N$.
 Here the balls are meant to be metric with respect to the induced metric $g|_N$.
 \item[(ii)]  The collection of metric balls $(B_{R}(p_\beta))_{\beta\in I\setminus I_N}$ covers $M\setminus U_{R}(N)$ and is uniformly locally finite on all of $M$.
\end{itemize}
We consider the covering $(U_\gamma)_{\gamma \in I}$ with $U_\gamma=B_{R}(p_\gamma)$ for $\gamma\in I\setminus I_N$ and $U_\gamma=U_{p_\gamma^N}:=F(B_{2R}^{n-k}\times B^N_{2R} (p_\gamma^N))$ with $\gamma\in I_N$.  Coordinates on $U_\gamma$ are chosen to be geodesic normal coordinates around $p_\gamma$ for $\gamma\in I\setminus I_N$. Otherwise, if $\gamma\in I_N$, coordinates are given by Fermi coordinates 
\begin{equation}\label{double_flow} \kappa_\gamma: V_{p_\gamma^N}:=B_{2R}^{n-k}\times B^k_{2R} \to U_{p_{\gamma}^N},\quad  (t,x)\mapsto \exp^M_{\exp^N_{p_{\gamma}^N}(\lambda_\gamma^N (x))} \left( t^i\nu_i \right)\end{equation}
where $(t^1,\ldots, t^{n-k})$ are the coordinates for $t$ with respect to a standard orthonormal basis on $\mR^{n-k}$, $(\nu_1, \ldots, \nu_{n-k})$ is an orthonormal frame for the normal bundle of $B_{2R}^N(p_\gamma^N)$ in $M$, $\exp^N$ is the exponential map on $N$ with respect to the induced metric $g|_N$, and $\lambda_\gamma^N: \mR^k\to T_{p_\gamma^N}N$ is the choice of an orthonormal frame on $T_{p_\gamma^N}N$.
\end{defi}
\begin{center}
\begin{psfrags}
    \psfrag{x}{$\real^k$}
	\psfrag{y}{$\real^{n-k}$}
	\psfrag{z}{\small{$p_{\gamma}^N$}}
    \psfrag{e}{$\exp_z^{M}$}	
     \psfrag{V}{$V_{p_{\gamma}^N}$}	
    \psfrag{M}{$M$}
    \psfrag{N}{$N$}
    \psfrag{a}{\small{\small $2R$}}
    \psfrag{P}{\small{\small $(x,t)$}}
    \psfrag{k}{$\kappa_{\gamma}$}
    \psfrag{U}{$U_{p_{\gamma}^N}$}
    \psfrag{c}{\small $\exp^N_{p_{\gamma}^N}(\lambda_{\gamma}^N(x))$}
    \psfrag{d}{\small $\exp^M_{\exp^N_{p_{\gamma}^N}(\lambda_{\gamma}^N(x))}(t^i\nu_i)$}
	{\includegraphics[width=14.5cm]{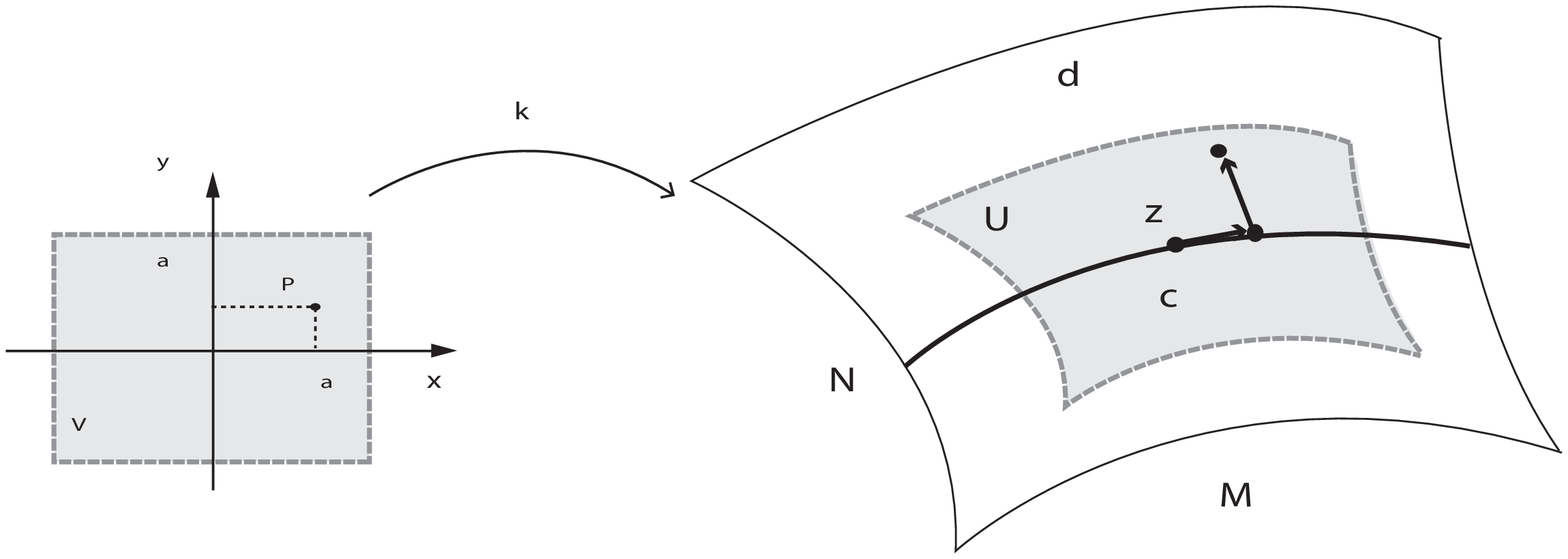}}
\end{psfrags}
\end{center}

Before giving a remark on the existence of the points $\{p_\gamma\}_{\gamma\in I}$ claimed in the Definition above, we prove two lemmata.

\begin{lem}\label{FC-0-bound} Let $(M^n, N^k)$ be of bounded geometry, and let $C>0$ be such that the Riemannian curvature tensor fulfills $|R^M|\leq C$ and mean curvature of $N$ $|l|\leq C$.
Fix $z\in N$ and $R$ as in Definition \ref{FC}. Let $U=F(B_{2R}^{n-k}\times B_{2R}^N(z))$, and let a chart $\kappa$ for $U$ be defined as above.  Then there is a constant $C'>0$ only depending on $C$, $n$ and $k$, such that $|g_{ij}|\leq C'$ and $|g^{ij}|\leq C'$ where $g_{ij}$ denotes the metric $g$ with respect to $\kappa$.
\end{lem}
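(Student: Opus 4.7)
The plan is to view the Fermi coordinate map $\kappa$ as the time-$1$ evaluation of a composed flow of geodesic equations, and to bound the metric via Jacobi field estimates on a fixed-size interval.

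First I would split $\kappa(x,t)=\exp^M_{\sigma(x)}(t^i\nu_i(\sigma(x)))$ with $\sigma(x):=\exp^N_{p_\gamma^N}(\lambda^N_\gamma(x))$, so that $\kappa$ is built from (a) the geodesic normal chart $\sigma$ on $N$ at $p_\gamma^N$ and (b) the normal exponential from $N$ into $M$. I would then analyse the metric in a parallel frame along the rays $t\mapsto \kappa(x_0,tv)$: at $t=0$ we have, by the orthonormality of $\lambda^N_\gamma$ and of $(\nu_1,\ldots,\nu_{n-k})$, that $g_{ij}(x_0,0)=\delta_{ij}$ in the normal block, $g_{ai}(x_0,0)=0$ in the mixed block (since $\partial_{x^a}\kappa|_{t=0}\in TN$ and $\nu_i\perp TN$), and in the tangential block $g_{ab}(x_0,0)$ is the induced metric on $N$ read in geodesic normal coordinates at $p_\gamma^N$. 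Since $|R^M|\le C$ and $|l|\le C$, the Gauss equation bounds $|R^N|$ by a constant depending only on $C$, so $(N,g|_N)$ is of bounded geometry and Remark~\ref{rem_bdgeom}.iii yields $|g_{ab}(x,0)|,|g^{ab}(x,0)|\le C'$ on $B^k_{2R}$, with $C'=C'(C,n,k)$.

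Next, I would propagate these bounds in the normal direction. The coordinate vector fields $\partial_{x^a}\kappa$ and the angular part of $\partial_{t^i}\kappa$ along the radial normal geodesic are Jacobi fields solving $J''+R^M(J,\dot\gamma)\dot\gamma=0$, whose initial values at $t=0$ are controlled by Step~1 and whose initial derivatives involve only the second fundamental form $l$ and the orthonormal frame $(\nu_i)$. Reading the equation in a parallel orthonormal frame turns it into a linear ODE with coefficient bounded by $|R^M|\le C$, so Gronwall on the interval $|t|\le 2R$ gives $|d\kappa|\le C''$ with $C''=C''(C,n,k,R)$. Since $R=\min\{\tfrac12 r_N,\tfrac14 r_M,\tfrac12 r_\partial\}$ keeps us strictly inside the injectivity radii of $\exp^N$ at $p_\gamma^N$ and of $\exp^M$ at every point of $B^N_{2R}(p_\gamma^N)$, as well as inside the collar of $N$, the map $\kappa$ is a diffeomorphism onto its image and $d\kappa$ is invertible on all of $V_{p_\gamma^N}$; a matching lower bound $|(d\kappa)^{-1}|\le C''$ follows from the same Jacobi comparison applied with reversed roles (or from computing the Jacobian determinant and bounding it away from zero via the same linear ODE). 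Since $g_{ij}=\<d\kappa(\partial_i),d\kappa(\partial_j)\>$ and $g^{ij}$ is the inverse matrix, we conclude $|g_{ij}|,|g^{ij}|\le C'''$ with $C'''=C'''(C,n,k)$, as claimed.

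The main obstacle is the \emph{uniform} lower bound on the singular values of $d\kappa$ (equivalently the upper bound on $|g^{ij}|$) on the full box $V_{p_\gamma^N}$ of size $2R$: the upper bound on $|d\kappa|$ is the easy Gronwall direction, while the lower bound must exploit the whole of Definition~\ref{bdd_geo}, namely that $R$ was chosen smaller than both injectivity radii \emph{and} smaller than the collar radius, so that focal points and self-intersections of the normal exponential are excluded on $B_{2R}^{n-k}\times B^N_{2R}(p_\gamma^N)$. Once this is secured, all constants depend only on $C$, $n$, $k$ (and the universal $R$), independently of $\gamma$, which is exactly the uniformity required.
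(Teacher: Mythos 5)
Your decomposition of $\kappa$ into the geodesic chart $\sigma$ on $N$ followed by the normal exponential, and your treatment of the initial data at $t=0$ (normal block $=\delta_{ij}$, mixed block $=0$, tangential block equal to the induced metric of $N$ in normal coordinates) is sound, and the Gronwall estimate on the Jacobi equation in a parallel frame does give the upper bound $|g_{ij}|\le C'$; this half is a legitimate, more hands-on alternative to the paper's argument. (One small caveat: Remark \ref{rem_bdgeom}.iii concerns bounds on all derivatives and presupposes full bounded geometry of $N$, whereas Lemma \ref{FC-0-bound} only assumes $|R^M|\le C$ and $|l|\le C$; for the zeroth-order bound on $g_{ab}(x,0)$ you should instead invoke the Rauch comparison on $N$ directly, using $|R^N|\lesssim C$ from the Gauss equation and $2R<r_N$.) The paper proceeds differently: it quotes Warner's extension of the Rauch comparison theorem to submanifolds of arbitrary codimension \cite[Theorem~4.4]{War}, comparing with constant-curvature models $M_{\pm C}$ containing submanifolds whose second fundamental form at the base point has eigenvalues $\pm C$, which delivers the two-sided bound $C_1|v|_E^2\le g_p(v,v)\le C_2|v|_E^2$ in one stroke, following \cite[Lemma~2.6]{Schick01}.

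There is, however, a genuine gap exactly at the point you yourself flag as the main obstacle, and neither of your proposed resolutions closes it. Gronwall applied to the linear Jacobi ODE controls the \emph{growth} of solutions, never their decay: a Jacobi field with $|R^M|\le C$ and controlled initial data can still vanish in finite time (in the curvature-$C$ model with principal curvatures $C$ the relevant solution is a combination of $\cos(\sqrt{C}\,t)$ and $\sin(\sqrt{C}\,t)$ and has a zero at a time depending only on $C$). Running Gronwall "with reversed roles" from $t$ back to $0$ yields a lower bound on $|J(t)|+t|J'(t)|$, not on $|J(t)|$; and the Jacobian determinant obeys a Riccati-type equation whose solution blows up at focal points, so it cannot be "bounded away from zero via the same linear ODE" either. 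The absence of focal points on $[0,2R]$, which does follow from the collar condition in Definition \ref{bdd_geo}(iii), gives only pointwise invertibility of $d\kappa$, not a \emph{uniform} lower bound over all charts, and on a noncompact $M$ no compactness argument supplies the uniformity. What is actually needed is the lower half of the Rauch/Warner comparison (equivalently an index-form argument), valid up to the first focal point of the curvature-$+C$ model -- precisely the content of \cite[Theorem~4.4]{War} on which the paper's proof rests. To repair your argument, replace the reversed-Gronwall step by an appeal to that comparison theorem, noting that the resulting constant then also implicitly requires $2R$ to lie below the focal radius of the model.
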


\begin{proof} For $N$ being the boundary of $M$ this was shown in \cite[Lemma 2.6]{Schick01}. We follow the idea given there and use the extension of the Rauch comparison theorem to submanifolds of arbitrary codimension given by Warner in \cite[Theorem 4.4]{War}. For the comparison, let $M_C$ and $M_{-C}$ be two complete $n$-dimensional Riemannian manifolds  of constant sectional curvature $C$ and $-C$, respectively. In each of them we choose a $k$-dimensional submanifold $N_C$ and $N_{-C}$, points $p_{\pm C}\in N_{\pm C}$ and a chart of $M_{\pm C}$ around $p_{\pm C}$ given by Fermi coordinates such that all eigenvalues of the second fundamental form with respect to those coordinates at $p_{\pm C}$ are given by $\pm C$ (this is always possible, cf. \cite[Chapter 7]{Spiv4}). 
Let $(\nu_i)_{1\leq i\leq n-k}$ be an orthonormal frame of the normal bundle of $U\cap N$ and $(e_i)_{1\leq i\leq k}$ be an orthonormal frame of $T|_{U\cap N} N$ { obtained via geodesic flow on $N$}. Let the frame $(\nu_1, \ldots, \nu_{n-k}, e_1, ..., e_k)$ be transported to all of $U$ via parallel transport along geodesics normal to $N$ -- the transported vectors are also denoted by $\nu_i$ and $e_i$, respectively.

Then, we are in the situation to apply \cite[Theorem 4.4]{War}: Let now $p\in U$ and $v\in T_pU$ with $v\perp \nu_i$ for all $1\leq i\leq n-k$. Then, the  comparison theorem yields  constants $C_1,C_2>0$, depending only on $C, n$, and $k$,  such that $ C_1|v|_E^2 \leq g_p(v,v)\leq C_2|v|_E^2$, { where $|.|_E$ denotes the Euclidean metric with respect to the basis $(e_i)$}. Moreover, we have $g_p(\nu_i,\nu_j)=\delta_{ij}$ and $g_p(\nu_i, e_l)=0$ for all $1\leq i,j\leq n-k$ and $1\leq l\leq k$, since this is true for $p\in U\cap N$, and this property is preserved by parallel transport. 
Altogether this implies the claim.
\end{proof}

The previous lemma enables us to show that  $(N, g|_N)$ is also of bounded geometry.

\begin{lem}\label{N_bdd_geo} 
If $(M,N)$ is of bounded geometry, then $(N, g|_N)$ is  of bounded geometry.
\end{lem}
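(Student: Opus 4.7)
By Definition \ref{defi_bdgeom}, bounded geometry of $(N, g|_N)$ requires two properties. The first -- positivity of the injectivity radius $r_N$ -- is supplied directly by Definition \ref{bdd_geo}(ii), so nothing remains to be done there. The task is therefore to prove uniform bounds $|(\nabla^N)^k R^N|_{g|_N} \leq C_k$ for every $k \in \nat_0$.

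The natural tool is the Gauss equation, which gives the purely algebraic identity
\[
\langle R^N(X,Y)Z, W\rangle = \langle R^M(X,Y)Z, W\rangle + \langle l(X,Z), l(Y,W)\rangle - \langle l(Y,Z), l(X,W)\rangle
\]
for $X,Y,Z,W \in TN$. Since $(M,g)$ has bounded geometry, $|R^M|_g \leq C_0$ on all of $M$; since $|l|$ is bounded by Definition \ref{bdd_geo}(iv); and since the orthogonal projection $TM|_N \to TN$ is a parallel bundle map of norm one, the $k=0$ estimate $|R^N|\leq C$ follows immediately.

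The higher-order bounds are obtained by induction on $k$. Differentiating the Gauss equation -- combined with the Codazzi--Mainardi identity to handle the normal component of $\nabla^M R^M|_N$ -- and using that $\nabla^N$ is the tangential part of $\nabla^M$ while the normal part of $\nabla^M X$ for $X\in TN$ is itself expressible through $l$, one shows that $(\nabla^N)^k R^N$ equals a universal polynomial expression in the restrictions $(\nabla^M)^j R^M|_N$ for $0\leq j\leq k$ and in the covariant derivatives $(\nabla^N)^j l$ for $0\leq j\leq k$, contracted with $g|_N$ and with the orthogonal projections $TM|_N\to TN$, $TM|_N\to \nu N$. Bounded geometry of $(M,g)$ controls the first type of factor, and Definition \ref{bdd_geo}(iv) controls the second, yielding $|(\nabla^N)^k R^N|\leq C_k$.

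The one point to be careful about is that the covariant derivatives of $l$ in Definition \ref{bdd_geo}(iv) must be understood with respect to the connection combining $\nabla^N$ on the $TN$-factors and the induced normal connection $\nabla^\perp$ on the normal-bundle factor; this is the standard convention and is precisely what the inductive step requires. Beyond this piece of bookkeeping I do not foresee any substantial obstacle, the result being essentially a consequence of the Gauss--Codazzi--Mainardi formalism together with the pointwise boundedness hypotheses built into Definition \ref{bdd_geo}.
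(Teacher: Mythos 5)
Your proof is correct, and it rests on the same central identity as the paper's -- the Gauss equation together with the hypothesized bounds on $R^M$, $l$ and their covariant derivatives -- but the execution is genuinely different. The paper argues through coordinates: it first invokes Lemma \ref{FC-0-bound} (the Rauch comparison argument) to bound $|g_{ij}|$ and $|g^{ij}|$ in Fermi coordinates, then converts the invariant bounds on $(\nabla^M)^jR^M$ and $\nabla^j l$ into bounds on their coordinate representations, and finally applies the Gauss equation and the formulas for covariant derivatives of tensors along $N$ from \cite[Lemma 2.22]{Schick01}. You instead stay entirely invariant: the $k=0$ case is immediate because restricting a bounded ambient tensor to $TN$ cannot increase its norm, and the inductive step uses that $\nabla^M_VX-\nabla^N_VX=l(V,X)$ for $V,X\in TN$, so that each $\nabla^N$-derivative of a restricted ambient tensor produces the restriction of the ambient derivative plus correction terms contracted with $l$. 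Since Definition \ref{defi_bdgeom} is itself stated invariantly, your route is self-contained and bypasses Lemma \ref{FC-0-bound} altogether, which is a mild simplification for this particular lemma (the paper needs that lemma elsewhere in any case). Two small remarks: the appeal to Codazzi--Mainardi is superfluous, since the full tensor norm of $(\nabla^M)^jR^M$ is bounded on all of $M$ and hence so is any component, normal or tangential, after contraction with norm-one projections; and your caveat about which connection is used for the covariant derivatives of $l$ in Definition \ref{bdd_geo}(iv) is well taken -- the paper leaves this implicit, and the mixed connection $\nabla^N\oplus\nabla^\perp$ you name is indeed what both proofs require.
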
 

\begin{proof} Since Definition \ref{bdd_geo} already includes the positivity of the injectivity radius of $N$, it is enough to show that $(\nabla^N)^k R^N$, where $R^N$ is the Riemannian curvature of $(N, g|_N)$,  is bounded for all $k\in \mathbb{N}_0$:

{Let $z\in N$. We consider geodesic normal coordinates  $\kappa^{\rm geo}:  B_{2R}^k \to  U^{\rm geo}=B_{2R}^N(z)$ on $N$ around $z$ and Fermi coordinates  $\kappa: B_{2R}^{n-k}\times B_{2R}^k \to U=F(B_{2R}^{n-k}\times B_{2R}^N(z)$ on $M$ around $z$, cf. Definition \ref{FC}.} Let $g_{ij}$ be the metric with respect to the coordinates given by $\kappa$, and let $g^{ij}$ be its inverse. 
Since $M$ is of bounded geometry,  Lemma \ref{FC-0-bound} yields  a constant $C$ independent on $z$ such that we have  $|g_{ij}|\leq C$ and $|g^{ij}|\leq C$.
Together with the uniform boundedness of  $R^M$, $l$ and their covariant derivatives, we obtain that their representations $R^M_{ijkl}$, $l_{rs}$ and their derivatives in the coordinates given by $\kappa$  are uniformly bounded for all $1\leq i,j,k,l\leq n$, $n-k+1\leq r,s\leq n-k$.  
Then the claim follows by the Gauss' equation \cite[p. 47]{Spiv4},
\begin{equation*}
 g(R^N(U,V)W,Z)=g(R^M(U,V)W,Z)+g(l(U,Z),l(V,W))-g(l(U,W),l(V,Z)) \; \  \text{for\ all\ } U,V,W,Z\in TN,
\end{equation*}
and the formulas for covariant derivatives of tensors along $N$. We refer to \cite[Lemma 2.22]{Schick01}, where everything is stated for hypersurfaces but the formulas remain true  for arbitrary codimension subject to obvious modifications.
\end{proof}

\begin{rem} \label{rem_FC}
{\bf i)} By construction the covering $(U_\gamma)_{\gamma\in I}$ is uniformly locally finite and $(U_\gamma':=U_\gamma\cap N, \kappa_\gamma'=\kappa_\gamma|_{\kappa_{\gamma}^{-1}(U_\gamma')})_{\gamma\in I_N}$ gives a geodesic atlas on $N$. Moreover, none of the balls $B_R(p_\beta)$ with $\beta\in I\setminus I_N$ intersects $N$.\\
{\bf ii)} Existence of points $p_\alpha^N$ and $p_\beta$ as claimed in Definition \ref{FC} {(note that the proofs of Lemmas  \ref{FC-0-bound} and \ref{N_bdd_geo} only use the definition of Fermi coordinates on a single chart and the existence of the points $p_\alpha^N$ and $p_\beta$)}: We choose a maximal set   $\{p_\alpha^N\}_{\alpha\in I_N}$ of points in $N$ such that the metric balls $B^N_{\frac{R}{2}}(p_\alpha^N)$ are pairwise disjoint. Then, the balls $B^N_{R}(p_\alpha^N)$ cover $N$. Since by Lemma \ref{N_bdd_geo} the submanifold $(N,g|_N)$ is of bounded geometry, the volume of metric balls in $N$ with fixed radius is uniformly bounded from above and from below away from zero. Let a ball $B_{2R}^N(p_\alpha^N)$ be intersected by $L$ balls $B^N_{2R}(p_{\alpha'}^N)$. Then the union of the $L$ balls $B^N_{2R}(p_{\alpha'}^N)$  forms a subset of $B_{4R}^N(p_\alpha^N)$. Comparison of the volumes gives an upper bound on $L$. Hence, the balls in $(B_{2R}^N(p_\alpha^N))_{\alpha\in I_N}$ cover $N$ uniformly 
locally finite. Moreover, choose a maximal set of points $\{p_\beta\}_{\beta\in I\setminus I_N}\subset M\setminus U_R(N)$ such that the metric 
balls $B_
{\frac{R}{2}}(p_\beta)$ are pairwise disjoint in $M$. Then the 
balls $B_{R}(p_
\beta)$ cover $M\setminus U_R(N)$. Trivially the balls $B_{R}(p_\beta)$ for $\beta\in I\setminus I_N$ cover $\cup B_{R}(p_\beta)$, and by volume comparison as above this cover is uniformly locally finite.
\end{rem}

\begin{lem}\label{FC-1}
 The atlas $(U_\gamma, \kappa_\gamma)_{\gamma\in I}$ introduced in Definition \ref{FC} 
fulfills condition (B1).
\end{lem}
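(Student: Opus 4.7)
The plan is to split the atlas into the two types of charts and to handle the (B1) bound on each. For $\gamma\in I\setminus I_N$ the chart $\kappa_\gamma$ is a geodesic normal chart on $M$, so (B1) against any fixed geodesic atlas $\uA^{\rm geo}$ reduces to Remark \ref{rem_bdgeom}.ii after refining both atlases to a common one. Hence the only genuine work lies in the Fermi charts $\kappa_\gamma$ for $\gamma\in I_N$.

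For these, write $\kappa_\gamma$ as the composition of three maps: (a) the geodesic exponential on $N$ sending $x\in B^k_{2R}$ to $q(x)=\exp^N_{p_\gamma^N}(\lambda_\gamma^N(x))$; (b) the choice of an orthonormal frame $(\nu_1(q),\ldots,\nu_{n-k}(q))$ of the normal bundle along $q(B^k_{2R})$; (c) the normal exponential on $M$ sending $(q,t)$ to $\exp^M_q(t^i\nu_i(q))$. I would control each factor separately through Lemma \ref{flow_lem} applied to a geodesic ODE, using that the Christoffel symbols of $(N,g|_N)$ and of $(M,g)$ in suitable coordinates are $C^k$-bounded uniformly: for $M$ by Remark \ref{rem_bdgeom}.iii together with Lemma \ref{FC-0-bound}, and for $N$ by Lemma \ref{N_bdd_geo} together with the same remark applied to $N$. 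In step (a) this yields uniform $C^k$-bounds on $x\mapsto q(x)$ read through geodesic normal charts on $N$, and in step (c) it yields uniform $C^k$-bounds on $(q,v)\mapsto \exp^M_q(v)$ read through geodesic normal charts on $M$.

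Step (b) is the main obstacle, since one needs a specific frame $\nu_i$ whose derivatives (in the $N$-chart around $p_\gamma^N$) are uniformly bounded. I would construct $\nu_i$ as follows: choose an orthonormal frame of the normal bundle at $p_\gamma^N$, then extend it by parallel transport in the normal bundle of $N\subset M$ along the radial geodesics of $N$ emanating from $p_\gamma^N$. Parallel transport is again a linear ODE whose coefficients are built from the Levi-Civita connection of $M$ and the second fundamental form $l$, both of which are $C^k$-bounded uniformly by Definition \ref{bdd_geo}(iv) and bounded geometry of $M$; applying Lemma \ref{flow_lem} once more gives uniform $C^k$-bounds on $q\mapsto\nu_i(q)$ in the chart on $N$. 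Since $R$ is chosen smaller than the relevant injectivity radii and than half the collar width, no completeness or cut-locus issues intervene on $V_{p_\gamma^N}$.

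Combining the three steps via the chain rule yields $|\uD^\a(\kappa_\beta^{\rm geo,-1}\circ\kappa_\gamma)|\leq C_k$ uniformly in $\gamma\in I_N$ and in $\beta$ with $U_\gamma\cap U^{\rm geo}_\beta\neq\varnothing$. For the reverse direction one argues that on the image side $V_{p_\gamma^N}=B^{n-k}_{2R}\times B^k_{2R}$ carries the flat product metric, while the pullback metric $\kappa_\gamma^* g$ has coefficients $g_{ij}$ with $g_{ij}$ and $g^{ij}$ uniformly $C^k$-bounded (Lemma \ref{FC-0-bound} for the $C^0$-bound, extended to higher derivatives by the same double-flow argument). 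Hence the Jacobian of $\kappa_\gamma^{-1}$, viewed as the inverse of a matrix with uniformly bounded entries and uniformly bounded determinant from below, admits uniform $C^k$-bounds on all its derivatives, which via the chain rule gives the bound for $|\uD^\a(\kappa_\gamma^{-1}\circ\kappa_\beta^{\rm geo})|$ as well. This establishes (B1).
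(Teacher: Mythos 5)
Your proof follows essentially the same route as the paper's: split off the purely geodesic charts via Remark \ref{rem_bdgeom}.ii, write the Fermi chart as a composition of the geodesic flow on $N$ and the normal geodesic flow on $M$, bound everything through Lemma \ref{flow_lem} using the uniform Christoffel-symbol bounds coming from bounded geometry of $M$ and (via Lemma \ref{N_bdd_geo}) of $N$, and invert by the chain rule together with a lower bound on the Jacobian determinant. Your step (b) --- constructing the normal frame $\nu_i$ by parallel transport in the normal bundle along radial $N$-geodesics and controlling it by a further application of Lemma \ref{flow_lem} --- is a welcome addition: the paper's sketch (which follows the hypersurface case of \cite{Schick01}, where the frame is just the unit normal) silently assumes such a uniformly controlled frame in higher codimension, so you have made explicit a choice the paper leaves implicit.
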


\begin{proof}For all $\gamma\in I\setminus I_N$ the chart $\kappa_\gamma$ is given by geodesic normal coordinates and, thus, condition (B1) follows from Remark {\ref{rem_bdgeom}.ii.}

Let now $\gamma\in I_N$.  Then the claim follows from \cite[Lemma 3.9]{Schick01}. We sketch the proof.  Consider a chart $(B_{4R}(p_\alpha^N), \kappa^{\rm geo})$ in $M$  and a chart $(B_{2R}^N(p_\alpha^N), \kappa^{N,\rm geo})$ in $(N,g|_N)$ both given by geodesic normal coordinates around $p_\alpha^N$ for $\alpha\in I_N$. Note that $4R< r_M$ by Definition \ref{FC}.  

Let $\Phi_2$ be the geodesic flow in $(N,g|_N)$ with respect to the coordinates given by $\kappa^{N, \rm geo}$, cf. Example \ref{geo_flow}. Let $\Phi_1$ be the corresponding geodesic flow in $(M,g)$ given by $\kappa^{\rm geo}$. {Then, $\Phi_2(1,0,x)=(\kappa^{N, \rm geo})^{-1}\circ \exp_{p_\alpha^N}^N( \lambda^N_\alpha (x))$ and
by \eqref{double_flow}, $\kappa_\alpha(t,x)= \kappa^{\rm geo}\circ \Phi_1(1,\Phi_2(1,0,x), (\kappa^{\rm geo})^*(t^i\nu_i))$ with $t=(t^1,\ldots, t^{n-k})\in \mR^{n-k}$.} Since $(M,g)$ is of bounded geometry, the coefficient matrix $g_{ij}$ of $g$ with respect to $\kappa^{\rm geo}$, its inverse and all its derivatives are uniformly bounded by \eqref{comp-coord}. Moreover, by Lemma \ref{N_bdd_geo} $(N,g|_N)$ is also of bounded geometry and, thus, we get an analogous statement for the coefficient matrix of $g|_N$ with respect to $\kappa^{N, \rm geo}$. Hence, applying Lemma \ref{flow_lem} to the differential equation of the geodesic flows, see Example \ref{geo_flow} and \eqref{Christ_coord}, we obtain that $(\kappa^{\rm geo})^{-1}\circ \kappa_\alpha$ and all its derivatives are bounded independent on $\alpha$. Conversely, $(\kappa_\alpha)^{-1}\circ \kappa^{\rm geo}: (\kappa^{\rm geo})^{-1}\circ \kappa_\alpha (B_{2R}^{n-k}\times B_{2R}^k) \subset B_{4R}^n\to B_{2R}^{n-k}\times B_{
2R}^k$ is bounded independent on $\alpha$. Hence, by using the chain rule on $((\kappa_\alpha)^{-1}\circ \kappa^{\rm geo}) \circ ((\kappa^{\rm geo})^{-1}\circ \kappa_\alpha) =\Id $ one sees that also the derivatives of  $(\kappa_\alpha)^{-1}\circ \kappa^{\rm geo}$ are  uniformly bounded, which gives the claim.
\end{proof}

\begin{lem}\label{part_unit_FC}
There is a partition of unity subordinated to the Fermi coordinates introduced in Definition \ref{FC} fulfilling condition (B2).
\end{lem}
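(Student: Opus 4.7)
The plan is to construct $(h_\gamma)_{\gamma\in I}$ by first introducing uniformly smooth bump functions in each chart and then normalizing. The crucial point is that the normalization preserves the required uniform derivative bounds; this will follow from condition (B1) (just proved in Lemma \ref{FC-1}) together with the uniform local finiteness of $(U_\gamma)_{\gamma\in I}$.

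First I would fix two model bump functions: a smooth $\phi_0\colon\mR^n\to[0,1]$ equal to $1$ on $B^n_{R/2}$ and supported in $B^n_R$, and a smooth $\phi_1\colon\mR^{n-k}\times\mR^k\to[0,1]$ equal to $1$ on $B^{n-k}_R\times B^k_R$ and supported in $B^{n-k}_{3R/2}\times B^k_{3R/2}$. Both can be chosen so that $|\uD^\a\phi_i|\leq C_{|\a|}$ for all multi-indices $\a$, independently of any chart. For $\gamma\in I\setminus I_N$ set $\phi_\gamma:=\phi_0$, and for $\gamma\in I_N$ set $\phi_\gamma:=\phi_1$; then $\tilde h_\gamma:=\phi_\gamma\circ\kappa_\gamma^{-1}$, extended by zero outside $U_\gamma$, is a smooth nonnegative function on $M$. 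Up to replacing the generating maximal sets of pairwise disjoint balls in Remark \ref{rem_FC}.ii by ones of half the radius (which leaves all previous statements intact), for every $z\in M$ there is some $\gamma$ with $\tilde h_\gamma(z)=1$, so $\Sigma:=\sum_\gamma\tilde h_\gamma\geq 1$ everywhere on $M$.

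Next I would show that $\tilde h_\delta\circ\kappa_\gamma$ has derivatives bounded uniformly in $\gamma$ and $\delta$. On $\kappa_\gamma^{-1}(U_\gamma\cap U_\delta)$ it equals $\phi_\delta\circ\mu_{\delta\gamma}$ with $\mu_{\delta\gamma}=\kappa_\delta^{-1}\circ\kappa_\gamma$, and Lemma \ref{FC-1} together with Remark \ref{rem_comp_T}.ii supplies uniform $C^k$-bounds on $\mu_{\delta\gamma}$. The chain rule combined with the bounds on $\phi_\delta$ then yields uniform bounds on $|\uD^\a(\tilde h_\delta\circ\kappa_\gamma)|$. Since $(U_\gamma)_{\gamma\in I}$ is uniformly locally finite with some multiplicity bound $L$, at most $L$ summands in $\Sigma\circ\kappa_\gamma$ are nonzero at any given point of $V_\gamma$, so one obtains $|\uD^\a(\Sigma\circ\kappa_\gamma)|\leq L\,C'_{|\a|}$ on $V_\gamma$, uniformly in $\gamma$.

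Finally, defining $h_\gamma:=\tilde h_\gamma/\Sigma$ gives a partition of unity subordinated to $(U_\gamma)_{\gamma\in I}$. Since $\Sigma\circ\kappa_\gamma\geq 1$ and has uniformly controlled derivatives, the standard quotient estimate applied to $h_\gamma\circ\kappa_\gamma=(\tilde h_\gamma\circ\kappa_\gamma)/(\Sigma\circ\kappa_\gamma)$ yields $|\uD^\a(h_\gamma\circ\kappa_\gamma)|\leq c_k$ for all $|\a|\leq k$ with $c_k$ independent of $\gamma$, which is precisely condition (B2). The main technical step is the uniform control of $\Sigma\circ\kappa_\gamma$ in the preceding paragraph, which relies crucially on condition (B1) from Lemma \ref{FC-1}; once that is in hand, the rest is a routine manipulation.
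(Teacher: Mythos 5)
Your proof is correct, and it takes a genuinely different route from the paper's. You use the classical normalization trick: uniformly controlled bumps $\tilde h_\gamma=\phi_\gamma\circ\kappa_\gamma^{-1}$ with $\Sigma=\sum_\gamma\tilde h_\gamma\geq 1$, followed by $h_\gamma=\tilde h_\gamma/\Sigma$, the point being that (B1) (via Lemma \ref{FC-1} and Remark \ref{rem_comp_T}.ii) gives uniform bounds on the mutual transition functions of the Fermi atlas, so that $\Sigma\circ\kappa_\gamma$ has uniformly bounded derivatives by uniform local finiteness and the quotient rule closes the argument. The paper instead builds the partition in two layers adapted to the collar: it first takes a partition of unity $(h'_\alpha)$ on $N$ subordinated to the induced geodesic atlas of $(N,g|_N)$ (using Lemma \ref{N_bdd_geo}), sets $h_\alpha=(\psi\times(h'_\alpha\circ\kappa'_\alpha))\circ\kappa_\alpha^{-1}$ for $\alpha\in I_N$ so that $\sum_{\alpha\in I_N}h_\alpha\equiv 1$ on the collar $U_R(N)$, and then distributes the remainder $1-\sum_{\alpha\in I_N}h_\alpha$ over the geodesic charts via a normalized geodesic partition of unity. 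What the paper's construction buys is extra structure used later in the proof of the Trace Theorem \ref{trace-th}: each $h_\alpha$ with $\alpha\in I_N$ is, in Fermi coordinates, an exact product of a function of the normal variable with a member of a partition of unity on $N$, and the restrictions $h_\alpha|_N$ are precisely that subordinated partition of unity of a geodesic atlas of $N$. Your construction proves the lemma as stated (and, since the balls $B_R(p_\beta)$ with $\beta\in I\setminus I_N$ do not meet $N$ by Remark \ref{rem_FC}.i, your $h_\gamma$ also restrict to a partition of unity on $N$), but it would require a small supplementary remark at that later point. Two cosmetic adjustments: take $\supp\,\phi_0$ strictly inside $B_R^n$ (say in $B_{3R/4}^n$) so that the extension of $\tilde h_\gamma$ by zero is manifestly smooth on $M$, and note that the halving of radii is only needed for the balls away from $N$, since $\phi_1\equiv 1$ on $B_R^{n-k}\times B_R^k$ already forces $\Sigma\geq 1$ on the collar.
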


\begin{proof} By Lemma \ref{N_bdd_geo}, $(N, g|_N)$ is of bounded geometry. Then,
by Example {\ref{geod_triv}}, there is a partition of unity $h'_\alpha$ subordinated to a geodesic atlas $(U_\alpha':=U_\alpha\cap N=B_{2R}^N(p_\alpha^N), \kappa_\alpha'=\kappa_\alpha|_{\kappa_\alpha^{-1}(U'_\alpha)})_{\alpha\in I_N}$ of $N$ such that for each $\a\in\mN_0^k$ the derivatives $\uD^\a (h'_\alpha\circ \kappa'_\alpha)$ are uniformly bounded independent of $\alpha$. Since by construction the balls $B_R^N(p_\alpha^N)$ already cover $N$ the functions $h'_\alpha$ can be chosen such that $\supp\, h'_\alpha\subset B_R^N(p_\alpha^N)$.

Choose a function $\psi: \mR^{n-k}\to [0,1]$ that is compactly supported on $B_{\frac 32 R}^{n-k}\subset \mR^{n-k}$ and $\psi|_{ B_{R}^{n-k}}=1$. Set $h_\alpha = (\psi\times (h'_\alpha\circ \kappa'_\alpha))\circ\kappa_\alpha^{-1}$ { on $U_\alpha$ and zero outside}. Then, $\supp\, h_\alpha\subset U_\alpha$ and all $\uD^\a (h_\alpha \circ \kappa_\alpha)$ are uniformly bounded by a constant depending on $|\a|$ but not on $\alpha\in I_N$.

Let $S\subset M$ be a maximal set of points containing the set $\{p_\beta\}_{\beta \in I\setminus I_N}$ of Definition \ref{FC} such that the metric balls in $\{B_{\frac{R}{2}}(p)\}_{p\in S}$ are pairwise disjoint. Then $(B_R(p))_{p\in S}$ forms a uniformly locally finite cover of M. We equip this cover with a geodesic trivialization $(B_R(p), \kappa^{\rm geo}_p, h^{\rm geo}_p)_{p\in S}$, see Example \ref{geod_triv}. For $\beta\in I\setminus I_N$ we have by construction $\kappa_{\beta}=\kappa^{\rm geo}_{p_\beta}$ and set 
\[h_{\beta}=\begin{cases} (1-\sum_{\alpha\in I_N} h_\alpha)\frac{h^{\rm geo}_{p_\beta}}{\sum_{\beta'\in I\setminus I_N} h^{\rm geo}_{p_{\beta'}}}, & \mathrm{where\ } \quad  \sum_{\beta'\in I\setminus I_N}h^{\rm geo}_{p_\beta'}\neq 0\\
                     0, & \mathrm{else}.
                    \end{cases}
\] 
Next, we will argue that all $h_\beta$ are smooth: It suffices to prove the smoothness in points $x\in M$ on the boundary of $\{\sum_{\beta'\in I\setminus I_N}h^{\rm geo}_{p_\beta'}\neq 0\}$. For all other $x$ smoothness follows by smoothness of the functions $h_\alpha$ and $h_p^{\rm geo}$. Let now $x\in M$ as specified above. Then  $\sum_{\beta'\in I\setminus I_N} h^{\rm geo}_{p_{\beta'}}(x)= 0$ and, thus, $x\in U_R(N)$ (cf. Remark \ref{rem_FC}.ii). Together with $\psi|_{B_{R}^{n-k}}=1$ this implies that { for $\epsilon$ small enough} there is a neighbourhood $B_\epsilon(x)\subset U_R(N)$ such that  $\sum_{\alpha\in I_N} h_\alpha(y)=1$ for all $y\in B_\epsilon(x)$.  Thus, $h_\beta|_{B_\epsilon(x)}=0$ and $h_\beta$ is smooth in $x$ for all $\beta\in I\setminus I_N$.

Moreover, by construction $\sum_{\beta\in I\setminus I_N} h_\beta + \sum_{\alpha\in I_N} h_\alpha= 1$. Hence, $(h_\gamma)_{\gamma\in I}$  gives a partition of unity subordinated to the Fermi coordinates.
The uniform boundedness of all $\uD^\a (h_{\beta}\circ \kappa_{\beta})$ follows from the uniform boundedness of all $\uD^\a (h_\alpha \circ \kappa_\alpha)$, $\uD^\a (h_{p_{\beta'}}^{\rm geo}\circ \kappa_{\beta'})$, $\uD^\a( \kappa_\alpha^{-1}\circ \kappa_\beta)$ and $\uD^\a ((\kappa_{\beta'})^{-1}\circ \kappa_\beta)$ together with  Remark \ref{rem_comp_T}.ii. 
\end{proof}

Collecting the last two lemmata we obtain immediately:

\begin{thm}\label{FC_admis} Let $(M,N)$ be of bounded geometry. Let $\uT^{\rm FC}$ be a trivialization of $M$ given by Fermi coordinates as in Definition \ref{FC} together with the subordinated partition of unity of Lemma \ref{part_unit_FC}. Then, $\uT^{\rm FC}$ is an admissible trivialization.
\end{thm}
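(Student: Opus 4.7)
My plan is to simply verify the three defining conditions of an admissible trivialization from Definition \ref{bddcoord} for $\uT^{\rm FC}$, reading off each condition from results already established earlier in Section \ref{FC_sec}. Since all the real work has been done in the preceding lemmata, the proof is essentially a bookkeeping exercise.

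First, I would note that the cover $(U_\gamma)_{\gamma \in I}$ underlying $\uT^{\rm FC}$ is uniformly locally finite. This is exactly what was built into the construction: by Definition \ref{FC}(i)--(ii) the two families $(B_R^N(p_\alpha^N))_{\alpha \in I_N}$ and $(B_R(p_\beta))_{\beta \in I \setminus I_N}$ are uniformly locally finite, and Remark \ref{rem_FC}.ii explains how such maximal separated systems of points can be chosen using a volume comparison argument relying on the bounded geometry of $N$ established in Lemma \ref{N_bdd_geo}. The balls $U_\gamma = F(B_{2R}^{n-k} \times B_{2R}^N(p_\gamma^N))$ for $\gamma \in I_N$ have bounded ``thickness'' transverse to $N$ (controlled by $r_\partial$), so the intersection multiplicity between Fermi tubes stays controlled, and Fermi tubes overlap with at most boundedly many geodesic balls $B_R(p_\beta)$ for $\beta \in I \setminus I_N$ for the same volume-comparison reason.

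Next, condition (B1) is precisely the content of Lemma \ref{FC-1}: the Fermi atlas $(U_\gamma, \kappa_\gamma)_{\gamma \in I}$ is compatible with geodesic coordinates in the sense that all derivatives of the transition functions $\mu_{\gamma\beta}$ and $\mu_{\beta\gamma}$ are uniformly bounded. For $\gamma \in I \setminus I_N$ this is immediate from Remark \ref{rem_bdgeom}.ii; for $\gamma \in I_N$ it uses the double flow representation of $\kappa_\gamma$ in \eqref{double_flow} combined with Lemma \ref{flow_lem} applied to both the geodesic flow on $(M,g)$ and on $(N,g|_N)$, the latter being of bounded geometry by Lemma \ref{N_bdd_geo}.

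Finally, condition (B2) is precisely the conclusion of Lemma \ref{part_unit_FC}: the partition of unity $(h_\gamma)_{\gamma \in I}$ constructed there satisfies $|\uD^\a(h_\gamma \circ \kappa_\gamma)| \leq c_k$ uniformly in $\gamma$ for every multi-index $\a$ with $|\a| \leq k$. Combining uniform local finiteness with (B1) and (B2) gives admissibility of $\uT^{\rm FC}$, which completes the proof. I do not expect any obstacles here, since the substantive analytic content (the Rauch comparison argument of Lemma \ref{FC-0-bound}, the bounded geometry of $N$ in Lemma \ref{N_bdd_geo}, and the flow estimates in Lemma \ref{FC-1}) has already been absorbed into the preceding lemmata; the ``hard part'' was really proving Lemma \ref{FC-1} and Lemma \ref{part_unit_FC}, and those are now available to be quoted directly.
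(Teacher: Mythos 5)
Your proposal is correct and matches the paper exactly: the theorem is stated there as an immediate consequence of collecting Lemma \ref{FC-1} (condition (B1)), Lemma \ref{part_unit_FC} (condition (B2)), and the uniform local finiteness of the cover from Definition \ref{FC} and Remark \ref{rem_FC}.ii. Your write-up simply makes the bookkeeping explicit, which is fine.
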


\subsection{Trace Theorem}\label{trace_th_sec}

Let $(M^n,g)$ be a Riemannian manifold together with  an embedded submanifold $N^k$ where $k<n$. 
For $f\in \mathcal{D}(M)$ the trace operator is defined by pointwise restriction,  
$$
\Tr_N\, f:=f\big|_N.
$$
Let $X(M)$ and $Y(N)$ be some function or distribution spaces on $M$ and $N$, respectively. 
If $\Tr_N$ extends to a continuous map from $X(M)$ into $Y(N)$,  we say that the trace exists in $Y(N)$. If this extension is onto, we write $\Tr_N\, X(M)=Y(N)$. \\

For fractional  Sobolev spaces on manifolds we have the following trace result. 

\begin{thm}\label{trace-th}
Let $(M^n,g)$ be a  Riemannian manifold  together with an embedded $k$-dimensional submanifold $N$. Let  $(M,N)$ be of bounded geometry.  If $1<p<\infty$ and $s>\frac{n-k}{p}$, then $\Tr_{N}$ is a linear and bounded operator from $H^s_p(M)$ onto $B^{s-\frac{n-k}{p}}_{p,p}(N)$, i.e., 
\beq\label{trace-mfd}
\Tr_N \ H^s_p(M)=B^{s-\frac{n-k}{p}}_{p,p}(N). 
\eeq
\end{thm}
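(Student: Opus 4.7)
The plan is to reduce the trace statement on $M$ to the classical Euclidean trace theorem
$\Tr_{\{t=0\}}\,H^s_p(\mathbb{R}^n)=B^{s-(n-k)/p}_{p,p}(\mathbb{R}^k)$
(valid for $s>(n-k)/p$, $1<p<\infty$) by using the admissible Fermi trivialization $\uT^{\rm FC}=(U_\gamma,\kappa_\gamma,h_\gamma)_{\gamma\in I}$ supplied by Theorem \ref{FC_admis}. The crucial observation is that on Fermi charts with $\gamma\in I_N$ the submanifold $N$ corresponds exactly to $\{0\}\times B^k_{2R}\subset \R^{n-k}\times\R^k$ (see \eqref{double_flow}), while the charts with $\gamma\in I\setminus I_N$ do not meet $N$ at all (Remark \ref{rem_FC}.i). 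By Lemma \ref{N_bdd_geo} the submanifold $(N,g|_N)$ is itself of bounded geometry, and a direct check using Lemmas \ref{FC-1} and \ref{part_unit_FC} shows that the restricted data $(U'_\gamma,\kappa'_\gamma,h'_\gamma:=h_\gamma|_N)_{\gamma\in I_N}$ forms an admissible trivialization of $N$; thus, by Theorem \ref{indep_H} and Remark \ref{rem-B}, it computes the $B^{s-(n-k)/p}_{p,p}(N)$-norm up to equivalence.

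For the boundedness of $\Tr_N$, let $f\in H^s_p(M)$. Chart-by-chart the Euclidean trace theorem gives
\begin{equation*}
\Vert ((h_\gamma f)\circ\kappa_\gamma)|_{t=0}\Vert_{B^{s-(n-k)/p}_{p,p}(\R^k)}\;\lesssim\;\Vert (h_\gamma f)\circ\kappa_\gamma\Vert_{H^s_p(\R^n)}
\end{equation*}
with a constant independent of $\gamma\in I_N$. Since $((h_\gamma f)\circ\kappa_\gamma)|_{t=0}=(h'_\gamma(\Tr_N f))\circ\kappa'_\gamma$ and the charts with $\gamma\in I\setminus I_N$ do not see $N$, raising to the $p$-th power and summing over $\gamma\in I_N$ immediately yields $\Vert \Tr_N f\Vert_{B^{s-(n-k)/p}_{p,p}(N)}\lesssim \Vert f\Vert_{H^s_p(M)}$.

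For surjectivity I would construct a linear bounded right inverse. Fix a Euclidean extension $\mathrm{ex}\colon B^{s-(n-k)/p}_{p,p}(\R^k)\to H^s_p(\R^n)$ with $(\mathrm{ex}\,v)|_{t=0}=v$, and a cut-off $\chi\in\mathcal{D}(\R^{n-k})$ with $\chi(0)=1$ and $\supp\,\chi\subset B^{n-k}_{2R}$. For $g\in B^{s-(n-k)/p}_{p,p}(N)$, set $g_\alpha:=(h'_\alpha g)\circ\kappa'_\alpha$ and
\begin{equation*}
\mathrm{Ex}_M g\;:=\;\sum_{\alpha\in I_N}\bigl(\chi(t)\,(\mathrm{ex}\,g_\alpha)(t,x)\bigr)\circ\kappa_\alpha^{-1},
\end{equation*}
each summand being extended by zero outside $U_\alpha$. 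Then $\Tr_N\,\mathrm{Ex}_M g=\sum_\alpha h'_\alpha g=g$. To estimate $\Vert \mathrm{Ex}_M g\Vert_{H^s_p(M)}^p\sim \sum_\beta\Vert (h_\beta\,\mathrm{Ex}_M g)\circ\kappa_\beta\Vert^p_{H^s_p(\R^n)}$, one decomposes $h_\beta\,\mathrm{Ex}_M g$ via uniform local finiteness, transfers each piece to the Fermi chart $\kappa_\alpha$ by Lemma \ref{Sob_Rn}(ii) (using admissibility (B1)), applies the pointwise multiplier estimate of Lemma \ref{Sob_Rn}(i) to $\chi$ and $h_\beta\circ\kappa_\alpha$ (using (B2)), and bounds $\Vert \mathrm{ex}\,g_\alpha\Vert_{H^s_p(\R^n)}\lesssim \Vert g_\alpha\Vert_{B^{s-(n-k)/p}_{p,p}(\R^k)}$. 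The finite overlap collapses the resulting double sum to $\sum_{\alpha\in I_N}\Vert g_\alpha\Vert_{B^{s-(n-k)/p}_{p,p}(\R^k)}^p\sim \Vert g\Vert_{B^{s-(n-k)/p}_{p,p}(N)}^p$ through the admissible $N$-trivialization.

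The main obstacle lies in the extension direction: one must verify that the normal cut-off $\chi(t)$ preserves the prescribed trace (which is immediate from $\chi(0)=1$) and, more substantially, that after decomposing $h_\beta\,\mathrm{Ex}_M g$ and transferring everything into a common chart, the resulting double sum over $\beta\in I$ and $\alpha\in I_N$ with $U_\alpha\cap U_\beta\neq\varnothing$ is uniformly controlled. This is precisely what the admissibility of $\uT^{\rm FC}$ established in Theorem \ref{FC_admis} is designed to support, through the uniform diffeomorphism and multiplier bounds needed for Lemma \ref{Sob_Rn} and the uniform local finiteness that turns the double sum back into a single one.
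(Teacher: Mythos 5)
Your overall strategy is exactly the paper's: localize with the admissible Fermi trivialization of Theorem \ref{FC_admis}, observe that its restriction to $N$ is a geodesic trivialization of $N$ computing the $B^{s-\frac{n-k}{p}}_{p,p}(N)$-norm, reduce the boundedness of $\Tr_N$ chart-by-chart to the Euclidean trace theorem (your identity $((h_\gamma f)\circ\kappa_\gamma)|_{t=0}=(h'_\gamma(\Tr_N f))\circ\kappa'_\gamma$ is justified by the facts in Remark \ref{trace_rn} together with $\sum_{\alpha\in I_N}h_\alpha=1$ near $N$, and it is essentially how the paper defines $\Tr_N$ on $H^s_p(M)$ in the first place), and build a right inverse by patching Euclidean extensions. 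Step 1 and the norm estimate for the extension are fine and match the paper's argument.

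There is, however, a genuine gap in your construction of $\mathrm{Ex}_M$: you cut off only in the normal variable $t$. The Euclidean extension operator $\mathrm{ex}$ (as in \cite[p.~138, Remark~1]{T-F1}) does \emph{not} preserve compact support in the tangential variable $x$ --- e.g.\ the standard Fourier-analytic construction produces summands $(\varphi_j\widehat{g_\alpha})^\vee$ which are entire, hence nowhere compactly supported in $x$. Consequently $\chi(t)\,(\mathrm{ex}\,g_\alpha)(t,x)$ is supported in $B^{n-k}_{2R}\times\mR^k$, not in $V_\alpha=B^{n-k}_{2R}\times B^k_{2R}$, and ``extending by zero outside $U_\alpha$'' truncates a non-vanishing function along the lateral boundary $B^{n-k}_{2R}\times\partial B^k_{2R}$ of the chart. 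The resulting summand is then not an $H^s_p$-function on $M$ (for the relevant range $s>\frac{n-k}{p}$) and the multiplier/diffeomorphism estimates of Lemma \ref{Sob_Rn} no longer apply. The fix is what the paper does: take $\psi=\psi_1\times\psi_2$ with $\psi_1\in\mathcal{D}(\mR^k)$, $\supp\psi_1\subset B^k_{2R}$, $\psi_1\equiv 1$ on $B^k_R$ (which contains $\supp g_\alpha$ by the construction of $h'_\alpha$ in Lemma \ref{part_unit_FC}) and $\psi_2\in\mathcal{D}(\mR^{n-k})$ with $\psi_2(0)=1$. Then $\psi\,\mathrm{ex}\,g_\alpha$ is compactly supported in $V_\alpha$, the trace identity $\Tr_{\mR^k}(\psi\,\mathrm{ex}\,g_\alpha)=g_\alpha$ still holds because $\psi|_{t=0}\equiv 1$ on $\supp g_\alpha$, and the rest of your estimate goes through as you sketched. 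Note also that the ``main obstacle'' you identify (controlling the double sum over $\alpha,\beta$) is routine given uniform local finiteness; the support issue above is the point that actually requires care.
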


\begin{rem}\label{trace_rn}
For $(M,N)=(\mathbb{R}^n, \mathbb{R}^k)$ this is a classical result, cf. \cite[p.~138, Remark~1]{T-F1} and the references given therein. Here we think of $\mathbb{R}^k\cong  \{0\}^{n-k}\times \mathbb{R}^k \subset \mathbb{R}^n$. Furthermore, in \cite[p.~138, Remark~1]{T-F1} it is also shown that $\Tr_{\mR^k}$ has a { linear} bounded right inverse -- an extension operator $\text{Ex}_{\mR^n}$. \\ 
Note that $\Tr_{\mR^k}$ respects products with test functions, i.e., for $f\in H^s_p(\mR^n)$ and $\eta\in \mathcal{D}(\mR^n)$ we have $\Tr_{\mR^k}\, (\eta f)= \eta|_{\mR^k} \Tr_{\mR^k}\, f$. Moreover, if $\kappa$ is a diffeomorphism on $\R^n$ such that $\kappa(\mR^k)=\mR^k$, then $\Tr_{\mR^k}\, (f\circ \kappa)= \Tr_{\mR^k}\, f \circ \kappa|_{\mR^k}$. 
\end{rem}

\begin{proof}[Proof of Theorem \ref{trace-th}]
Via localization and pull-back we will reduce \eqref{trace-mfd} to the classical problem  of traces  on hyperplanes $\real^k$ in $\rn$. The proof is similar to \cite[Theorem~1]{Skr90}, but the Fermi-coordinates enable us to drop some of the restricting assumptions made there.  

By Theorem \ref{FC_admis} we have an admissible trivialization $\uT=(U_\alpha, \kappa_\alpha, h_\alpha)_{\alpha\in I}$ of $M$ by Fermi coordinates and the subordinated partition of unity from Lemma \ref{part_unit_FC}. Moreover, by the construction of the Fermi coordinates it is clear that $\kappa_\alpha^{-1}(N\cap U_{p^N_\alpha}) =\{0\}^{n-k}\times B_{2R}^k$ and, thus, their restriction to $N$  gives a geodesic trivialization  $\uT^{N, \rm geo}=(U'_\alpha:=U_\alpha\cap N, \kappa'_\alpha:=\kappa_\alpha|_{\kappa_\alpha^{-1}( U_\alpha')}, h'_\alpha:=h_\alpha|_{U'_\alpha})_{\alpha \in I_N}$  of $N$.

{\em 1. Step:} Let $f\in H^s_p(M)$. We define the trace operator via
\[ 
 (\Tr_N f) (x):=\sum_{\alpha\in I_N} \Tr_{\mR^k}\left[(h_\alpha f)\circ \kappa_\alpha\right] \circ (\kappa_\alpha')^{-1}(x), \qquad x\in N. 
\]
Note that $\Tr_N$ is well-defined since $(h_\alpha f)\circ \kappa_\alpha\in H_p^s(\mR^n)$ and  $\supp\, \Tr_{\mR^k}((h_\alpha f)\circ \kappa_\alpha)\subset V_\alpha'=V_{\alpha}\cap \mathbb{R}^k$. Moreover, for fixed $x\in N$ the summation is meant to run only over those $\alpha$ for which  $x\in U'_{\alpha}$.  Hence, the summation only runs over finitely many $\alpha$ due to the uniform locally finite cover. Obviously, $\Tr_N$ is linear and $\Tr_N|_{\mathcal{D}(M)}$ is given by the pointwise restriction. In order to show that $\Tr_N: H_p^s(M) \to B^{s-\frac{n-k}{p}}_{p,p}(N)$ is bounded, we set $A(\alpha):=\{\beta\in I_N\ |\ U_\alpha\cap U_\beta\neq \varnothing\}$.  Since the cover is uniformly locally finite, the number of elements in $A(\alpha)$ is bounded independent of $\alpha$. Together with Lemma \ref{Sob_Rn} and Remark \ref{trace_rn} we obtain

\begin{align}
\Vert\Tr_N& f\Vert_{B^{s-\frac{n-k}{p}}_{p,p}(N)}^p =  \sum_{\beta\in I_N} \Vert (h'_\beta \Tr_N f) \circ \kappa'_\beta \Vert^p_{B^{s-\frac{n-k}{p}}_{p,p}(\real^k)}\nonumber\\ 
\lesssim& \sum_{\beta\in I_N;\, \alpha\in A(\beta)} \Vert (h'_\beta \circ \kappa'_\beta) \left(\Tr_{\mR^k}\left[(h_\alpha f)\circ \kappa_\alpha\right] \circ \left[(\kappa_\alpha')^{-1} \circ \kappa'_\beta\right] \right)\Vert^p_{B^{s-\frac{n-k}{p}}_{p,p}(\real^k)}\nonumber\\
= & \sum_{\beta\in I_N;\, \alpha\in A(\beta)} \Vert \Tr_{\mR^k}\left[(h_\alpha h_\beta f)\circ \kappa_\alpha\right] \circ \left[(\kappa_\alpha')^{-1} \circ \kappa'_\beta\right] \Vert^p_{B^{s-\frac{n-k}{p}}_{p,p}(\real^k)}\nonumber\\
\lesssim & \sum_{\beta\in I_N;\, \alpha\in A(\beta)} \Vert \Tr_{\mR^k}\left[(h_\alpha h_\beta f)\circ \kappa_\alpha\right]  \Vert^p_{B^{s-\frac{n-k}{p}}_{p,p}(\real^k)}\nonumber\\
\lesssim & \sum_{\alpha\in I_N; \beta\in A(\alpha)} \Vert (h_\alpha h_\beta f)\circ \kappa_\alpha  \Vert^p_{H^{s}_{p}(\real^n)} \lesssim  \sum_{\alpha\in I_N} \Vert (h_\alpha f)\circ \kappa_\alpha  \Vert^p_{H^{s}_{p}(\real^n)},\label{Tr_est}
\end{align}

and hence, $\Vert\Tr_N f\Vert_{B^{s-\frac{n-k}{p}}_{p,p}(N)}^p \lesssim \Vert f\Vert^p_{H^{s}_{p}(M)}$, { where the involved constants do not depend on $f$}.

{\em 2. Step:} We will show that $\Tr_N$ is onto by constructing a right inverse -- an extension operator $\mathrm{Ex}_M$. Firstly, let  $\psi_1 \in \mathcal{D}(\mR^{k})$, and  $\psi_2 \in \mathcal{D}(\mR^{n-k})$ such that $\supp\ \psi_1 \in B_{2R}^k$, $\supp\ \psi_2 \in B_{2R}^{n-k}$, $\psi_1\equiv 1$ on $B_{R}^k$ and $\psi_2\equiv 1$ on $B^{n-k}_{\frac 32 R}$.  Then, we put $\psi:=\psi_1\times \psi_2\in \mathcal{D}(\mR^n)$. 

Let $f'\in B^{s-\frac{n-k}{p}}_{p,p}(N)$. Then we define the extension operator  by
\begin{align*}
 (\mathrm{Ex}_M f')(x):=\left\{ \begin{matrix}\sum_{\alpha\in I_N} \left[\psi \mathrm{Ex}_{\mR^n}( (h'_\alpha f')\circ \kappa_\alpha'  \right)]\circ \kappa_\alpha^{-1}(x), & x\in U_{2R}(N)\\
0,& \text{otherwise.}
\end{matrix}
\right.
\end{align*}
Note that the use of $\psi$ is to ensure that $\psi \mathrm{Ex}_{\mR^n}( (h'_\alpha f')\circ \kappa_\alpha'  )$ is compactly supported in $V_\alpha= B_{2R}^{k}\times B_{2R}^{n-k}$ for all $\alpha\in I_N$. 
Hence, one sees immediately that $\mathrm{Ex}_M$ is well-defined and  calculates  $\Tr_N (\mathrm{Ex}_M f')=f'$.

Thus, $\Tr_N$ is onto. Moreover, in order to show that $\mathrm{Ex}_M: B^{s-\frac{n-k}{p}}_{p,p}(N)\to H^s_p(M)$ is bounded,  we use Lemma \ref{Sob_Rn} and Remark \ref{trace_rn} again, which give 
  
\begin{align*}
\Vert\mathrm{Ex}_M f'\Vert_{\Hsp(M)}^p=& \sum_{\alpha\in I}\Vert(h_{\alpha} \mathrm{Ex}_M f')\circ \kappa_{\alpha}\Vert_{H^s_p(\rn)}^p
\lesssim \sum_{\alpha\in I;\, \beta\in A(\alpha)}\left\Vert \left(h_{\alpha} 
\left( \left[\psi \mathrm{Ex}_{\mR^n}( (h'_\beta f')\circ \kappa_\beta'  )\right]\circ \kappa_\beta^{-1}\right)
\right)\circ \kappa_{\alpha}\right\Vert^p_{H^s_p(\rn)}\\
\lesssim & \sum_{\beta\in I_N;\, \alpha\in I;\, U_\alpha\cap U_\beta\neq \varnothing}\Vert (h_\alpha \circ \kappa_\beta) \psi \mathrm{Ex}_{\mR^n}( (h'_\beta f')\circ \kappa_\beta'  )\Vert^p_{H^s_p(\rn)} \lesssim \sum_{\beta\in I_N}\Vert \mathrm{Ex}_{\mR^n}( (h'_\beta f')\circ \kappa_\beta'  )\Vert^p_{H^s_p(\rn)}\\
\lesssim & \sum_{\beta\in I_N}\Vert  (h'_\beta f')\circ \kappa_\beta'  \Vert^p_{B^{s-\frac{n-k}{p}}_{p,p}(\mR^k)}= \Vert f'\Vert^p_{B^{s-\frac{n-k}{p}}_{p,p}(N)}.
\end{align*}
Note that the estimate in the second line uses  $(h_\alpha\circ \kappa_\beta)\psi\in \mathcal{D}(V_\beta)$. This finishes the proof. 
\end{proof}

{
According to the coincidence with the classical Sobolev spaces $W^k_p(M)$ in the case of bounded geometry, cf. \eqref{coinc}, we obtain the following trace result as a consequence. 
}

{
\begin{cor}
Let $(M^n,g)$ be a  Riemannian manifold  together with an embedded $k$-dimensional submanifold $N$. Let  $(M,N)$ be of bounded geometry.  If $m\in \nat$, $1<p<\infty$, and $\ m>\frac{n-k}{p}$, then $\Tr_{N}$ is a linear and bounded operator from $W^m_p(M)$ onto $B^{m-\frac{n-k}{p}}_{p,p}(N)$, i.e., 
\[
\Tr_N W^m_p(M)=B^{m-\frac{n-k}{p}}_{p,p}(N). 
\]
\end{cor}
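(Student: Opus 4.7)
The plan is essentially to reduce this corollary to Theorem \ref{trace-th} via the coincidence of classical and fractional Sobolev spaces on manifolds of bounded geometry. Since $(M,N)$ is of bounded geometry, in particular $(M,g)$ itself is of bounded geometry, so by \eqref{coinc} we have $W^m_p(M) = H^m_p(M)$ in the sense of equivalent norms for all $m \in \nat_0$ and $1<p<\infty$.

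First I would observe that the hypotheses $m \in \nat$ with $m > (n-k)/p$ are exactly the hypotheses of Theorem \ref{trace-th} applied with $s = m$, since $\nat \subset \real$ and the condition $s > (n-k)/p$ is the same. Applying Theorem \ref{trace-th} to $s=m$ yields that $\Tr_N$ is a linear and bounded operator from $H^m_p(M)$ onto $B^{m-(n-k)/p}_{p,p}(N)$ with a linear bounded right inverse $\mathrm{Ex}_M$.

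Next I would invoke \eqref{coinc} to identify $H^m_p(M)$ with $W^m_p(M)$. Since the two norms are equivalent, $\Tr_N$ (which a priori is defined on $\mathcal{D}(M)$ by pointwise restriction and then extended continuously) is also a linear and bounded operator $W^m_p(M) \to B^{m-(n-k)/p}_{p,p}(N)$, and the extension operator $\mathrm{Ex}_M$ continues to serve as a bounded right inverse $B^{m-(n-k)/p}_{p,p}(N) \to W^m_p(M)$. The surjectivity then follows from the existence of this right inverse, so $\Tr_N W^m_p(M) = B^{m-(n-k)/p}_{p,p}(N)$ as desired.

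There is no real obstacle here; the only subtlety worth mentioning explicitly is to make sure that the trace operator constructed in the proof of Theorem \ref{trace-th} (via Fermi-coordinate localization and pullback) coincides on $\mathcal{D}(M)$ with the pointwise restriction, which is immediate from the construction and Remark \ref{trace_rn}, so both characterizations of $\Tr_N$ agree and the result transfers from the $H^m_p$-setting to the $W^m_p$-setting without further work.
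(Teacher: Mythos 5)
Your proposal is correct and follows exactly the route the paper takes: the corollary is stated there as an immediate consequence of Theorem \ref{trace-th} combined with the coincidence $W^m_p(M)=H^m_p(M)$ from \eqref{coinc} on manifolds of bounded geometry. Your additional remark that the localized trace operator agrees with pointwise restriction on $\mathcal{D}(M)$ is a sensible (if implicit in the paper) sanity check and does not change the argument.
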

}

\begin{minipage}{0.63\textwidth}
\begin{ex} Our results generalize  \cite{Skr90} where traces were restricted to  submanifolds $N$ which had to be totally geodesic. By using Fermi coordinates we can  drop this extremely  restrictive assumption and cover  more (sub-)manifolds. \\For example, consider the case where $M$ is a surface of revolution of a curve $\gamma$ and $N$  a circle  obtained by the revolution of a fixed point $p\in M$. This resulting  circle is a geodesic if and only if the rotated curve has  an extremal point at $p$. But there is always a collar around $N$, hence, this situation  is also  covered by our assumptions. \end{ex}
\end{minipage}\hfill \begin{minipage}{0.35\textwidth}
\begin{psfrags}
    \psfrag{x}{\small $x$}
	\psfrag{y}{\small $y$}
	\psfrag{z}{\small $z$}
    \psfrag{p}{\small $p$}	
     \psfrag{N}{$N$}	
    \psfrag{M}{$M$}
    \psfrag{g}{$\gamma$}
	{\includegraphics[width=4.5cm]{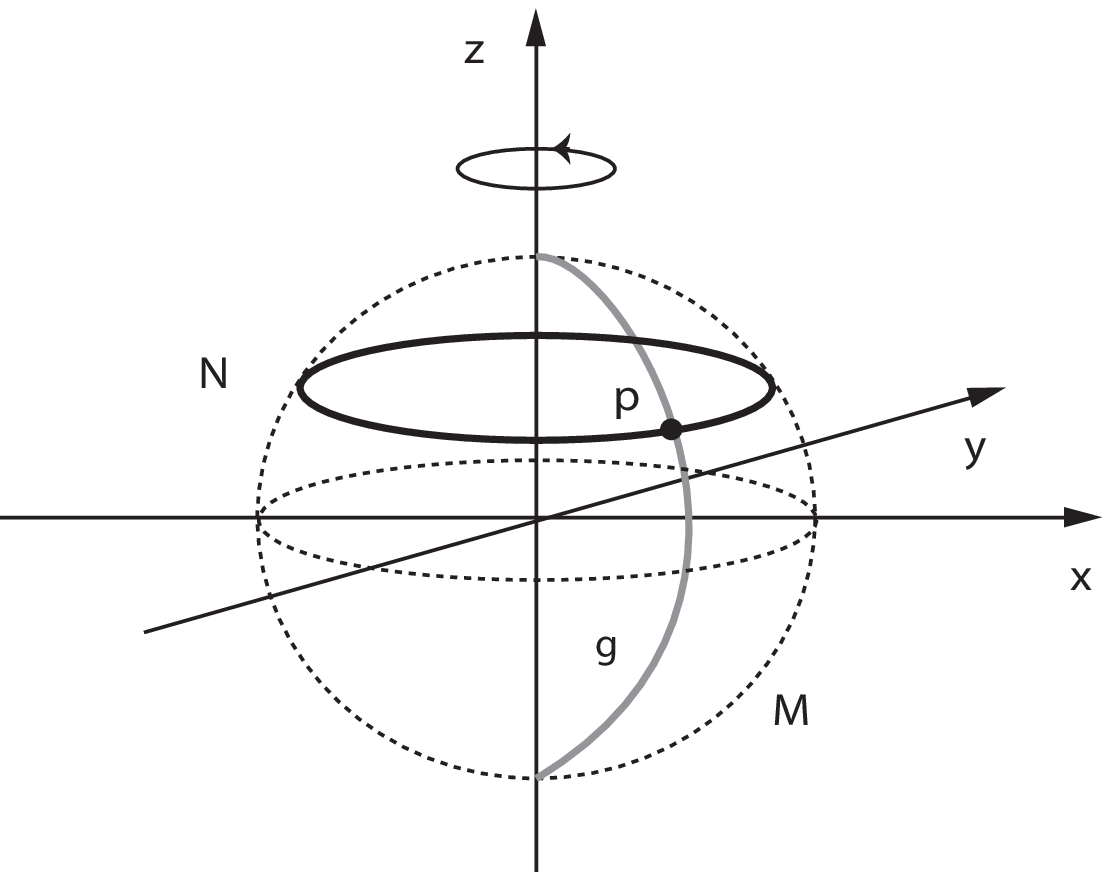}}
\end{psfrags}\\
\end{minipage}

\begin{rem}
We proved even more than stated. In Step 2 above it was shown that there exists a linear and bounded extension operator $\mathrm{Ex}_M$ from the trace space into the original space such that 
\[
\Tr_N\circ\mathrm{Ex}_M=\mathrm{Id},
\]
where $\mathrm{Id}$ stands for the identity in $B^{s-\frac{n-k}{p}}_{p,p}(N)$.  
 \end{rem} 

The first part of the  Trace Theorem \ref{trace-th} (i.e., the boundedness of the trace operator) can be extended to an even broader class of submanifolds. We give an example to illustrate the idea. 

\begin{ex} \label{ex-loccoll}
Let $(M^n,N_1^k)$ and $(M^n,N_2^k)$ be manifolds of bounded geometry with $N_1\cap N_2=\varnothing$. Set $N:=N_1\cup N_2$. Clearly, $\Tr_N=\Tr_{N_1}+\Tr_{N_2}$ (where  $\Tr_{N_1} f$ and $\Tr_{N_2} f$ are viewed as  functions on $N$ that equal zero on $N_2$ and $N_1$, respectively), and $\Tr_N$ is a linear bounded operator from $H_p^s(M)$ to $B_{p,p}^{s-\frac{n-k}{p}}(N)$. One may think of $N=\text{Graph}(x\mapsto x^{-1}) \cup x-\text{axis}\subset \mR^2$, where $(\mR^2, N)$ does not posses a uniform collar, cf. Definition \ref{bdd_geo}.iii. 

The boundedness of $\Tr_N$ is no longer expectable for an arbitrary infinite union of $N_i$, e.g., consider $N_i=\mR\times \{i^{-1}\}\subset \mR^2$, $i\in \mN$ and put  $f=\psi_1\times \psi_2$ with $\psi_1,\psi_2\in \mathcal{D}(\mR)$. Then $N=\sqcup_ iN_i\hookrightarrow \mR^2$ is an embedding, when $N$ is equipped with standard topology on each copy of $\mR$. But one cannot expect the trace operator to be bounded, since not every function $f\in C_c^\infty(\mR^2)$ restricts to a compactly supported function on $N$ ($N$ as a subset of $\mR^2$ is not intersection compact). \\
This problem can be circumvented when requiring that the embedded submanifold $N$ has to be a closed subset of $M$. However, even in this situation on can find  submanifolds $N$ for which the trace operator is not bounded in the sense of \eqref{Tr_est},  e.g. consider $N=\sqcup_{i\in \mN} \sqcup_{j=0}^{i-1} \mR\times \{i+\frac{j}{i}\}\hookrightarrow \mR^2$. 
\end{ex}

\begin{rem}\label{rem_th_loccoll} The above considerations give rise to the following  generalization of  Step 1 of the Trace Theorem \ref{trace-th}.
Assume that $N$ is a $k$-dimensional embedded submanifold of $(M^n,g)$ fulfilling (i), (ii) and (iv) of Definition \ref{bdd_geo} -- but not (iii). Lemmas \ref{FC-0-bound} and \ref{N_bdd_geo} remain valid, since their proofs do not use (iii). We replace (iii) with the following weaker version:
\begin{itemize}
 \item[(iii)$^\prime$] Let $(U'_\alpha=B_{2R}(p^N_\alpha))_{\alpha\in I_N}$ be a uniformly locally finite cover of $N$. Set $U_\alpha=F(B_{2R}^{n-k}\times B_{2R}^N(p_\alpha^N))$ as before. Then, $(U_\alpha)_{\alpha\in I_N}$ is a uniformly locally finite cover of $\cup_{\alpha \in I_N} U_\alpha$. 
\end{itemize}
Condition (iii)$^{\prime}$ excludes the negative examples from above . Furthermore, (iii)$^{\prime}$  together with the completeness of $N$  implies that $N$ is a closed subset of $M$.

With this modification, one can still consider  Fermi coordinates as in Definition \ref{FC}  but in general $U_{p_\alpha^N}\cap N\neq B_{2R}^N(p_\alpha^N)=: U'_{p_\alpha^N}$. Also the partition of unity can be constructed as in Lemma \ref{part_unit_FC} when making the following step in between: Following the proof of Lemma \ref{part_unit_FC} we define the map $\tilde{h}_\alpha=(\psi\times (h_\alpha' \circ \kappa_\alpha')) \circ \kappa_\alpha^{-1}$ for $\alpha\in I_N$. Since in general $\sum_{\alpha\in I_N} \tilde{h}_\alpha (x)$ can be bigger than one, those maps cannot be part of the desired partition of unity. Hence, we put $h_\alpha =\tilde{h}_\alpha(\sum_{\alpha'\in I_N} \tilde{h}_{\alpha'})^{-1}$ where $\sum_{\alpha'\in I_N} \tilde{h}_{\alpha'}\neq 0$ and $h_\alpha=0$ else. Smoothness and uniform boundedness of the derivatives of $h_\alpha$ follow as in  Lemma \ref{part_unit_FC}. Then one proceeds as before, defining $h_\beta$ for $I\setminus I_N$.\\
Now the proof of Step 1 of the Trace Theorem \ref{trace-th} carries over when replacing $h_\alpha$ by $\tilde{h}_\alpha$ in the definition of $\Tr_N$ and in the estimate  \eqref{Tr_est}. This leads to $\Vert \Tr_N f\Vert^p_{B_{p,p}^{s-\frac{n-k}{p}}(N)} \lesssim \sum_{\alpha\in I_N} \Vert(\tilde{h}_\alpha f)\circ \kappa_\alpha\Vert_{H_p^s(\mathbb{R}^n)}^p$. Finally, 

\begin{align*}\Vert \Tr_N f\Vert^p_{B_{p,p}^{s-\frac{n-k}{p}}(N)} \lesssim& \sum_{\alpha\in I_N} \Vert(\tilde{h}_\alpha f)\circ \kappa_\alpha\Vert_{H_p^s(\mathbb{R}^n)}^p = \sum_{\alpha\in I_N} \Vert(h_\alpha (\sum_{\beta\in I_N} \tilde{h}_{\beta})  f)\circ \kappa_\alpha\Vert_{H_p^s(\mathbb{R}^n)}^p\\
 \lesssim &\sum_{\alpha\in I_N;\ \beta\in A(\alpha)} \Vert(h_\alpha \tilde{h}_{\beta}  f)\circ \kappa_\alpha\Vert_{H_p^s(\mathbb{R}^n)}^p \lesssim\sum_{\alpha\in I_N} \Vert(h_\alpha f)\circ \kappa_\alpha\Vert_{H_p^s(\mathbb{R}^n)}^p\leq \Vert f\Vert_{H_p^s(M)}^p,
\end{align*}
demonstrates the boundedness of the trace operator $\Tr_N$ under this generalized assumptions on the submanifold $N$.
\end{rem}

\section{Vector bundles}\label{sec_vec_bu}

The results about function spaces on manifolds of bounded geometry obtained so far can be transferred to certain vector bundles. For that we need  a  concept of bounded geometry for vector bundles. After giving such a definition, we shall proceed along the lines of the previous section --  introducing synchronous trivialization along geodesic normal coordinates and  Fermi coordinates and stating a corresponding Trace Theorem.

\subsection{Vector bundles of bounded geometry}

\begin{defi}\label{def_vec_bddgeo}\cite[Section A1.1]{Shu} Let $E$ be a vector bundle over a Riemannian manifold $(M,g)$ of bounded geometry together with an atlas $\uA_E=(U^{\rm geo}_\alpha, \kappa^{\rm geo}_\alpha, \xi_\alpha)_{\alpha \in I}$ where $(U^{\rm geo}_\alpha, \kappa^{\rm geo}_\alpha)_{\alpha\in I}$ is a geodesic atlas of $M$ as in Example \ref{geod_coord_triv}. Let $\tilde{\mu}_{\alpha\beta}$ denote the transition functions belonging to $\xi_\alpha, \xi_\beta$. The vector bundle $E$ together with the choice of an atlas $\uA_E$ is said to be of bounded geometry if for all $k\in \mN_0$ there is a constant $C_k$ such that $|\uD^\a \tilde{\mu}_{\alpha\beta}|\leq C_k$ for all $\alpha, \beta\in I$ with $U^{\rm geo}_\alpha\cap U^{\rm geo}_\beta\neq \varnothing$ and all multi-indices $\a$ with $|\a|\leq k$.
\end{defi}

We give an example of a special trivialization $\xi_\alpha$:

\begin{defi}[\bf Synchronous trivialization along geodesic normal coordinates]\label{def_syn_geo}\hfill\\
Let $(E,\nabla^E,\<.,.\>_E)$ be a Riemannian or hermitian vector bundle over a Riemannian manifold $(M^n,g)$. Let $M$ be of bounded geometry, and let the connection $\nabla^E$ be metric.
Let $\uA^{\rm geo}=(U^{\rm geo}_\alpha, \kappa^{\rm geo}_\alpha)_{\alpha \in I}$ be a geodesic atlas of $M$ as in Example \ref{geod_coord_triv}, and let $p_\alpha$ denote the center of the ball $U^{\rm geo}_\alpha$. The choice of the orthonormal frame on $T_{p_\alpha}M$ -- {already used in the definition of the geodesic coordinates, cf. Example \ref{geod_coord_triv} --  is again denoted by ${\lambda}_\alpha: \mR^n\to T_{p_\alpha}M$.}  We choose an orthonormal frame $(\tilde{e}_1(p_\alpha), \ldots, \tilde{e}_r(p_\alpha))$ for each $E_{p_\alpha}$ ($\alpha\in I$).  Then, $E|_{U^{\rm geo}_\alpha}$ is trivialized by parallel transport along radial geodesics emanating from $p_\alpha$ as follows: For $1\leq \rho\leq r$, let $X_\rho^ v(t)\in E_{c_v(t)}$  be the unique solution of the differential equation  $\nabla^E_{\dot{c}_v}X_\rho^v=0$ with $X_\rho^ v(0)=\tilde{e}_\rho(p_\alpha)$ and $c_v(t)$ being the unique geodesic with $c_v(0)=p_\alpha$ and $\dot{c}_v(0)=v\in T_{p_\alpha}^{\leq r} M$, where $r$ is smaller 
than the injectivity radius of $M$.
Then the trivialization by parallel transport is given by
$\xi_\alpha^{\rm geo}: (x,u)\in V_\alpha^{\rm geo}\times \mC^r \mapsto u^\rho X_\rho^{\lambda_\alpha(x)}(1)\in E|_{U_\alpha^{\rm geo}}$ and is called synchronous trivialization (along geodesic normal coordinates). $\uA_E^{\rm geo}=(U^{\rm geo}_\alpha, \kappa^{\rm geo}_\alpha, \xi_\alpha^{\rm geo})_{\alpha \in I}$ is called a geodesic atlas of $E$.
\end{defi}

Note that by construction, $h_{\sigma\tau}(0)=\delta_{\sigma\tau}$ for all $\alpha\in I$. Since $\tilde{e}_\sigma$ on $U_\alpha$ is obtained by the parallel transport for a metric connection, we get  $h_{\sigma\tau}=\delta_{\sigma\tau}$ on each $U_\alpha$ and, hence, $\tilde{\Gamma}_{i\sigma}^\rho=-\tilde{\Gamma}_{i\rho}^\sigma$, cp. \eqref{h_ode}. 

\begin{rem}\label{rem_vec_bddgeo_Eich}
In \cite[Section 1.A.1]{Eichb} one can find another definition of $E$ being of bounded geometry: A hermitian or Riemannian vector bundle $(E, \nabla^E, \<.,.\>_E)$ over $(M,g)$ with metric connection is of bounded geometry, if $(M,g)$ is of bounded geometry and if the curvature tensor of $E$ and all its covariant derivatives are uniformly bounded. 
In \cite[Theorem B]{Eich} it was shown that bounded geometry of $E$ in the sense of \cite[Section 1.A.1]{Eichb} is equivalent to
the following condition:

For all $k\in \mN_0$ there is a constant $C_k$ such that for all $\alpha\in I$, $1\leq i\leq n$, $1\leq \rho,\sigma\leq r$ and all multi-indices $\a$ with $|\a|\leq k$,
\begin{equation} |\uD^\a \tilde{\Gamma}_{i\rho}^\sigma|\leq C_k,\label{coord_E_bdd_Gamma}\end{equation}  where $\tilde{\Gamma}_{i\rho}^\sigma$ denote the Christoffel symbols with respect to $\xi^{\rm geo}_\alpha$.
\end{rem}

Our next aim is to compare the two definitions of bounded geometry of $E$ given above: 

\begin{thm}
Let $(E, \nabla^E, \<.,.\>_E)$ be a hermitian or Riemannian vector bundle over a Riemannian manifold $(M,g)$. Let $(M,g)$ be of bounded geometry, and let $\nabla^E$ be a metric connection. Moreover, let  $\uA_E^{\rm geo}=(U^{\rm geo}_\alpha, \kappa^{\rm geo}_\alpha, \xi^{\rm geo}_\alpha)_{\alpha\in I}$ be a geodesic atlas of $E$, see Definition \ref{def_syn_geo}. Then, $E$ together with $\uA_E^{\rm geo}$ is of bounded geometry in the sense of Definition \ref{def_vec_bddgeo} if, and only if, it is of bounded geometry in the sense of \cite[Section 1.A.1]{Eich}, cf. Remark \ref{rem_vec_bddgeo_Eich}.
\end{thm}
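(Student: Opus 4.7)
The plan is to derive a first-order matrix identity linking $\tilde{\mu}_{\alpha\beta}$ to the Christoffel symbols of $\nabla^E$ in the two synchronous trivializations, and then to run it in both directions by means of Lemma~\ref{flow_lem}. By Remark~\ref{rem_vec_bddgeo_Eich} the Eichhorn condition is equivalent to uniform $C^k$-boundedness, for every $k\in\mN_0$, of the Christoffel symbols $\tilde{\Gamma}_{i\rho}^{\sigma}$ of $\nabla^E$ in each trivialization $\xi^{\rm geo}_\alpha$, while Definition~\ref{def_vec_bddgeo} asks the same uniform $C^k$-bound for the transition functions $\tilde{\mu}_{\alpha\beta}$; I therefore have to show the two sets of bounds are equivalent. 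Covariantly differentiating $\tilde{e}^\alpha_\rho=(\tilde{\mu}_{\alpha\beta})^\sigma_\rho\,\tilde{e}^\beta_\sigma$ along $e^\alpha_i=\partial_i\mu^j_{\alpha\beta}\,e^\beta_j$ and inserting the Christoffel representations of $\nabla^E$ yields, in matrix form,
\beq\label{prop:key}
\partial_i\tilde{\mu}_{\alpha\beta}(x)=\tilde{\Gamma}^\alpha_i(x)\,\tilde{\mu}_{\alpha\beta}(x)-\tilde{\mu}_{\alpha\beta}(x)\,\partial_i\mu^j_{\alpha\beta}(x)\,\tilde{\Gamma}^\beta_j(\mu_{\alpha\beta}(x)),
\eeq
where $\tilde{\Gamma}^\alpha_i,\tilde{\Gamma}^\beta_j$ denote the Christoffel-symbol matrices in the respective trivializations. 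Since $\tilde{e}^\alpha$ is parallel along every radial geodesic $s\mapsto\kappa^{\rm geo}_\alpha(sx)$ (whose $\alpha$-tangent is $x^i e^\alpha_i$), the synchronous-gauge identity $x^i\tilde{\Gamma}^\alpha_i(sx)=0$ holds, and analogously for $\beta$.

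For the forward direction (Eichhorn $\Rightarrow$ Definition~\ref{def_vec_bddgeo}) I contract \eqref{prop:key} with $x^i$ along the ray $s\mapsto sx$; the first term on the right vanishes by the synchronous gauge, leaving the $s$-ODE
\[
\frac{d}{ds}\tilde{\mu}_{\alpha\beta}(sx)=-\tilde{\mu}_{\alpha\beta}(sx)\cdot x^i\partial_i\mu^j_{\alpha\beta}(sx)\cdot\tilde{\Gamma}^\beta_j(\mu_{\alpha\beta}(sx)),\qquad s\in[0,1],
\]
with parameter $x$ and initial datum $\tilde{\mu}_{\alpha\beta}(0)$, the change-of-basis matrix between two orthonormal frames of $E_{p_\alpha}$ (hence lying in a compact group, uniformly bounded). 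The coefficients are uniformly $C^k$-bounded in $x$: $\partial\mu_{\alpha\beta}$ by Remark~\ref{rem_bdgeom}.ii (bounded geometry of $M$), and $\tilde{\Gamma}^\beta\circ\mu_{\alpha\beta}$ by the Eichhorn hypothesis. Applying Lemma~\ref{flow_lem} with $s$ as the flow parameter and $x$ as a passive parameter furnishes uniform $C^k$-bounds in $x$ on the solution at $s=1$, i.e.\ on $\tilde{\mu}_{\alpha\beta}(x)$.

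For the reverse direction (Definition~\ref{def_vec_bddgeo} $\Rightarrow$ Eichhorn) I solve \eqref{prop:key} for $\tilde{\Gamma}^\alpha_i$, obtaining
\[
\tilde{\Gamma}^\alpha_i(x)=\partial_i\tilde{\mu}_{\alpha\beta}(x)\,\tilde{\mu}_{\alpha\beta}(x)^{-1}+\tilde{\mu}_{\alpha\beta}(x)\,\partial_i\mu^j_{\alpha\beta}(x)\,\tilde{\Gamma}^\beta_j(\mu_{\alpha\beta}(x))\,\tilde{\mu}_{\alpha\beta}(x)^{-1}.
\]
The first summand is uniformly $C^k$-bounded by the hypothesis (applied to both $\tilde{\mu}_{\alpha\beta}$ and to $\tilde{\mu}_{\alpha\beta}^{-1}=\tilde{\mu}_{\beta\alpha}$). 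At the discrete grid of points $x=(\kappa^{\rm geo}_\alpha)^{-1}(p_\beta)$ one has $\mu_{\alpha\beta}(x)=0$ and, by the synchronous gauge at $\beta$, $\tilde{\Gamma}^\beta(\mu_{\alpha\beta}(x))=\tilde{\Gamma}^\beta(0)=0$, so the second summand drops out and a punctual $C^0$-bound on $\tilde{\Gamma}^\alpha$ is immediate. To propagate this to every $x\in V^{\rm geo}_\alpha$ I invoke the Poincar\'e-type representation valid in radial gauge,
\[
\tilde{\Gamma}^\alpha_i(x)=-\int_0^1 s\,x^j R^{E,\alpha}_{ji}(sx)\,ds,
\]
which reduces $C^k$-control of $\tilde{\Gamma}^\alpha$ to $C^k$-control of the curvature matrix $R^{E,\alpha}$. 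The curvature transforms tensorially under change of trivialization through $\partial\mu_{\alpha\beta}$ and $\tilde{\mu}_{\alpha\beta}$, hence its uniform $C^k$-boundedness is atlas-invariant; and at each center $p_\beta$ it is expressible, via second $x$-derivatives of \eqref{prop:key} at $x=(\kappa^{\rm geo}_\alpha)^{-1}(p_\beta)$ combined with the vanishing of $\tilde{\Gamma}^\beta(0)$, in terms of higher derivatives of the transition function which are bounded by the hypothesis. A bootstrap on $k$ then produces the uniform bound on $R^{E,\alpha}$, and thus on $\tilde{\Gamma}^\alpha$.

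The main obstacle is precisely this reverse direction: the identity \eqref{prop:key} does not decouple $\tilde{\Gamma}^\alpha$ from $\tilde{\Gamma}^\beta$, so one cannot read off the Christoffel symbols directly from the transition bounds. The crucial ingredient closing the argument is the simultaneous vanishing of both Christoffel symbols at their respective synchronous centers, which at the discrete grid $\{(\kappa^{\rm geo}_\alpha)^{-1}(p_\beta)\}$ turns \eqref{prop:key} into a direct punctual formula for $\tilde{\Gamma}^\alpha$; combined with the radial-gauge Poincar\'e representation and the tensoriality of the curvature this bootstraps to the full $C^k$-bound.
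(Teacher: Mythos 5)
Your gauge--transformation identity for $\partial_i\tilde{\mu}_{\alpha\beta}$ is correct, and your forward direction is sound: contracting with $x^i$ and using the radial--gauge relation $x^i\tilde{\Gamma}^\alpha_i(x)=0$ turns $\tilde{\mu}_{\alpha\beta}$ into the solution of a linear ODE along the ray whose coefficients are controlled by the Eichhorn hypothesis and by the bounded geometry of $M$, and Lemma \ref{flow_lem} (applied to the system augmented by $\dot{x}=0$, so that the passive parameter becomes part of the initial data) gives the uniform $C^k$-bounds. This is essentially the paper's own argument for that implication, phrased for the transition matrix instead of the parallel frame; like the paper, you tacitly assume $p_\alpha\in U^{\rm geo}_\beta$ so the ray can be integrated from $s=0$ (the paper notes one must pass to a refined atlas and compose charts otherwise).

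The reverse direction has a genuine gap, located exactly where you place your ``crucial ingredient''. The punctual formula $\tilde{\Gamma}^\alpha_i(x_0)=\partial_i\tilde{\mu}_{\alpha\beta}(x_0)\,\tilde{\mu}_{\alpha\beta}(x_0)^{-1}$ at $x_0=(\kappa^{\rm geo}_\alpha)^{-1}(p_\beta)$ is correct, but it only lives on the discrete set of chart centers, and neither of your propagation devices reaches the remaining points. First, the claim that $R^{E,\alpha}$ at a center is expressible through second derivatives of the transition function is vacuous: differentiating your identity at $x_0$, using $\tilde{\Gamma}^\beta(0)=0$ and antisymmetrizing, the unknown $\partial\tilde{\Gamma}^\beta(0)$ reassembles into $R^{E,\beta}(0)$ and the relation collapses to the tensorial law $R^{E,\alpha}(x_0)=\tilde{\mu}\,\partial\mu\,\partial\mu\,R^{E,\beta}(0)\,\tilde{\mu}^{-1}$ --- a tautology determining neither side. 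This is forced: curvature is gauge--covariant, so it must cancel from any relation between two gauges of the same connection. What the pair $(\alpha,\beta)$ actually yields at $x_0$, beyond $\tilde{\Gamma}^\alpha(x_0)$ itself, is only the symmetric part of $\partial\tilde{\Gamma}^\alpha(x_0)$ and the radial contraction $x_0^jR^{E,\alpha}_{ji}(x_0)$, never the full curvature. Second, tensoriality relates the curvature at the \emph{same} point in different gauges; it cannot transport a bound from the discrete grid of centers to an arbitrary $x$, so the radial--gauge Poincar\'e representation $\tilde{\Gamma}^\alpha_i(x)=-\int_0^1 s\,x^jR^{E,\alpha}_{ji}(sx)\,\mathrm{d}s$ cannot be fed. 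The structural point is that one synchronous trivialization records the connection only in its own radial direction, so a single pair of charts can never reconstruct the full connection off the centers. The paper's proof supplies exactly the missing ingredient: for each $q$ it places centers at $p_i=\exp^M_q(\tfrac{r}{2}v_i)$ for a spanning set $\{v_i\}\subset T_qM$, uses the $n$ parallelism relations $\nabla^E_{v_i}\tilde{e}^i_\sigma=0$ near $q$, and solves the resulting uniformly invertible linear system for the Christoffel symbols in a trivialization over $q$, with coefficients controlled by the transition--function bounds. Without such a triangulation your reverse implication does not close.
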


\begin{proof}
 
 Let $\alpha, \beta \in I$. For simplicity, we assume that $p_\alpha, p_\beta \in U^{\rm geo}_\alpha\cap U^{\rm geo}_\beta$. Then, for all $z\in   U^{\rm geo}_\alpha\cap U^{\rm geo}_\beta$ the geodesics joining $z$  with $p_\alpha$ and $p_\beta$, respectively, are completely contained in $ U^{\rm geo}_\alpha\cap U^{\rm geo}_\beta$. For atlases not satisfying this assumption, one can switch to a refined atlas { and use the composition of pairs of charts, each pair satisfying the assumption from above.}
 
 Let $v\in T_{p_\alpha}M$ and let $X_\rho^ v(t)$ and $c_v(t)$ be defined as above.  Let $\tilde{\Gamma}_{i\rho}^\sigma$ be the Christoffel symbols for $\nabla^E$ with respect to $\xi_\beta^{\rm geo}$. We put $Y^v(t)=(\xi^{\rm geo}_\beta)^{-1}X^{v}(t)$. Moreover, let {$\Phi_1(t):=\Phi_1(t,0,(\kappa_\beta^{\rm geo})^{*}(v))=(\kappa_\beta^{\rm geo})^{-1} c_v(t)$} be the geodesic flow on $(U^{\rm geo}_\beta, \kappa^{\rm geo}_\beta)$. Then the initial value problem $\nabla^E_{\dot{c}_v} X^v_\rho=0$ with $X^v_\rho(0)=\tilde{e}_\rho(p_\alpha)$ reads in local coordinates as $\partial_t Y^v_\sigma+ \dot{\Phi}_1^i\tilde{\Gamma}_{i\sigma}^\rho Y^v_\rho=0$ with $Y^v_\sigma(0)= (\xi^{\rm geo}_\beta)^{-1} \tilde{e}_\sigma(p_\alpha)$.  We denote the corresponding flow by $\Phi^v(t, Y^v_\rho(0))$ and have {
 $\xi_\alpha^{\rm geo}(x,u)= \xi_\beta^{\rm geo} \left( \sum_{i=1}^r u^\rho \Phi^{\lambda_\alpha(x)} (1, {(\xi_\beta^{\rm geo})^{-1}}\tilde{e}_\rho(p_\alpha) \right)$.}  
 By \eqref{coord_E_bdd_Gamma}, $\tilde{\Gamma}_{i\rho}^\sigma$ and all its derivatives are uniformly bounded. Moreover, the same is true for the geodesic flow $\dot{\Phi}_1$ since $(M,g)$ is of bounded geometry. Then by Lemma \ref{flow_lem}, $E$ together with $\uT^{\rm geo}$ is bounded in the sense of Definition \ref{def_vec_bddgeo}.

Conversely, let $E$ be a vector bundle of bounded geometry in the sense of Definition \ref{def_vec_bddgeo}.  Since $\xi_\alpha^{\text{geo}}$ is a synchronous trivialization, $h_{\rho\sigma}=\delta_{\rho\sigma}$, see Definition \ref{def_syn_geo} and below. Let now $p$ be any point in $M$ and $\kappa$  geodesic coordinates on a ball around $p$ with radius $r$. Let $V$ be a unit radial vector field starting at $p$. Then its derivatives are uniformly bounded at distances between $\frac{r}{10}$ and $r$ from $p$, since $(M,g)$ is of bounded geometry. For a point $q\in M$, let $v_i$ be $n$ unit vectors that span $T_qM$. We set $p_i=\exp^M_q (\frac{r}{2}v_i)$. Let $(\tilde{e}_\sigma^i(p_i))_\sigma$ be an orthonormal frame of $E_{p_i}$. We consider geodesic normal coordinates and a synchronous trivialization around those $p_i$. 
{Moreover, let $\hat{v}_i$ be the vector $v_i$ parallel transported to $p_i$ along $c_{v_i}$.} Since the transition functions of $E$ and all its derivatives are uniformly 
bounded, $\tilde{e}_\sigma^ i(\exp^M_{p_i}(-t\hat{v}_i)$ and its derivatives are uniformly bounded for $t\in (\frac{r}{10},r)$. 
In particular,  uniformly bounded means in this context, that the bound may depend on the order of the derivatives but not on $i$. Moreover, since the synchronous trivialization is defined by parallel transport along radial geodesics, {we have $\nabla^E_{v_i} \tilde{e}^i_\sigma(q)=0$ for all $i$ and $\sigma$.}
For a synchronous trivialization over $q$, those equations give a linear system on the Christoffel symbols $\tilde{\Gamma}_{l\tau}^\rho$, whose coefficients are polynomials in the components of  $\tilde{e}^i_\sigma(q)$, their first derivatives and $v_i$ with respect to the geodesic coordinates around $q$. But those are uniformly bounded as explained above. Moreover, by construction, this system has a unique solution. Hence, the Christoffel symbols and all its derivatives in the synchronous trivialization around $q$ are uniformly bounded.  
\end{proof}

\begin{ex}
\begin{itemize}
 \item[(i)] Let $(M,g)$ be a Riemannian manifold of bounded geometry. Then its tangent bundle equipped with its Levi-Civita connection is trivially of bounded geometry.
 \item[(ii)] Let $(M,g)$ be a Riemannian spin manifold of bounded geometry with chosen spin structure, i.e. we have chosen a double cover $P_{\mathrm{spin}}M$ of the oriented orthonormal frame bundle such that it is compatible to the double covering $\mathrm{Spin}(n)\to \mathrm{SO}(n)$, cf. \cite[Section 1.5 and 2.5]{Frie}.  We denote by $S=P_{\mathrm{spin}}(M)\times_\kappa \mC^{\left[\frac{n}{2}\right]} $ the associated spinor bundle,  
where $\kappa: \mathrm{Spin}(n) \to U(\mC^{\left[\frac{n}{2}\right]})$ is the spin representation, cf. \cite[Section 2.1]{Frie}. The connection on $S$ is induced by the Levi-Civita connection on $M$. Hence, the 
Riemannian curvature of $S$ and all its covariant derivatives are uniformly bounded. 
In this spirit, any natural vector bundle $E$ over a manifold of bounded geometry equipped with a geodesic trivialization of $E$ is of bounded geometry.
 \item[(iii)]  Let $(M,g)$ be a Riemannian $\mathrm{Spin}^{\mC}$ manifold of bounded geometry. Here the spinor bundle $S$ described above may not exist globally (but it always exists locally). But a $\mathrm{Spin}^{\mC}$-structure assures the existence of a $\mathrm{Spin}^{\mC}$-bundle $S'$ that is a hermitian vector bundle of rank $2^{\left[\frac{n}{2} \right]}$, endowed with a natural scalar product and with a connection $\nabla^{S'}$ that parallelizes the metric. Moreover, the $\mathrm{Spin}^{\mC}$-bundle is endowed with a Clifford
multiplication denoted by '$\cdot$', where $\cdot: TM \to \text{End}_{\mC}(S')$ is such that at every point $x\in M$
'$\cdot$' defines an irreducible representation of the corresponding Clifford algebra. The determinant line bundle $\text{det}\, S'$ has a root of index $2^{\left[\frac{n}{2}\right]-1}$ -- denoted by $L$ and called the auxiliary line bundle associated to the $\mathrm{Spin}^{\mC}$-structures, \cite[Section 2.5]{Frie}. The square root of $L$ always exists locally but $S'=S\otimes L^\frac{1}{2}$ is defined even globally, \cite[Appendix D]{Frie}. The connection on $S'$ is the twisted connection of the one on the spinor bundle coming from the Levi-Civita connection (as described in (ii)) and a connection on $L$. Hence, for $S'$ being of bounded geometry, we  not only need that $(M,g)$ is a bounded geometry but also that the curvature of the auxiliary line bundle and its covariant derivatives are uniformly bounded.
\end{itemize} 
\end{ex}

\subsection{Sobolev spaces on vector bundles}

We start with two definitions of Sobolev spaces on vector bundles $E$ over $M$. The first one is for vector bundles of bounded geometry only.

\begin{defi}\cite[Section A1.1]{Shu} Let $E$ with trivialization  $\uT_E=(U^{\rm geo}_\alpha, \kappa^{\rm geo}_\alpha, \xi_\alpha, h^{\rm geo}_\alpha)_{\alpha\in I}$  be a vector bundle of bounded geometry over a Riemannian manifold $(M^n, g)$. Then, for $s\in \real$, $1<p<\infty$, the Sobolev space $\mathcal{H}_p^s(M,E)$ contains all distributions $\phi\in \mathcal{D}'(M,E)$ with \[\Vert\phi\Vert_{\mathcal{H}_p^s(M,E)}:= \left( \sum_{\alpha\in I} \Vert \xi_\alpha^* (h^{\rm geo}_\alpha \phi)\Vert_{H_p^s(\mR^n, \mC^r)}^p \right)^\frac{1}{p}<\infty,\]
where $r$ is the rank of $E$.
\end{defi}

Let $E$ be a hermitian or Riemannian vector bundle over a complete Riemannian manifold $(M,g)$ of rank $r$ with fiber product $\<.,.\>_E$ and  connection $\nabla^E: \Gamma(TM)\otimes \Gamma(E)\to \Gamma(E)$. \\
In general, $\xi_\alpha$ and $\nabla^E$  have nothing to do with each other. But one can alternatively use the connection in order to define Sobolev spaces: 
For $k\in \mN_0$, $1<p<\infty$, let the $W_p^k(M,E)$-norm be defined by
\[ \Vert \phi\Vert_{{W}_p^k(M,E)}^p=\sum_{i=1}^k \int_M | \underbrace{\nabla^E\cdots \nabla^E}_{i\ \mathrm{times}} \phi |^p \vo_g\ \quad \mathrm{for}\ \phi\in \mathcal{D}(M,E).\]
Then the space ${W}_p^k(E)$ is defined to be the completion of $\mathcal{D}(M,E)$ with respect to the ${H}_p^k(M,E)$-norm.

\begin{thm}Let $(E, \nabla^E, \<.,.\>_E)$ be of bounded geometry. In case that $\xi_\alpha$ is the synchronous trivialization along geodesic normal coordinates $W_p^k(M,E)=\mathcal{H}_p^k(M,E)$ for all $k\in \mN_0$ and $1<p<\infty$. 
\end{thm}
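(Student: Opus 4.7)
Since both $W_p^k(M,E)$ and $\mathcal{H}_p^k(M,E)$ are by definition completions of $\mathcal{D}(M,E)$, it suffices to establish equivalence of the two norms on this common dense subset. The plan is to localize with the partition of unity $(h_\alpha^{\rm geo})$ subordinated to the uniformly locally finite geodesic cover, and to pull each chart back to Euclidean space via $\kappa_\alpha^{\rm geo}$ together with the synchronous frame $(\tilde{e}_\rho^\alpha)$. Writing $\phi = \phi^\rho \tilde{e}_\rho$ in such a chart, the synchronous property yields $h_{\rho\sigma}=\delta_{\rho\sigma}$, so the fibre product degenerates to the standard inner product on $\mF^r$, and Lemma \ref{vec_norm_equ} identifies the $\mathcal{H}_p^k$-norm with the sum $\sum_\rho\|(h_\alpha^{\rm geo}\phi^\rho)\circ\kappa_\alpha^{\rm geo}\|_{H_p^k(\R^n)}^p$ over $\alpha$ and $\rho$.

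The core of the proof is a uniform pointwise comparison on every chart. Bounded geometry of $(M,g)$ gives uniform bounds on $g_{ij}, g^{ij}$ and all their derivatives (Remark \ref{rem_bdgeom}.iii), hence, by \eqref{Christ_coord}, also on the Christoffel symbols $\Gamma_{ij}^k$ and their derivatives; bounded geometry of $E$ gives the analogous uniform bounds on $\tilde\Gamma_{i\sigma}^\rho$ and its derivatives. Moreover $\sqrt{\det g_{ij}}$ is bounded away from $0$ and $\infty$, so $\vo_g$ and Lebesgue measure on $V_\alpha^{\rm geo}$ are comparable with universal constants, and the pulled-back partition $h_\alpha^{\rm geo}\circ\kappa_\alpha^{\rm geo}$ has uniformly bounded derivatives of every order (Example \ref{geod_triv}).

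Iterating the formula $\nabla^E_{e_i}(\phi^\sigma\tilde{e}_\sigma)=(\partial_i\phi^\rho+\tilde\Gamma_{i\sigma}^\rho\phi^\sigma)\tilde{e}_\rho$ together with the corresponding formula for the induced connection on $T^*M^{\otimes i}\otimes E$ (which brings in the $\Gamma_{ij}^k$), one expresses the components of $\underbrace{\nabla^E\cdots\nabla^E}_{i}\phi$ as polynomial expressions in $\uD^\beta \phi^\rho$, $|\beta|\le i$, whose coefficients are polynomials in $g^{ij},\Gamma_{ij}^k,\tilde\Gamma_{i\sigma}^\rho$ and their derivatives. By the previous paragraph, all such coefficients are uniformly bounded, which yields pointwise in each chart
\[
\big|\underbrace{\nabla^E\cdots\nabla^E}_{i}\phi\big|^p \;\lesssim\; \sum_{\rho}\sum_{|\beta|\le i}|\uD^\beta\phi^\rho|^p.
\]
The reverse pointwise bound is obtained by induction on $k$: the principal part of the $k$-fold iterated connection applied to $\phi^\rho\tilde{e}_\rho$ is $\uD^k\phi^\rho$ (with identity coefficient), so one may solve for $\uD^k\phi^\rho$ in terms of $(\nabla^E)^k\phi$ plus lower-order expressions already controlled by the induction hypothesis.

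Putting everything together, integrating these pointwise estimates against $\vo_g$, changing to Lebesgue measure via $\kappa_\alpha^{\rm geo}$, inserting the factor $h_\alpha^{\rm geo}$ and using Lemma \ref{Sob_Rn}(i) to commute multiplication by $h_\alpha^{\rm geo}\circ\kappa_\alpha^{\rm geo}$ past $\uD^\beta$ on $\R^n$, and finally summing over $\alpha$ using uniform local finiteness of the cover, delivers the two-sided norm equivalence. I expect the main obstacle to be purely bookkeeping, namely ensuring that all implicit constants remain uniform in $\alpha$ throughout both the covariant-derivative expansion and the commutation with $h_\alpha^{\rm geo}$; this uniformity reduces entirely to the bounded geometry hypotheses on $(M,g)$ and on $E$ together with the uniform chart estimates of Example \ref{geod_triv}, and presents no genuinely new difficulty.
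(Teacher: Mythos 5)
Your proposal is correct and follows essentially the same route as the paper: localization via the uniformly locally finite geodesic cover and partition of unity, expression of the iterated covariant derivatives as coordinate derivatives of the components with coefficients polynomial in $g_{ij}$, $g^{ij}$, the Christoffel symbols of $M$ and $E$ and their derivatives (all uniformly bounded by the bounded geometry hypotheses), inversion via the leading-order term, and summation using uniform local finiteness, concluding by density of $\mathcal{D}(M,E)$. The only cosmetic differences are that the paper records the leading coefficients as $g^{i_1j_1}\cdots g^{i_kj_k}$ where you normalize them to the identity, and that $\mathcal{H}^k_p(M,E)$ is defined in the paper by finiteness of the norm rather than as a completion, so strictly speaking one concludes exactly as the paper does, from the two-sided norm equivalence on $\mathcal{D}(M,E)$ together with its density in $W^k_p(M,E)$.
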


\begin{proof} We briefly sketch the proof which is straightforward.  Let $\phi\in \mathcal{D}(U_\alpha, E|_{U_\alpha})$. By induction we have  
\[ ((\nabla^E)^k\phi)_{i_1,\ldots,i_k}:= \xi_\alpha^* \left( \nabla^E_{e_{i_k}}\cdots \nabla^E_{e_{i_1}} \phi\right)  = \sum_{l\leq k} d_{j_1,\ldots, j_l} \partial_{j_1}\cdots \partial_{j_l} (\xi_\alpha^*(\phi)),\] where the coefficients $d_{j_1,\ldots, j_l}$ are itself polynomials in $g_{ij}$, $g^{ij}$, $\tilde{\Gamma}_{i\sigma}^\rho$ and their derivatives (and depend on $i_1,\ldots, i_k$).  Moreover, again by induction, one has that the coefficients of the leading terms, i.e., $l=k$, are given by 
$d_{j_1,\ldots, j_k}=g^{i_1j_1}\cdots g^{i_kj_k}$.

By Remark \ref{rem_bdgeom}.iii, all those coefficients are uniformly bounded. Moreover, using the fact that $\xi_\alpha$ is obtained by synchronous trivialization, we have $h_{\rho\sigma}=\delta_{\rho\sigma}$, see below Definition \ref{def_syn_geo}. Hence, there are constants $\tilde{C_k}>0$ with 
\begin{align*} |(\nabla^E)^k \phi|_E^2=& \sum_\sigma g^{i_1j_1} \cdots g^{i_kj_k}((\nabla^E)^k \phi)^\sigma_{i_1, \ldots, i_k}((\nabla^E)^k \phi)^\sigma_{j_1, \ldots, j_k}\circ \kappa_\alpha^{-1}\\
\leq& \tilde{C}_k \sum_{\gamma_l; |\gamma_l|\leq k; l\in{1,2}} |D^{\gamma_1} (\xi_\alpha^*(\phi))||D^{\gamma_2} (\xi_\alpha^*(\phi))| 
\end{align*}

  for all $\alpha$ and all $\phi\in \mathcal{D}(U_\alpha, E|_{U_\alpha})$. 
Together with a uniform upper bound on $\det g_{ij}$ which  follows again from Remark \ref{rem_bdgeom}.iii,  we obtain 
$\Vert (\nabla^E)^k \phi (e_{i_1}, \ldots, e_{i_k})\Vert_{L_p(U_\alpha, E|_{U_\alpha})}\leq \tilde{C} \sum_{\gamma, |\gamma|\leq k} \Vert D^\gamma (\xi_\alpha^*(\kappa_\alpha))\Vert_{L_p(V_\alpha, \mF^r)}$.\\
On the other hand, by the remark on the leading coefficients $d$ from above 
\[g_{i_1m_1}\cdots g_{i_km_k} \xi_\alpha^*((\nabla^E)^k \phi (e_{i_1}, \ldots, e_{i_k}))= \partial_{m_1}\cdots \partial_{m_k} (\xi_\alpha^*(\phi)) + \mathrm{terms\ with\ lower\ order\ derivatives}.\]
Thus, as above
\[  \partial_{m_1}\cdots \partial_{m_k}(\xi_\alpha^*(\phi))=\sum_{l\leq k} d'_{i_1, \ldots, i_l} \xi_\alpha^*( (\nabla^E)^l \phi(e_{i_1}, \ldots, e_{i_l})).\]

where the functions $d'_{i_1, \ldots, i_l}: V_\alpha \to \mathbb{R}$ are again polynomials in $g_{ij}$, $g^{ij}$, $\tilde{\Gamma}_{i\rho}^\sigma$ and their derivatives.

In the same way as before we obtain
\[\Vert \xi_\alpha^*(\phi)\Vert_{H_p^s(V_\alpha, \mF^r)}\leq C \sum_{l\leq k} \Vert (\nabla^E)^l \phi\Vert_{L_p(U_\alpha, E|_{U_\alpha})}.\]

 Let now $\phi\in \mathcal{D}(M, E)$. Then, using Example \ref{geod_triv}, the uniform local finiteness of the cover and the local inequalities from above we see that for  $k\in \mathbb{N}_0$,

\begin{align*}
\big\Vert (\nabla^E)^k \phi\big\Vert_{L_p(M,E)}^p =&  \sum_{\alpha\in I} \big\Vert \xi_\alpha^*(  h_\alpha^{\rm geo} (\nabla^E)^k \phi) \big\Vert_{L_p(V_\alpha,\mF^r)}^p \\
=&\sum_{\alpha\in I} \Big\Vert \xi_\alpha^*\left(  (\nabla^E)^k (h_\alpha^{\rm geo} \phi) - \sum_{i=1}^k {k\choose i}(\nabla^M)^ih_\alpha^{\rm geo} (\nabla^E)^{k-i} \phi \right)\Big\Vert_{L_p(V_\alpha,\mF^r)}^p\\
\lesssim& \sum_{\alpha\in I} \left( \Vert  (\nabla^E)^k (h_\alpha^{\rm geo} \phi)\Vert_{L_p(U_\alpha,E|_{U_\alpha})}^p + \sum_{i=1}^k \Vert (\nabla^M)^ih_\alpha^{\rm geo} (\nabla^E)^{k-i} \phi\Vert_{L_p(U_\alpha,E|_{U_\alpha})}^p\right)\\
\lesssim& \sum_{\alpha\in I}  \Vert  (\nabla^E)^k (h_\alpha^{\rm geo} \phi)\Vert_{L_p(U_\alpha,E|_{U_\alpha})}^p + \sum_{i=1}^k \Vert (\nabla^E)^{k-i} \phi \Vert_{L_p(M,E)}^p \\
\lesssim& \sum_{\alpha\in I}  \Vert  \xi_\alpha^*(h_\alpha^{\rm geo} \phi)\Vert_{H_p^k(\mathbb{R}^n, \mF^r)}^p + \sum_{i=1}^k  \Vert (\nabla^E)^{k-i} \phi\Vert_{L_p(M,E)}^p. 
\end{align*}

Using this estimate inductively, there is a constant $C'>0$ with 
\[\Vert \phi\Vert_{W_p^k(M,E)}\leq C' \Big(\sum_{\alpha\in I} \Vert  \xi_\alpha^*(h_\alpha^{\rm geo} \phi)\Vert^p_{H_p^k(\mathbb{R}^n, \mF^r)}\Big)^{1/p}. \]

On the other hand, 

\begin{align*}
\sum_{\alpha\in I} \Vert \xi_\alpha^*(h_\alpha^{\rm geo} \phi)\Vert^p_{H_p^k(\mathbb{R}^n,\mF^r)}&\lesssim  \sum_{\alpha\in I} \sum_{i=0}^k \Vert  (\nabla^E)^i(h^{\rm geo}_\alpha \phi)\Vert_{L_p(U_\alpha, E|_{U_\alpha})}^p
 \lesssim \sum_{\alpha\in I} \sum_{i=0}^k\sum_{j=0}^i \Vert  (\nabla^M)^j h_\alpha^{\rm geo} (\nabla^E)^{i-j}\phi\Vert_{L_p(U_\alpha, E|_{U_\alpha})}^p\\
& \lesssim  \sum_{\alpha\in I} \sum_{i=0}^k\sum_{j=0}^i \Vert  (\nabla^E)^{i-j} \phi\Vert_{L_p(U_\alpha, E|_{U_\alpha})}^p
 \lesssim  \sum_{\alpha\in I}  \Vert  \phi\Vert_{W_p^k(U_\alpha, E|_{U_\alpha})}^p\lesssim  \Vert  \phi\Vert_{W_p^k(M, E)}^p.
\end{align*}
The coincidence of the corresponding spaces follows since $\mathcal{D}(M, E)$  is dense in  $W^k_p(M,E)$  for $k\in \mN$, cf. \cite[Theorem~4.3]{strich}.
\end{proof}

The above considerations give rise to the following definition.

\begin{defi} \label{def_vec_norm_bddgeo}
Let $(E, \nabla^E, \<.,.\>_E)$ be of bounded geometry. In case that $\xi_\alpha$ is the synchronous trivialization along geodesic normal coordinates we set {$H_p^s(E):=W_p^s(M,E):=\mathcal{H}_p^s(M,E)$} for all $s\in \real$ and $1<p<\infty$.

\end{defi}

\subsection{Sobolev norms on vector bundles of bounded geometry via trivializations}

As for Sobolev spaces on manifolds we look for 'admissible' trivializations of a vector bundle $E$ such that the resulting Sobolev norms are equivalent to those  obtained when using a geodesic trivialization of $E$.

\begin{defi}\label{H-koord-vect}
 Let $E$ be a hermitian or Riemannian vector bundle  of rank $r$ over $(M^n,g)$ with a uniformly locally finite trivialization $\uT_E=(U_\alpha, \kappa_{\alpha},\xi_\alpha, h_{\alpha})_{\alpha\in I} $. Furthermore, let $s\in \real$ and $1< p<\infty$.  Then the space $H^{s,\uT_E}_p(E)$ contains all distributions $\phi\in \mathcal{D}'(M,E)$ such that 
\begin{align*}
\Vert \phi\Vert_{H^{s,\uT_E}_{p}(E)}:=\left(\sum_{\alpha\in I} \Vert (\xi_\alpha)^* (h_\alpha \phi)\Vert^p_{H^s_p(\mathbb{R}^n, \mF^r)}\right)^{\frac{1}{p}}
\end{align*}
is finite. 
\end{defi}

\begin{defi}\label{bddcoord2} Let $(E, \nabla^E, \<.,.\>_E)$ be a hermitian or Riemannian vector bundle of rank $r$ and of bounded geometry over a Riemannian manifold $(M^n,g)$. Let  $\uT_E=(U_\alpha,  \kappa_\alpha, \xi_\alpha, h_\alpha)_{\alpha \in I}$ be a uniformly locally finite trivialization of $E$. Using the notations from above, we say that $\uT_E$ is an admissible trivialization for $E$ if the following are fulfilled:
\begin{itemize}
 \item[(C1)]$\uT:=(U_{\alpha}, \kappa_\alpha, h_\alpha)_{\alpha\in I}$ is an admissible trivialization of $M$.
 \item[(C2)]$\uT_E$ is compatible with the synchronous trivialization along geodesic coordinates, i.e., for $\uA_E^{\rm geo}=(U^{\rm geo}_\beta, \kappa^{\rm geo}_\beta, \xi_\beta^{\rm geo})_{\beta \in J}$ being a geodesic atlas of $E$, cf. Definition \ref{def_syn_geo},
  there are constants $C_k>0$ for $k\in \mN_0$ such that for all $\alpha\in I$ and $\beta\in J$ with $U_{\alpha}\cap U^{\rm geo}_{\beta}\neq \varnothing$ and all $\a\in \mN_0^n$ with $|\a|\leq k$,  
  \[ |\uD^\a \tilde{\mu}_{\alpha\beta}|\leq C_k \qquad \text{\ and\ } \qquad |\uD^\a \tilde{\mu}_{\beta\alpha}|\leq C_k.\]
\end{itemize}
\end{defi}

For vector bundles of bounded geometry we have corresponding  results as on manifolds of bounded geometry. We start with the formulation of  the analog of Theorem \ref{indep_H}. The proof follows in the same way.

\begin{thm}\label{indep_H_vec} 
Let $E$ be a hermitian or Riemannian vector bundle over a Riemannian manifold $(M,g)$. Let $\uT_E=(U_{\alpha},\kappa_{\alpha},\xi_\alpha, h_\alpha)_{\alpha \in I}$ be an admissible trivialization of $E$. Furthermore, let $s\in \real$ and   $1<p<\infty$. Then, 
\beq\label{indep_H_1_vec}
H^{s,\uT_E}_{p}(E)=H^{s}_{p}(E). 
\eeq
\end{thm}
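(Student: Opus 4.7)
The plan is to mimic the proof of Theorem \ref{indep_H} in the vector-valued setting, with Lemma \ref{Sob_Rn} replaced by its $\mathbb{F}^r$-valued counterpart (componentwise via Lemma \ref{vec_norm_equ}) and with the admissibility conditions (C1), (C2) supplying all the uniform control needed. Fix the given admissible trivialization $\uT_E=(U_\alpha,\kappa_\alpha,\xi_\alpha,h_\alpha)_{\alpha\in I}$ and a geodesic trivialization $\uT_E^{\rm geo}=(U_\beta^{\rm geo},\kappa_\beta^{\rm geo},\xi_\beta^{\rm geo},h_\beta^{\rm geo})_{\beta\in J}$ of $E$ (so that $H_p^s(E)$ is computed with respect to $\uT_E^{\rm geo}$ by Definition \ref{def_vec_norm_bddgeo}). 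For $\alpha\in I$ set $A(\alpha)=\{\beta\in J:U_\alpha\cap U_\beta^{\rm geo}\neq\varnothing\}$; by uniform local finiteness $|A(\alpha)|$ is bounded independently of $\alpha$, and the roles of $\alpha,\beta$ are symmetric.

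The core local estimate is the following. Let $\phi\in H_p^s(E)$. Expanding via the partition of unity $h_\beta^{\rm geo}$ gives
\[
\xi_\alpha^*(h_\alpha\phi)=\sum_{\beta\in A(\alpha)}\xi_\alpha^*(h_\alpha h_\beta^{\rm geo}\phi),
\]
and on $\kappa_\alpha^{-1}(U_\alpha\cap U_\beta^{\rm geo})$ the relation between the two trivializations reads
\[
\xi_\alpha^*(h_\alpha h_\beta^{\rm geo}\phi)(x)=\bigl(h_\alpha h_\beta^{\rm geo}\circ\kappa_\alpha\bigr)(x)\,\tilde{\mu}_{\beta\alpha}\!\bigl(\mu_{\alpha\beta}(x)\bigr)\cdot\bigl[(\xi_\beta^{\rm geo})^*(\phi)\bigr]\!\bigl(\mu_{\alpha\beta}(x)\bigr),
\]
where $\mu_{\alpha\beta}=(\kappa_\beta^{\rm geo})^{-1}\circ\kappa_\alpha$. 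I will now apply, in turn, the diffeomorphism property (Lemma \ref{Sob_Rn}(ii) applied to each component, permissible by Lemma \ref{vec_norm_equ}) to absorb $\mu_{\alpha\beta}$ using (C1)=(B1), and then the pointwise multiplier property (Lemma \ref{Sob_Rn}(i), again componentwise) to absorb the scalar factor $h_\alpha h_\beta^{\rm geo}\circ\kappa_\alpha$ using (B2) and the matrix factor $\tilde{\mu}_{\beta\alpha}\circ\mu_{\alpha\beta}$ using (C2). Summing over $\alpha\in I$ and swapping the order of summation with $\sum_{\alpha,\beta\in A(\alpha)}=\sum_{\beta,\alpha\in A(\beta)}$ together with $|A(\beta)|\lesssim 1$ yields
\[
\Vert\phi\Vert_{H_p^{s,\uT_E}(E)}\lesssim\Vert\phi\Vert_{H_p^s(E)},
\]
and the reverse inequality follows by the same argument with the roles of $\uT_E$ and $\uT_E^{\rm geo}$ interchanged (noting that a geodesic trivialization is itself admissible, since (B1), (B2) are satisfied for $\uT^{\rm geo}$ by Example \ref{geod_triv} and (C2) holds with $\tilde{\mu}_{\beta\alpha}=\mathrm{Id}$ when $\beta=\alpha$; for arbitrary pairs one uses Remark \ref{rem_comp_T}(ii) extended to the vector-bundle transition functions).

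The main technical point I expect will be the handling of the matrix-valued pointwise multiplier $M_{\beta\alpha}:=\tilde{\mu}_{\beta\alpha}\circ\mu_{\alpha\beta}$. Since $H_p^s(\mathbb{R}^n,\mathbb{F}^r)$ is not literally covered by the scalar Lemma \ref{Sob_Rn}(i), the clean way is to reduce to scalars: by Lemma \ref{vec_norm_equ} it suffices to estimate each component of $M_{\beta\alpha}\cdot\psi$ as a finite linear combination $\sum_\sigma(M_{\beta\alpha})^\rho_{\ \sigma}\psi^\sigma$, where each entry $(M_{\beta\alpha})^\rho_{\ \sigma}$ has $\mathrm{D}^\a$-bounds independent of $\alpha,\beta$ up to any fixed order $k$ (from (C2) for $\tilde{\mu}_{\beta\alpha}$ combined with the chain rule applied along $\mu_{\alpha\beta}$, controlled by (C1)=(B1)). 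Applying Lemma \ref{Sob_Rn}(i) to each scalar entry and invoking Lemma \ref{vec_norm_equ} again to reassemble the norm closes the estimate with constants uniform in $\alpha$, which is precisely what is needed for the summation step. The diffeomorphism step is handled analogously, componentwise. No additional geometric input beyond (C1), (C2) is required.
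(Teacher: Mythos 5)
Your proposal is correct and follows exactly the route the paper intends: the paper's proof of Theorem \ref{indep_H_vec} consists solely of the remark that it "follows in the same way" as Theorem \ref{indep_H}, and your argument is precisely that proof carried out in detail, with the transition-function identity, the componentwise reduction via Lemma \ref{vec_norm_equ}, and the uniform bounds from (C1), (C2) supplying what the scalar case got from (B1), (B2).
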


\subsection{Trace Theorem for vector bundles}

\begin{defi}[\bf Synchronous trivialization along Fermi coordinates]\label{FC_vec} 
Let $(M,N)$ be of bounded geometry, and let $E$ be a hermitian or Riemannian vector bundle of bounded geometry over $M$.
Let $\uT^{FC}=(U_\gamma, \kappa_\gamma, h_\gamma)_{\gamma\in I}$ be a trivialization of $M$ using Fermi coordinates (adapted to $N$). We refer to   Section \ref{FC_sec} (also concerning the notation). In case that $\gamma\in I\setminus I_N$, we trivialize $E|_{U_\gamma}$ via synchronous trivialization along the underlying geodesic coordinates as described in Definition \ref{def_syn_geo}.  In case that $\gamma\in I_N$, we first trivialize $E|_{U_\gamma\cap N}$ along the underlying geodesic coordinates on $N$. Then, we trivialize by parallel transport along geodesics emanating at $N$ and being normal to $N$. The resulting trivialization is denoted by $\uT^{FC}_E=(U_\gamma, \kappa_\gamma, \xi_\gamma, h_\gamma)_{\gamma\in I}$.
\end{defi}
 
Next, we state corresponding results to Lemma \ref{FC-1} and Theorem \ref{trace-th}. 

\begin{lem}\label{FC_1_vec}
 The trivialization $\uT_E^{\rm FC}$ introduced in Definition \ref{FC_vec} fulfills condition (C2).
\end{lem}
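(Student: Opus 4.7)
The plan is to imitate the proof of Lemma \ref{FC-1}, but applied to the parallel-transport ODE defining synchronous trivializations instead of the geodesic ODE. Split into two cases according to $\gamma\in I\setminus I_N$ or $\gamma\in I_N$.

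For $\gamma\in I\setminus I_N$, the chart $\kappa_\gamma$ is already a geodesic normal chart and $\xi_\gamma$ is by definition the synchronous trivialization along it. Therefore, whenever $U_\gamma\cap U^{\rm geo}_\beta\neq\varnothing$, the transition function $\tilde\mu_{\gamma\beta}$ is exactly of the type appearing in the bounded-geometry assumption on $E$ (Definition \ref{def_vec_bddgeo}), so (C2) holds with the same constants $C_k$.

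For $\gamma\in I_N$, I would write $\xi_\gamma=\xi^{\rm FC}_\gamma$ as the flow of the parallel-transport ODE along the two-step path used in Definition \ref{FC_vec}: first a geodesic inside $N$ from $p^N_\gamma$ to a boundary point, then a normal geodesic away from $N$. In synchronous coordinates $\xi^{\rm geo}_\beta$ the parallel-transport equation reads
\[
\partial_t Y^\sigma+\dot c^i(t)\,\tilde\Gamma_{i\rho}^\sigma(c(t))\,Y^\rho=0,\qquad Y(0)=(\xi^{\rm geo}_\beta)^{-1}\tilde e_\rho(p^N_\gamma),
\]
where $c$ is the composite geodesic above, written in the $\kappa^{\rm geo}_\beta$-coordinates via the base-manifold transition function $(\kappa^{\rm geo}_\beta)^{-1}\circ\kappa_\gamma$. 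The composition of flows then gives
\[
\xi_\gamma^{\rm FC}(t,x,u)=\xi^{\rm geo}_\beta\Bigl(\sum_\rho u^\rho\,\Psi\bigl(t,x,(\xi^{\rm geo}_\beta)^{-1}\tilde e_\rho(p^N_\gamma)\bigr)\Bigr),
\]
so $\tilde\mu_{\gamma\beta}$ is a specific higher-order flow determined by this ODE with initial data given by a single synchronous frame vector at $p^N_\gamma$.

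The key step is to verify uniform bounds on all derivatives of this flow. Three inputs are available: (i) by bounded geometry of $E$ in geodesic synchronous frames, the Christoffel symbols $\tilde\Gamma_{i\rho}^\sigma$ and all their derivatives are uniformly bounded, cf.\ \eqref{coord_E_bdd_Gamma}; (ii) by Lemma \ref{FC-1} and its proof, the components of the velocity $\dot c$ of the composite geodesic expressed in the geodesic normal chart, as well as all its $(t,x)$-derivatives, are uniformly bounded (it is itself the output of applying Lemma \ref{flow_lem} to the two geodesic flows used in the definition of Fermi coordinates); (iii) the initial condition $(\xi^{\rm geo}_\beta)^{-1}\tilde e_\rho(p^N_\gamma)$ is uniformly bounded by bounded geometry of $E$. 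Feeding (i)--(iii) into Lemma \ref{flow_lem} applied to the parallel-transport ODE yields constants $C_k$, independent of $\gamma,\beta$, with $|\uD^\a\tilde\mu_{\gamma\beta}|\leq C_k$ for $|\a|\leq k$.

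The reverse bound on $\tilde\mu_{\beta\gamma}$ follows from the chain rule applied to $\tilde\mu_{\beta\gamma}\circ\tilde\mu_{\gamma\beta}=\Id$, once we observe that $\tilde\mu_{\beta\gamma}$ itself is bounded (it is matrix-valued with values in $\mathrm{O}(r)$ or $\mathrm{U}(r)$, since both frames are orthonormal and parallel transport is an isometry of fibers). Combined with the first case this gives (C2) on all overlaps. The main obstacle is organizational rather than conceptual: writing the composite parallel transport as a single flow whose coefficients are transparently controlled by the bounded-geometry data of $M$, $N$ and $E$, so that Lemma \ref{flow_lem} applies uniformly in $\gamma$.
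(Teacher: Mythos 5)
Your proposal is correct and follows essentially the same route as the paper: the paper's entire proof is the one-line remark that the argument is the same as in Lemma \ref{FC-1}, and your sketch spells out exactly that adaptation — replacing the geodesic ODE by the parallel-transport ODE, whose coefficients $\dot c^i\,\tilde\Gamma_{i\rho}^\sigma$ are uniformly controlled via \eqref{coord_E_bdd_Gamma} and the base-manifold bounds, then invoking Lemma \ref{flow_lem} and the chain-rule/unitarity argument for the inverse transition functions.
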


\begin{proof}
 The proof is the same as in  Lemma \ref{FC-1}.
\end{proof}

\begin{thm}\label{trace-th_vec}
Let $E$ be a hermitian or Riemannian vector bundle of bounded geometry over a Riemannian manifold $(M,g)$  together with an embedded $k$-dimensional submanifold $N$. Let $(M,N)$ be of bounded geometry.  If $1<p<\infty$ and $s>\frac{n-k}{p}$, then the pointwise restriction $\Tr_{N}: \mathcal{D}(M,E)\to \mathcal{D}(N,E|_N)$  extends to a linear and bounded operator from $H^s_p(E)$ onto $B^{s-\frac{n-k}{p}}_{p,p}(E|_N)$, i.e., 
\beq\label{trace-mfd_vec}
\Tr_N \, H^s_p(E)=B^{s-\frac{n-k}{p}}_{p,p}(E|_N). 
\eeq
Moreover, $\Tr_N$ has a linear and bounded right inverse, an extension operator $\mathrm{Ex}_M: B^{s-\frac{n-k}{p}}_{p,p}(E|_N) \to H^s_p(E)$.
\end{thm}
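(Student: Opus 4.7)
The plan is to mirror the proof of Theorem \ref{trace-th}, replacing scalar function spaces on $\rn$ and $\rn/\mR^k$ by their vector-valued counterparts $H^s_p(\rn,\mF^r)$ and $B^{s-\frac{n-k}{p}}_{p,p}(\R^k,\mF^r)$, and using the synchronous trivialization along Fermi coordinates $\uT_E^{\rm FC}$ from Definition \ref{FC_vec} as the preferred admissible trivialization of $E$. By Theorem \ref{FC_admis}, the underlying manifold trivialization $\uT^{\rm FC}$ is admissible, and by Lemma \ref{FC_1_vec} it satisfies (C2), so $\uT_E^{\rm FC}$ is an admissible trivialization of $E$. Hence, by Theorem \ref{indep_H_vec}, the norm on $H^s_p(E)$ may be computed with respect to $\uT_E^{\rm FC}$. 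The crucial structural feature of $\uT_E^{\rm FC}$ is that, for $\gamma\in I_N$, it is built by first synchronously trivializing $E|_{U_\gamma\cap N}$ along the geodesic coordinates on $N$ and then parallel-transporting normally to $N$. Therefore the restriction of $\xi_\gamma$ to $\kappa_\gamma^{-1}(U_\gamma\cap N)=\{0\}^{n-k}\times B_{2R}^k$ coincides with a synchronous trivialization of $E|_N$ along geodesic normal coordinates on $N$. This gives immediately an admissible trivialization $\uT^{N,{\rm geo}}_{E|_N}=(U'_\alpha,\kappa'_\alpha,\xi'_\alpha,h'_\alpha)_{\alpha\in I_N}$ of $E|_N$ in the sense of Definitions \ref{def_syn_geo} and \ref{def_vec_norm_bddgeo}, and by the scalar result ($N$ is of bounded geometry by Lemma \ref{N_bdd_geo}) the norm of $B^{s-\frac{n-k}{p}}_{p,p}(E|_N)$ may be computed locally in these coordinates.

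First, I would define the trace operator locally by
\[
(\Tr_N\phi)(x):=\sum_{\alpha\in I_N}\left(\xi'_\alpha\circ\left(\Tr_{\R^k}\bigl[\xi_\alpha^*(h_\alpha\phi)\bigr],\,(\kappa'_\alpha)^{-1}(\cdot)\right)\right)(x),\qquad x\in N,
\]
where $\Tr_{\R^k}$ is the componentwise Euclidean trace operator on $H^s_p(\rn,\mF^r)$, which is well-defined and bounded onto $B^{s-\frac{n-k}{p}}_{p,p}(\R^k,\mF^r)$ by the classical result (applied componentwise and combined with Lemma \ref{vec_norm_equ}). For scalar functions $\Tr_{\R^k}$ respects multiplication by a test function and composition with diffeomorphisms preserving $\R^k$; both properties lift componentwise to the vector-valued setting. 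Using these properties together with Lemma \ref{Sob_Rn} (pointwise multipliers and diffeomorphisms, which likewise carry over to $H^s_p(\rn,\mF^r)$ via Lemma \ref{vec_norm_equ}) and the uniform local finiteness of the cover, the chain of estimates from Step~1 of the proof of Theorem \ref{trace-th} transfers verbatim to yield
\[
\|\Tr_N\phi\|_{B^{s-\frac{n-k}{p}}_{p,p}(E|_N)}^p\lesssim\sum_{\alpha\in I_N}\|\xi_\alpha^*(h_\alpha\phi)\|_{H^s_p(\rn,\mF^r)}^p\leq\|\phi\|_{H^s_p(E)}^p.
\]

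For surjectivity and the existence of a linear bounded right inverse, I would proceed as in Step~2 of the proof of Theorem \ref{trace-th}. Fix the cutoff $\psi=\psi_1\times\psi_2$ on $\rn$ as in that proof, and let $\mathrm{Ex}_{\rn}$ be the componentwise Euclidean extension operator on vector-valued spaces (obtained by applying the scalar extension from Remark \ref{trace_rn} to each component and invoking Lemma \ref{vec_norm_equ}). Define
\[
\mathrm{Ex}_M\psi':=\sum_{\alpha\in I_N}\xi_\alpha\Bigl(\,\cdot\,,\,\psi\cdot\mathrm{Ex}_{\rn}\bigl((\xi'_\alpha)^*(h'_\alpha\psi')\bigr)\Bigr)\circ\kappa_\alpha^{-1}
\]
on $U_{2R}(N)$, extended by zero otherwise. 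By the compatibility between $\xi_\alpha|_{\R^k}$ and $\xi'_\alpha$, one checks $\Tr_N\circ\mathrm{Ex}_M=\Id$ on $B^{s-\frac{n-k}{p}}_{p,p}(E|_N)$, and the boundedness of $\mathrm{Ex}_M$ follows from the same calculation as in Theorem \ref{trace-th}, using Lemma \ref{Sob_Rn} for vector-valued spaces together with condition (C2) to control the transition maps of $\xi_\alpha$ across overlapping charts.

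The main obstacle is the bookkeeping around the identification of $\xi_\alpha$ restricted to $\{0\}^{n-k}\times B^k_{2R}$ with $\xi'_\alpha$; one must make sure that the trace taken componentwise in the Fermi chart genuinely lands in the admissible trivialization used to describe $B^{s-\frac{n-k}{p}}_{p,p}(E|_N)$, and that the change of frame between $\xi_\alpha|_N$ and $\xi'_\alpha$ is the identity in each fiber of $E|_N\cap U_\gamma$. This is exactly the content of Definition \ref{FC_vec}, which is why that definition was tailored in the way it was, but it is the one step that has no analog in the scalar proof and deserves explicit verification.
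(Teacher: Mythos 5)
Your proposal is correct and follows essentially the same route as the paper: reduce to the trivial bundle over $\rn$ via the componentwise Euclidean trace and Lemma \ref{vec_norm_equ}, then repeat both steps of the proof of Theorem \ref{trace-th} using that the synchronous trivialization along Fermi coordinates restricts on $\{0\}^{n-k}\times B^k_{2R}$ to a geodesic trivialization of $E|_N$ (which the paper records as $\xi'_\gamma=\xi_\gamma|_{(V_\gamma\cap \mR^k)\times \mF^r}$, exactly the compatibility you flag as needing verification and which is built into Definition \ref{FC_vec}).
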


\begin{proof}
We start with the case that $E=\mR^n\times  \mF^r$ is the trivial bundle over $\mR^n$. In this case the claim follows immediately from the Trace Theorem on $(\mR^n, \mR^k)$ and Lemma \ref{vec_norm_equ}.

 The rest of the proof follows along the lines of Theorem \ref{trace-th}, using that by construction 
$(U'_\gamma=U_\gamma\cap N, \kappa'_\gamma=(\kappa_\gamma^{-1}|_{U'_\gamma})^{-1}, \xi'_\gamma=\xi_\gamma|_{(V_\gamma\cap \mR^k)\times \mF^r}, h'_\gamma= h_\gamma|_{U'_\gamma})_{\gamma\in I_N}$ gives a geodesic trivialization of $E|_N$.
\end{proof}

\section{Outlooks} 

\subsection{Spaces with symmetries - a first straightforward example}\label{outlook_1}

The aim of  this subsection is to give an  application of admissible trivializations to spaces with symmetries. We consider manifolds $M$, where a countable discrete group $G$ acts in a convenient way and show 
that  the Sobolev spaces of functions on the resulting orbit space $M/G$ and the weighted Sobolev spaces of $G$-invariant functions on $M$ coincide. This is in spirit of 

\begin{thm} \cite[Section 9.2.1]{T-F1}  Let $1<p<\infty$ and consider the weight  $\rho(x)=(1+|x|)^{-\varkappa}$ on Euclidean space $\mR^n$ where $\varkappa p>n$. Let $\mathbb{T}^n:=\mR^n/\mZ^n$ denote the torus and  $\pi: \mR^n\to \mathbb{T}^n$  the natural projection. Put $H_{p,\pi}^s(\mR^n, \rho):=\{ f\in \mathcal{D'}(\mR^n) \ |\  \rho f\in  H_{p}^s(\mR^n)\ \text{and\ } f \text{\ is\ $\pi$-periodic} \}$, then
\[ H_p^s(\mathbb{T}^n)=H_{p,\pi}^s(\mR^n, \rho).\]
 \end{thm}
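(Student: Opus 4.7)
The plan is to realise $\mathbb{T}^n$ as a compact manifold of bounded geometry, pick a finite admissible trivialization, lift it via $\pi$ to a $\mathbb{Z}^n$-equivariant trivialization of $\mathbb{R}^n$, and then reduce the claimed identity to a single application of Theorem \ref{indep_H}. Concretely, I would fix an admissible trivialization $\uT=(U_\alpha,\kappa_\alpha,h_\alpha)_{\alpha\in I}$ of $\mathbb{T}^n$ in the sense of Definition \ref{bddcoord}, with $I$ finite and each $U_\alpha$ small enough that $\pi^{-1}(U_\alpha)=\bigsqcup_{z\in \mZ^n} U_\alpha^z$ with $U_\alpha^z=U_\alpha^0+z$. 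Set $\kappa_{\alpha,z}(\cdot)=\kappa_{\alpha,0}(\cdot)+z$ and let $\tilde h_{\alpha,z}$ be the restriction of $h_\alpha\circ\pi$ to $U_\alpha^z$, extended by zero. The family $\widetilde\uT=(U_\alpha^z,\kappa_{\alpha,z},\tilde h_{\alpha,z})_{(\alpha,z)\in I\times\mZ^n}$ is then a uniformly locally finite trivialization of $\mR^n$.

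Next I would verify that $\widetilde\uT$ is admissible for $\mR^n$. Condition (B2) is immediate from translation invariance, since $\tilde h_{\alpha,z}\circ\kappa_{\alpha,z}=h_\alpha\circ\kappa_\alpha$ as a function on $V_\alpha\subset\mR^n$. For (B1) I would lift any geodesic atlas of $\mathbb{T}^n$ that is compatible with $\uT$ to a $\mZ^n$-equivariant geodesic atlas of $\mR^n$ (using that $\pi$ is a local isometry), whereupon the compatibility estimates for $\widetilde\uT$ reduce to the corresponding finitely many estimates for $\uT$ on $\mathbb{T}^n$. Applying Theorem \ref{indep_H} then yields, for every $g\in H^s_p(\mR^n)$,
\[
\|g\|_{H^s_p(\mR^n)}^p\sim\sum_{\alpha\in I}\sum_{z\in\mZ^n}\|(\tilde h_{\alpha,z}\,g)\circ\kappa_{\alpha,z}\|_{H^s_p(\mR^n)}^p.
\]

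Specializing to $g=\rho f$ with $f$ a $\pi$-periodic distribution, I would factor $\rho(x)=\rho(z)\,\phi_{\alpha,z}(x)$ on $\supp\tilde h_{\alpha,z}$, where $\phi_{\alpha,z}(x)=((1+|z|)/(1+|x|))^{\varkappa}$. A direct computation shows that, uniformly in $(\alpha,z)$, both $\phi_{\alpha,z}$ and $1/\phi_{\alpha,z}$ are smooth with all derivatives bounded by constants depending only on $\varkappa,n$ on $\supp\tilde h_{\alpha,z}$ (this is the point where the regularity of the weight, as opposed to its summability, enters). The pointwise-multiplier assertion of Lemma \ref{Sob_Rn}(i) then gives
\[
\|(\tilde h_{\alpha,z}\,\rho f)\circ\kappa_{\alpha,z}\|_{H^s_p(\mR^n)}\sim\rho(z)\,\|(\tilde h_{\alpha,z}\,f)\circ\kappa_{\alpha,z}\|_{H^s_p(\mR^n)}.
\]
Using $\kappa_{\alpha,z}=\kappa_{\alpha,0}(\cdot)+z$, $\tilde h_{\alpha,z}(\cdot)=\tilde h_{\alpha,0}(\cdot-z)$ and the $\mZ^n$-periodicity of $f$, a translation in the argument of the $H^s_p(\mR^n)$-norm turns the right-hand side into $\rho(z)\,\|(h_\alpha f)\circ\kappa_\alpha\|_{H^s_p(\mR^n)}$, with $f$ now viewed as a function on $\mathbb{T}^n$. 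Assembling the pieces,
\[
\|\rho f\|_{H^s_p(\mR^n)}^p\sim\Bigl(\sum_{z\in\mZ^n}(1+|z|)^{-\varkappa p}\Bigr)\|f\|_{H^s_p(\mathbb{T}^n)}^p,
\]
and the prefactor is finite precisely when $\varkappa p>n$, giving the claimed identity with equivalent norms.

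I expect the main obstacle to be the verification of condition (B1) for $\widetilde\uT$, which requires a careful choice of a $\mZ^n$-equivariant geodesic atlas on $\mR^n$ and a clean argument that the lifted transition functions inherit the bounds from $\mathbb{T}^n$; once this is in place, the rest is a routine bookkeeping exercise combining Lemma \ref{Sob_Rn}(i) with the translation invariance of the $H^s_p(\mR^n)$-norm. A minor subtlety, worth isolating as a short lemma, is the uniform smoothness of $\phi_{\alpha,z}$ on the supports, which reflects the fact that $\rho$ has logarithmic derivatives decaying at infinity independently of the summability constraint $\varkappa p>n$.
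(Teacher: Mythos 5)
Your argument is correct, but it is worth being clear about what the paper actually does with this statement: it does \emph{not} prove it. The theorem is quoted from Triebel, whose proof goes through Fourier series and periodic distributions; the paper then proves a generalization (Theorem \ref{out_thm}) for $G$-manifolds, and your proposal is essentially that proof specialized to $(\mR^n,\mZ^n)$ --- lift a finite admissible trivialization of the quotient to a $G$-equivariant admissible one upstairs, invoke Theorem \ref{indep_H}, treat the weight as a pointwise multiplier chart by chart via Lemma \ref{Sob_Rn}(i), and use periodicity to identify $(\tilde h_{\alpha,z}f)\circ\kappa_{\alpha,z}$ with $(h_\alpha f)\circ\kappa_\alpha$. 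Where your version genuinely improves on the paper's general theorem is in the treatment of the weight: by factoring $\rho=\rho(z)\phi_{\alpha,z}$ with $\phi_{\alpha,z}^{\pm1}$ uniformly bounded multipliers and keeping the $\ell_p$-structure of the localized norm intact, you obtain the exact prefactor $\sum_{z\in\mZ^n}(1+|z|)^{-\varkappa p}$ and hence the sharp condition $\varkappa p>n$. The paper's notion of $G$-adapted weight instead imposes the $\ell_1$-type condition $\sum_{h\in G}|\uD^{\a}(\rho\circ\kappa_{\alpha,h})|\leq C_k$, which for this particular $\rho$ forces $\varkappa>n$; so Theorem \ref{out_thm} as stated does not recover the cited result in the range $n/p<\varkappa\leq n$, while your computation does. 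Two minor points to tidy up: $(1+|x|)^{-\varkappa}$ is not smooth at the origin, so you should either replace it by the equivalent smooth weight $(1+|x|^2)^{-\varkappa/2}$ (as Triebel does) or note that only one chart is affected; and in the forward direction you should remark that $\phi_{\alpha,z}$ need only be a multiplier on a neighbourhood of the compact set $V_\alpha$, so a fixed cutoff extends it to a global multiplier with uniform constants.
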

 
This is just a special case of the theorem given in \cite[Section 9.2.1]{T-F1}, where more generally Besov and Triebel-Lizorkin spaces are treated, cf. Section \ref{sec_TL}. The proof uses Fourier series. With the help of admissible trivializations, we want to present a small generalization of this result for manifolds with $G$-actions. \\

We start by introducing our setup. In order to avoid any confusion with the metric $g$,  elements of the group $G$ are  denoted by $h$.

\begin{defi}[\bf $G$-manifold]
Let $(M,g)$ be a Riemannian manifold, and let $G$ be a countable discrete group that acts freely and properly discontinuously on $M$.  If, additionally, $g$ is invariant under the $G$-action (which means that $h: p\in M\mapsto h\cdot p\in  M$ is an isometry for all $h\in G$), we call $(M,g)$ a $G$-manifold. 
\end{defi}

By \cite[Corollary 12.27]{Lee} the orbit space $\widetilde{M}:=M/G$  of a $G$-manifold is again a  manifold. From now on we restrict ourselves to the case where $\widetilde{M}$ is closed. 
Let $\pi: M\to \widetilde{M}$ be the corresponding projection. If $(M,g)$ is a $G$-manifold, then there is a Riemannian metric $\tilde{g}$ on $\widetilde{M}$ such that $\pi^*\tilde{g}=g$. 
Let now $\widetilde{\uT}=(U_\alpha, \kappa_\alpha, h_\alpha)_{\alpha\in I}$ be an admissible trivialization of $\widetilde{M}$. In particular, this means  we assume that $(\widetilde{M},\tilde{g})$ is of bounded geometry  and, hence, so is $(M,g)$. Then there are $U_{\alpha,h}\subset M$ with $\pi^{-1}(U_\alpha)=\sqcup_{h\in G} U_{\alpha,h}$ and  $U_{\alpha,h}=h\cdot U_{\alpha,e}$ for all $\alpha\in I$. Here $e$ is the identity element of $G$. 
 Let $\pi_{\alpha,h}:=\pi|_{U_{\alpha,h}}: U_{\alpha,h}\to U_\alpha$ denote the corresponding diffeomorphism. 
 Setting $\kappa_{\alpha,h}:=\pi_{\alpha,h}^{-1}\circ \kappa_\alpha:V_\alpha \to U_{\alpha, h}$ and 
\begin{align*}                                                                                                                                                                                                         
h_{\alpha,h}:= \left\{ \begin{matrix} h_\alpha\circ \pi_{\alpha,h}  & \text{on\ }U_{\alpha,h},\\
                        0 & \text{else},\phantom{mm}
                       \end{matrix}
 \right.
\end{align*}

we have $h_{\alpha,h}\circ \kappa_{\alpha,h}= h_\alpha\circ \kappa_\alpha$ for all $\alpha\in I$, $h\in G$. This way we obtain an admissible trivialization  $\uT=(U_{\alpha,h}, \kappa_{\alpha,h}, h_{\alpha,h})_{\alpha\in I, h\in G}$  of $M$, which we call $G$-adapted trivialization. 

\begin{defi}[\bf $G$-adapted weight] Let $(M,g)$ be a $G$-manifold  with a $G$-adapted trivialization $\uT$ as above.
A weight function  $\rho: M \to (0,\infty)$ on $M$ is called $G$-adapted, if there exist a constant $C_{k}>0$  for all $k\in \mN_0$ such that for $\a\in \mN_0^n$ with $|\a|\leq k$ and all  $\alpha\in I$,  
\[
\sum_{h\in G} |\uD^\a(\rho \circ \kappa_{\alpha,h})|\leq C_k.
\]
\end{defi}

\begin{rem}\label{G_adap_triv}
The notion of a $G$-adapted weight is independent on the chosen admissible trivialization on $M/G$. This follows immediately from the compatibility of two admissible trivializations, cf. Remark \ref{rem_comp_T}.ii.
\end{rem}

\begin{ex}
We give an example of a weight adapted to the $G$-action.  Take a geodesic trivialization on $\widetilde{M}$ as in Example \ref{geod_coord_triv} and 
let $\uT$ be an admissible trivialization of $M$  constructed from  $\widetilde{\uT}$ on $\widetilde{M}$ as above.  
There is an injection $\iota: G\to \mN$, since $G$ is countable, and we set \[ \rho (p)=\sum_{(\alpha, h)\in I\times G;\ p\in U_{\alpha, h}} \iota(h)^{-2} h_{\alpha, h} (p).
\] 
 Since the covering is locally finite, the summation is always finite. Moreover,  Definition \ref{bddcoord} and the uniform finiteness of the cover yield for fixed $\alpha\in I$ and  all $\a\in \mN_0^n$ with $|\a|\leq k$ ($k\in \mN_0$), 

\begin{align*}
 \sum_{h\in G}|\uD^\a(\rho\circ \kappa_{\alpha,h})|\leq &\sum_{h\in G}\sum_{(\alpha', h')\in I\times G;\ \atop U_{\alpha,h}\cap U_{\alpha',h'} \neq \varnothing} \iota(h')^{-2} \left| \uD^\a  (h_{\alpha',h'}\circ \kappa_{\alpha, h})\right|\\
\leq & C_k'\sum_{h\in G}\sum_{|\a'|\leq |\a|}\ \sum_{(\alpha', h')\in I\times G;\ \atop  U_{\alpha,h}\cap U_{\alpha',h'} \neq \varnothing} \iota(h')^{-2} \left| \uD^{\a'} \left( h_{\alpha'} \circ \kappa_{\alpha'} \right)\right|\\
\leq & C_k''\sum_{h\in G}\sum_{(\alpha', h')\in I\times G;\ \atop U_{\alpha,h}\cap U_{\alpha',h'}\neq \varnothing} \iota (h')^{-2}
 =  C_k'' \sum_{h\in G} \sum_{(\alpha',h')\in I\times G,\ \atop U_{\alpha, e}\cap U_{\alpha',h^{-1}h'}\neq \varnothing} \iota(h')^{-2}\\
=  & C_k'' \sum_{h\in G} \sum_{(\alpha',h')\in I\times G,\ \atop  U_{\alpha, e}\cap U_{\alpha',h'}\neq \varnothing} \iota(hh')^{-2} 
\leq  C_k''L \sum_{h\in G} \iota(h)^{-2} \leq  C_k''L \sum_{i\in \mN} i^{-2}<\infty, 
\end{align*}
 where  $L$ is the multiplicity of the cover  and the constants $C_k', C_k''$ do not depend on $h\in G$ and $\alpha\in I$. 
In particular, together with Remark \ref{G_adap_triv}, this example demonstrates that each $G$-manifold admits a $G$-adapted weight.
\end{ex}

We fix some more notation. 
Let  $s\in \real$ and $1<p<\infty$. Then the space $H_p^s(M,\rho)$ consists of all distributions $f\in \mathcal{D}'(M)$ such that $$\Vert f\Vert_{H_p^s(M,\rho)}:=\Vert \rho f\Vert_{H_p^s(M)}<\infty.$$ Moreover, we call a distribution $f\in \mathcal{D}'(M)$ $G$-invariant, if $f(\phi)=f(h^*\phi)$ holds for all $\phi\in \mathcal{D}(M)$ and $h\in G$. The space of all $G$-invariant distributions in $H^s_p(M,\rho)$ is denoted by $H_p^s(M,\rho)^G$.

\begin{thm}\label{out_thm}
 Let $(M,g)$ be a $G$-manifold of bounded geometry where $\widetilde{M}=M/G$ is closed, and let $\rho$ be a $G$-adapted weight on $M$. Furthermore, let  $s\in \mathbb{R}$ and $1<p<\infty$. Then  $$H_p^s(\widetilde{M})=H_p^s(M,\rho)^G.$$ 
\end{thm}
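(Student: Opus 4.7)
The plan is to work with the $G$-adapted trivialization $\uT=(U_{\alpha,h},\kappa_{\alpha,h},h_{\alpha,h})_{\alpha\in I,\,h\in G}$ of $M$ obtained from the admissible trivialization $\widetilde{\uT}=(U_\alpha,\kappa_\alpha,h_\alpha)_{\alpha\in I}$ of the closed manifold $\widetilde M$, so that $I$ is finite. Since $\uT$ is itself admissible, Theorem~\ref{indep_H} permits computing $\Vert\cdot\Vert_{H^s_p(M)}$ through $\uT$. The basic identity driving everything is that for $G$-invariant $f=\tilde f\circ\pi$, using $h_{\alpha,h}\circ\kappa_{\alpha,h}=h_\alpha\circ\kappa_\alpha$ (by construction) and $f\circ\kappa_{\alpha,h}=\tilde f\circ\kappa_\alpha$ ($G$-invariance), one finds
\[
(h_{\alpha,h}\rho f)\circ\kappa_{\alpha,h}=(\rho\circ\kappa_{\alpha,h})\cdot(h_\alpha\tilde f)\circ\kappa_\alpha.
\]
Thus the chart-function $(h_\alpha\tilde f)\circ\kappa_\alpha$, independent of $h$, gets multiplied by the family $\{\rho\circ\kappa_{\alpha,h}\}_{h\in G}$ whose pointwise derivative sums are controlled by the $G$-adapted hypothesis.

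For the inclusion $H^s_p(M,\rho)^G\hookrightarrow H^s_p(\widetilde M)$, I keep only the single term $h=e$ in the sum defining $\Vert\rho f\Vert_{H^s_p(M)}$. Choosing $\psi_\alpha\in\mathcal D(V_\alpha)$ which equals $(\rho\circ\kappa_{\alpha,e})^{-1}$ on $\supp(h_\alpha\circ\kappa_\alpha)$ --- possible because $\rho>0$ is smooth and the finitely many compact sets $\supp(h_\alpha\circ\kappa_\alpha)$ admit a uniform positive lower bound for $\rho\circ\kappa_{\alpha,e}$ --- one has $(h_\alpha\tilde f)\circ\kappa_\alpha=\psi_\alpha\cdot(h_{\alpha,e}\rho f)\circ\kappa_{\alpha,e}$. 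Lemma~\ref{Sob_Rn}(i) combined with finiteness of $I$ gives
\[
\Vert\tilde f\Vert_{H^s_p(\widetilde M)}^p\lesssim\sum_\alpha\Vert(h_{\alpha,e}\rho f)\circ\kappa_{\alpha,e}\Vert_{H^s_p(\R^n)}^p\le\Vert\rho f\Vert_{H^s_p(M)}^p=\Vert f\Vert_{H^s_p(M,\rho)}^p,
\]
which is valid for all $s\in\R$ and $1<p<\infty$.

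For the reverse inclusion, I first treat nonnegative integer order $s=k$. Setting $g_\alpha:=(h_\alpha\tilde f)\circ\kappa_\alpha$, expansion via Leibniz inside the $W^k_p$-norm yields
\[
\sum_h\bigl\Vert(\rho\circ\kappa_{\alpha,h})\,g_\alpha\bigr\Vert_{W^k_p(\R^n)}^p\lesssim\sum_{|\gamma|\le k,\,\beta\le\gamma}\int\Bigl(\sum_h|\uD^\beta(\rho\circ\kappa_{\alpha,h})(x)|^p\Bigr)|\uD^{\gamma-\beta}g_\alpha(x)|^p\,dx,
\]
and the $G$-adapted hypothesis gives $\sum_h|\uD^\beta(\rho\circ\kappa_{\alpha,h})(x)|^p\le\bigl(\sum_h|\uD^\beta(\rho\circ\kappa_{\alpha,h})(x)|\bigr)^p\le C_k^p$ pointwise (using $p\ge 1$). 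The right-hand side then reduces to $\lesssim\Vert g_\alpha\Vert_{W^k_p}^p$; summing over the finite $I$ and invoking \eqref{coinc} gives the required bound $\Vert f\Vert_{H^k_p(M,\rho)}\lesssim\Vert\tilde f\Vert_{H^k_p(\widetilde M)}$ for every $k\in\no$. The general case $s\in\R$ then follows by complex interpolation on the scale of Bessel potential spaces, using $[H^{s_0}_p,H^{s_1}_p]_\Theta=H^s_p$ both on $\widetilde M$ and on $(M,\rho)$ (the latter via the topological isomorphism $f\mapsto\rho f$ to $H^s_p(M)$); negative orders can alternatively be handled by duality.

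The principal obstacle is precisely the passage to non-integer $s$: applying the sup-norm-based multiplier Lemma~\ref{Sob_Rn}(i) term-by-term in $h$ would require $\sum_h(\sup_x|\uD^\beta(\rho\circ\kappa_{\alpha,h})|)^p<\infty$, which the pointwise $G$-adapted condition does not supply. The integer-order argument circumvents this by exchanging sup-norms for pointwise integrand factors that match the hypothesis, making interpolation the natural device to extend to all real $s$.
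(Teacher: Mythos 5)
Your first inclusion ($H^s_p(M,\rho)^G\hookrightarrow H^s_p(\widetilde M)$) is exactly the paper's argument: discard all terms except $h=e$ and multiply by a uniformly controlled cut-off version of $(\rho\circ\kappa_{\alpha,e})^{-1}$, using $\rho\geq c>0$ on the compact set $\overline{\cup_\alpha U_{\alpha,e}}$. For the reverse inclusion the paper is much shorter than you are: it writes $(h_{\alpha,h}\rho f)\circ\kappa_{\alpha,h}=(\rho\circ\kappa_{\alpha,h})\,\big((h_\alpha f')\circ\kappa_\alpha\big)$ and applies the multiplier Lemma~\ref{Sob_Rn}(i) to each factor $\rho\circ\kappa_{\alpha,h}$ separately, summing the resulting constants over $h\in G$. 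That settles every $s\in\real$ at once, but it uses the $G$-adapted condition in the form of a bound on $\sum_{h}$ of the $C^k$-\emph{norms} of $\rho\circ\kappa_{\alpha,h}$ (i.e.\ $\sum_h\sup_x$ rather than $\sup_x\sum_h$) --- a bound which the example weight constructed in the paper does satisfy. You read the definition strictly pointwise and therefore reorganize the estimate so that the sum over $h$ sits inside the integral; at integer order your Leibniz argument is correct and genuinely needs less than the paper's step.

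The gap is the passage to non-integer and, above all, negative $s$. Interpolating between $W^{k_0}_p$ and $W^{k_1}_p$ with $k_0,k_1\in\no$ only reaches $s\in[0,\infty)$; your integer-order computation provides no negative endpoint, since the Leibniz expansion has no analogue in $H^{-m}_p$. The fallback ``negative orders can be handled by duality'' does not close this: the adjoint of $\tilde f\mapsto\rho(\tilde f\circ\pi)$ with respect to the $L_2$ pairing is the push-forward $v\mapsto\pi_*(\rho v)$, and bounding it on $H^{\sigma}_{p'}$, $\sigma>0$, means estimating $\Vert\sum_h(\rho\circ\kappa_{\alpha,h})\,(h_{\alpha,h}v)\circ\kappa_{\alpha,h}\Vert_{H^\sigma_{p'}(\R^n)}$ against the $\ell_{p'}$-sum of the $\Vert(h_{\alpha,h}v)\circ\kappa_{\alpha,h}\Vert_{H^\sigma_{p'}(\R^n)}$; via H\"older this again demands $\ell_p$-summability over $h$ of the multiplier norms of $\rho\circ\kappa_{\alpha,h}$ --- exactly the sup-norm summability you set out to avoid. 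So as written your proof covers only $s\geq0$. The cleanest repair is to adopt the stronger (sup-summed) reading of the $G$-adapted condition, after which the estimate $\sum_h\Vert(\rho\circ\kappa_{\alpha,h})g_\alpha\Vert_{H^s_p}^p\leq\bigl(\sum_h C(\rho\circ\kappa_{\alpha,h})\bigr)^p\Vert g_\alpha\Vert_{H^s_p}^p$ disposes of all $s\in\real$ simultaneously and the interpolation detour becomes unnecessary. (A further minor point: the identity $[H^{s_0}_p,H^{s_1}_p]_\Theta=H^s_p$ on manifolds of bounded geometry is true but is nowhere established in this paper, which only uses real interpolation to define the Besov scale.)
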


\begin{proof}
 It suffices to show that the norms of the corresponding  spaces are equivalent. 
 We work with a geodesic trivialization $\widetilde{\uT}^{\rm geo}$ of $(M/G, \tilde{g})$ and a $G$-adapted trivialization $\uT$ of $M$ constructed from  $\tilde{T}^{\rm geo}$ as described above.  Note that the closedness of $M/G$ implies, that $\rho|_{\cup_\alpha U_{\alpha, e}}\geq c>0$ for some constant $c>0$ (since then $\cup_\alpha U_{\alpha, e}$ is compact). 
 Let $f'\in H_p^s(M/G)$ and set $f=f'\circ \pi$. Then,

\begin{align*}
 \Vert f\Vert_{H_p^s(M,\rho)}^p=&\sum_{\alpha\in I, h\in G} \Vert (h_{\alpha,h}\rho f)\circ \kappa_{\alpha,h}\Vert_{H_p^s(\mR^n)}^p= \sum_{\alpha\in I, h\in G} \Vert (\rho\circ \kappa_{\alpha,h})   \left( (h_\alpha f')\circ \kappa_{\alpha}\right) \Vert_{H_p^s(\mR^n)}^p\\
\lesssim &   \sum_{\alpha\in I} \Vert  (h_\alpha f')\circ \kappa_{\alpha} \Vert_{H_p^s(\mR^n)}^p =\Vert f'\Vert_{H_p^s(M/G)}^p.
\end{align*}

Let now $f\in H_p^s(M,\rho)^G$. Since $f$ is $G$-invariant,  there is a unique $f'$ with $f=f'\circ \pi$.
Then, 
\begin{align*}
 \Vert f'\Vert_{H_p^s(M/G)}^p=&\sum_{\alpha\in I} \Vert (h_{\alpha} f')\circ \kappa_{\alpha}\Vert_{H_p^s(\mR^n)}^p=\sum_{\alpha\in I} \Vert (\frac{1}{\rho}\circ \kappa_{\alpha,e})   \left( (h_{\alpha,e} \rho f)\circ \kappa_{\alpha, e}\right) \Vert_{H_p^s(\mR^n)}^p\\
 \lesssim & \sum_{\alpha\in I} \Vert \left( (h_{\alpha,e} \rho f)\circ \kappa_{\alpha, e}\right) \Vert_{H_p^s(\mR^n)}^p
 \leq \sum_{\alpha\in I, h\in G} \Vert \left( (h_{\alpha,h} \rho f)\circ \kappa_{\alpha, h}\right) \Vert_{H_p^s(\mR^n)}^p=\Vert f\Vert_{H_p^s(M,\rho)}^p.
\end{align*}
Here we used the uniform boundedness of $\frac{1}{\rho}\circ \kappa_{\alpha,e}$ and its derivatives, which follows from the corresponding statement for  $\rho\circ \kappa_{\alpha,e}$ and the lower bound $\rho\circ \kappa_{\alpha,e}\geq c>0$.
\end{proof}

\begin{rem} The restriction to closed  manifolds $\widetilde{M}$  (i.e., compact manifolds without boundary) 
in  Theorem \ref{out_thm} should not be necessary. In case that $\widetilde{M}$ is noncompact,  one needs to modify the definition of $G$-adapted weights in a suitable way to  assure  the weight is bounded away from zero with respect to the  {\em 'noncompact directions'} of $\widetilde{M}$. 
\end{rem}

We conclude our considerations with an example of a $G$-manifold other than the torus, which is covered by Theorem \ref{out_thm}. 

\begin{ex}
 Let $(\widetilde{M},\tilde{g})$ be a closed manifold. Let $G$ be  a subgroup of the fundamental group $\pi_1(\widetilde{M})$ of $\widetilde{M}$. Note that $G$ is countable since $\pi_1(\widetilde{M})$ is.  Let $(M,g)$ be the $G$-cover of $(\widetilde{M},\tilde{g})$ where $g=\pi^* \tilde{g}$. Then, $(M,g)$ is a $G$-manifold with $\widetilde{M}=M/G$.  
\end{ex}

\subsection{Triebel-Lizorkin spaces on manifolds}\label{sec_TL}

In order to keep our considerations as easy as possible, 
we have been concentrating on  (fractional) Sobolev spaces on Riemannian manifolds $M$ so far. This last paragraph is aimed at the reader who is more interested in the general theory of Besov and Triebel-Lizorkin spaces -- also referred to as B- and F-spaces in the sequel. 
We now want  to sketch how  those previous results generalize to Triebel-Lizorkin spaces  on manifolds. \\

By the Fourier-analytical approach, Triebel-Lizorkin spaces  $\F$, $s\in\real$, $0<p<\infty$,   $\ 0<q\leq\infty$, consist of 
all distributions $f\in \mathcal{S}'(\rn)$ such that
\begin{equation}\label{F-rn}
\big\Vert f\big\Vert_{\F}=
\Big\Vert \Big(\sum_{j=0}^{\infty}\big|2^{js}(\varphi_j\widehat{f})^\vee(\cdot)\big|^q\Big)^{1/q} 
\Big\Vert_{L_p(\rn)}
\end{equation}
$($usual modification if $q=\infty)$ is finite. 
Here  $\ \{\varphi_j\}_{j=0}^\infty$
denotes a   \textit{smooth dyadic resolution of unity}, where  $\ \varphi_0=\varphi \in \mathcal{S}(\rn)\ $ with 
$\ 
\supp\ \varphi\subset\left\{y\in\rn : \ |y|<2\right\}\quad \mbox{and}\quad
\varphi(x)=1\quad\mbox{if}\quad |x|\leq 1$,
and for each $\ j\in\nat\;$  put $\ \varphi_j(x)=
\varphi(2^{-j}x)-\varphi(2^{-j+1}x)$.  
The scale $\F$  generalizes fractional Sobolev spaces. In particular, we have the coincidence 
\[F^s_{p,2}(\rn)=H^s_{p}(\rn),\qquad s\in\mathbb{R}, \quad 1<p<\infty, \]
cf. \cite[p.~51]{T-F1}. In general, Besov spaces on $\rn$ are defined in the same way by interchanging the order in which the $\ell_q$- and $L_p$-norms are taken in \eqref{F-rn}. Hence,  the Besov space 
$B^s_{p,q}(\rn)$, $s\in\real$, $0<p,q \leq \infty$ consists of 
all distributions $f\in \mathcal{S}'(\rn)$ such that
\begin{align}
\big\Vert f\big\Vert_{B^s_{p,q}(\rn)}=
 \Big(\sum_{j=0}^{\infty}2^{jsq}\big\Vert(\varphi_j\widehat{f})^\vee\big\Vert_{L_p(\rn)}^q\Big)^{1/q} 
\end{align}
$($usual modification if $p=\infty$ and/or $q=\infty)$ is finite. In particular,  if $p=q$, 
$$B^s_{p,p}(\rn)=F^s_{p,p}(\rn), \qquad 0<p<\infty,$$
and we extend this to $p=\infty$ by putting $F^s_{\infty,\infty}(\rn):=B^s_{\infty,\infty}(\rn)$.  
The scales $F^s_{p,q}(\rn)$ and  $B^s_{p,q}(\rn)$ were studied in detail in  \cite{T-F1,Tri92}, where the reader may also find further references to the literature. \\

On $\mR^n$ one usually gives priority to Besov spaces, and they are mostly considered to be the simpler ones compared to Triebel-Lizorkin spaces. However,  the situation is different on manifolds $M$, since B-spaces lack the so-called \textit{ localization principle}, cf. \cite[Theorem~2.4.7(i)]{Tri92}, which is used to define  
F-spaces on $M$ (as was already done in  Definition \ref{H-koord} for  fractional Sobolev spaces, now replacing $H^s_p(\rn)$ by $F^s_{p,q}(\rn)$ inside of the norm). Then Besov spaces on $M$ are introduced via real interpolation of Triebel-Lizorkin spaces (in order to compute traces we have to generalize  the B-spaces on $M$ from Definition {\ref{def-B-spaces}} and allow $0<p\leq 1$).

\begin{defi}\label{F-koord}
Let $(M^n,g)$ be a Riemannian manifold  with an admissible trivialization $\uT=(U_\alpha, \kappa_{\alpha}, h_{\alpha})_{\alpha \in I}$ and  let $s\in \real$.

\begin{itemize}
\item[(i)] Let either  $0<p<\infty$, $0<q\leq \infty$ or $p=q=\infty$.  Then the space $F^{s,\uT}_{p,q}(M)$ contains all distributions $f\in \mathcal{D}'(M)$ such that 
\beq\label{F-def}
\Vert f\Vert_{F^{s,\uT}_{p,q}(M)}:=\left(\sum_{\alpha\in I} \Vert (h_\alpha f)\circ \kappa_\alpha\Vert^p_{F^s_{p,q}(\mathbb{R}^n)}\right)^{\frac{1}{p}}
\eeq
is finite (with the usual modification if $p=\infty$). 
\item[(ii)] Let $0<p,q\leq \infty$,  and let $-\infty< s_0<s< s_1<\infty$. Then 
\[
B^{s,\uT}_{p,q}(M)=\left(F^{s_0,\uT}_{p,p}(M), F^{s_1,\uT}_{p,p}(M)\right)_{\Theta,q}
\]
with $s=(1-\Theta)s_0+\Theta s_1$. 
\end{itemize}
\end{defi}

\begin{rem} Restricting ourselves to  geodesic trivializations $\uT^{\text{geo}}$, the spaces from Definition \ref{F-koord} coincide with the spaces $F^s_{p,q}(M)$ and $B^s_{p,q}(M)$, introduced in \cite[Definition~7.2.2, 7.3.1]{Tri92}. 
The space $B^{s,\uT}_{p,q}(M)$ is  independent of the chosen numbers $s_0, s_1\in \real$ and, furthermore, for $s\in \real$ and $0< p\leq \infty$ we have the coincidence 
\beq\label{B-norm}
B^{s,\uT}_{p,p}(M)=F^{s,\uT}_{p,p}(M).
\end{equation}
This follows from \cite[Theorem~7.3.1]{Tri92}, since the arguments presented there  are based on interpolation and  completely oblivious of the chosen trivialization $\uT$. In particular,  \eqref{B-norm} yields that 
for 
 $f\in B^s_{p,p}(M)$ a quasi-norm  is given by
\beq\label{B-def-2}
\Vert f\Vert_{B^{s,\uT}_{p,p}(M)}=\left(\sum_{\alpha\in I} \Vert (h_\alpha f)\circ \kappa_\alpha\Vert^p_{B^s_{p,p}(\mathbb{R}^n)}\right)^{\frac{1}{p}}.
\eeq
\end{rem}

Now we can transfer Theorem \ref{indep_H} to F- and B-spaces. 

\begin{thm}\label{indep-F}
Let $(M^n,g)$ be a Riemannian manifold with an admissible trivialization $\uT=(U_{\alpha},\kappa_{\alpha}, h_{\alpha})_{\alpha\in I}$. Furthermore, let $s\in \real$ and let $0<p,q\leq \infty$ 
$(0<p,q<\infty$ or $p=q=\infty$ for F-spaces$)$. Then 
\[
F^{s,\uT}_{p,q}(M)=F^{s}_{p,q}(M) \qquad 
\text{and}\qquad  
B^{s,\uT}_{p,q}(M)=B^{s}_{p,q}(M).
\]
\end{thm}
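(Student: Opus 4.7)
The plan is to mirror the proof of Theorem \ref{indep_H}, replacing the Sobolev scale $H^s_p(\mathbb{R}^n)$ by the Triebel--Lizorkin scale $F^s_{p,q}(\mathbb{R}^n)$, and then to obtain the Besov assertion via the interpolation definition. The only new ingredient beyond what is already available is the F-space analog of Lemma \ref{Sob_Rn}: a smooth function $\phi$ on $\mathbb{R}^n$ with $|\uD^\a\phi|\leq C_{|\a|}$ for all $|\a|\leq N(s,p,q)$ acts as a bounded pointwise multiplier on $F^s_{p,q}(\mathbb{R}^n)$, and a diffeomorphism $\kappa:V\to U\subset\mathbb{R}^n$ with $|\uD^\a\kappa|\leq C_{|\a|}$ for $|\a|\leq N(s,p,q)$ (pulling back functions with support in $U$) induces a bounded operator $f\mapsto f\circ\kappa$. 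Both statements are established in \cite[Sections~4.2,~4.3]{Tri92} for the full range $0<p<\infty$, $0<q\leq\infty$ and also for $p=q=\infty$; the only change from Lemma \ref{Sob_Rn} is that $N(s,p,q)$ now depends on $p$ and $q$ as well as on $s$, which is relevant because of the low-smoothness range $p<1$.

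Given these tools, the F-space estimate is carried out exactly as for $H^{s,\uT}_p$. Fix a geodesic trivialization $\uT^{\rm geo}=(U^{\rm geo}_\beta,\kappa^{\rm geo}_\beta,h^{\rm geo}_\beta)_{\beta\in J}$ and, for $\alpha\in I$, set $A(\alpha):=\{\beta\in J:\ U_\alpha\cap U^{\rm geo}_\beta\neq\varnothing\}$; by uniform local finiteness, $|A(\alpha)|$ is bounded independently of $\alpha$. Writing $h_\alpha f=\sum_{\beta\in A(\alpha)}h_\alpha h^{\rm geo}_\beta f$, applying the F-space multiplier lemma with the bounds for $h_\alpha\circ\kappa_\alpha$ from (B2), and then the F-space diffeomorphism lemma with the bounds for $\kappa_\alpha^{-1}\circ\kappa^{\rm geo}_\beta$ from (B1), one obtains
\[
\Vert(h_\alpha f)\circ\kappa_\alpha\Vert_{F^s_{p,q}(\mathbb{R}^n)}\lesssim \sum_{\beta\in A(\alpha)}\Vert(h^{\rm geo}_\beta f)\circ\kappa^{\rm geo}_\beta\Vert_{F^s_{p,q}(\mathbb{R}^n)}
\]
with a constant independent of $\alpha$. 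Raising to the $p$-th power, summing over $\alpha$, and swapping the order of summation via $\sum_{\alpha\in I,\,\beta\in A(\alpha)}=\sum_{\beta\in J,\,\alpha\in A(\beta)}$ (again a uniformly bounded inner sum) yields $\Vert f\Vert_{F^{s,\uT}_{p,q}(M)}\lesssim \Vert f\Vert_{F^s_{p,q}(M)}$. The reverse estimate is obtained by symmetry, interchanging the roles of $\uT$ and $\uT^{\rm geo}$.

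The B-space statement is then essentially free. For $p=q$ the identity $B^{s,\uT}_{p,p}(M)=F^{s,\uT}_{p,p}(M)$ noted in \eqref{B-norm} reduces it to the F-space case already handled. For general $0<p,q\leq\infty$ and $s_0<s<s_1$ with $s=(1-\Theta)s_0+\Theta s_1$, Definition \ref{F-koord}(ii) gives
\[
B^{s,\uT}_{p,q}(M)=\bigl(F^{s_0,\uT}_{p,p}(M),F^{s_1,\uT}_{p,p}(M)\bigr)_{\Theta,q},
\]
and by the F-space coincidence the two spaces on the right agree with $F^{s_0}_{p,p}(M)$ and $F^{s_1}_{p,p}(M)$, so the interpolation space equals $B^s_{p,q}(M)$ by the very definition of the latter (which, as noted in the excerpt, is independent of the choice of $s_0,s_1$).

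The main obstacle I anticipate is purely bookkeeping: one must check that the multiplier/diffeomorphism results in the full F-space range still apply with the derivative bounds that (B1) and (B2) supply. Since both admissibility conditions control derivatives of \emph{every} order uniformly in $\alpha$, any finite threshold $N(s,p,q)$ is automatically met, and the argument from the proof of Theorem \ref{indep_H} transfers intact.
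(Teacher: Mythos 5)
Your proposal matches the paper's proof: the paper simply states that the F-space case is proved exactly as Theorem \ref{indep_H} (relying on the pointwise multiplier and diffeomorphism results of \cite[Sections~4.2,~4.3]{Tri92}, which hold for the full $F^s_{p,q}$ scale) and that the B-space case follows from Definition \ref{F-koord}.ii. You spell out the same argument in more detail, correctly noting that the required derivative thresholds now depend on $p,q$ but are met since (B1) and (B2) control all orders.
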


\begin{proof}
 For  F-spaces the proof is the same as the one of Theorem \ref{indep_H}. The claim for  B-spaces then follows from Definition \ref{F-koord}.ii.
\end{proof}

\paragraph{\bf Trace theorem.} The generalization of the Trace Theorem \ref{trace-th} is stated below. In particular, this result improves \cite[Theorem~1, Corollary~1]{Skr90}. 

\begin{thm}\label{trace-th-gen} Let $(M^n,g)$ be a Riemannian manifold together with an embedded $k$-dimensional submanifold $N$ and 
 $(M^n,N^k)$ be of bounded geometry. Furthermore, let $0<p,q\leq \infty$ $(0<p,q< \infty$ or $p=q=\infty$ for F-spaces$)$ and let  
\begin{equation}\label{cond-s}
s-\frac{n-k}{p}>k \left(\frac 1p-1\right)_+.
\end{equation} 
{Then $\Tr_N=\Tr$ is a linear and bounded operator from  $F^s_{p,q}(M)$ onto  $B^{s-\frac{n-k}{p}}_{p,p}(N)$ and $B^s_{p,q}(M)$ onto  $B^{s-\frac{n-k}{p}}_{p,q}(N)$, respectively, i.e., 
\beq\label{trace-mfd-gen}
\Tr_N\, F^s_{p,q}(M)=B^{s-\frac{n-k}{p}}_{p,p}(N)
\qquad \text{and}\qquad 
\Tr_N\, B^s_{p,q}(M)=B^{s-\frac{n-k}{p}}_{p,q}(N).
\eeq}
\end{thm}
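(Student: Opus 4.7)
The plan is to follow the blueprint of Theorem \ref{trace-th} with Fermi coordinates, but to replace the role of $H^s_p$ on $\mathbb{R}^n$ by $F^s_{p,q}(\mathbb{R}^n)$, and to obtain the B-space statement by real interpolation. First I would fix the admissible trivialization $\uT^{\mathrm{FC}}=(U_\alpha,\kappa_\alpha,h_\alpha)_{\alpha\in I}$ of $M$ by Fermi coordinates from Theorem \ref{FC_admis}. By Theorem \ref{indep-F} the $F^s_{p,q}(M)$- and $B^s_{p,q}(M)$-norms are equivalent to their $\uT^{\mathrm{FC}}$-localized versions, and as in the proof of Theorem \ref{trace-th}, the restriction of Fermi coordinates to $N$ yields a geodesic trivialization $\uT^{N,\mathrm{geo}}=(U'_\alpha,\kappa'_\alpha,h'_\alpha)_{\alpha\in I_N}$ of $N$, with $\kappa_\alpha^{-1}(N\cap U_\alpha)=\{0\}^{n-k}\times B_{2R}^k$ for $\alpha\in I_N$. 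This identifies the local trace problem with the flat problem on $(\mathbb{R}^n,\mathbb{R}^k)$.

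For the F-space assertion I would define, exactly as in Step 1 of the proof of Theorem \ref{trace-th},
\[
(\Tr_N f)(x):=\sum_{\alpha\in I_N}\Tr_{\mathbb{R}^k}\bigl[(h_\alpha f)\circ\kappa_\alpha\bigr]\circ(\kappa'_\alpha)^{-1}(x),\qquad x\in N,
\]
and reproduce the chain of estimates \eqref{Tr_est}, substituting $F^s_{p,q}(\mathbb{R}^n)$ for $H^s_p(\mathbb{R}^n)$ and $B^{s-(n-k)/p}_{p,p}(\mathbb{R}^k)$ (which equals $F^{s-(n-k)/p}_{p,p}(\mathbb{R}^k)$ by \eqref{B-norm}) for the target. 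The three Euclidean ingredients needed are: the classical F-space trace theorem $\Tr_{\mathbb{R}^k}F^s_{p,q}(\mathbb{R}^n)=B^{s-(n-k)/p}_{p,p}(\mathbb{R}^k)$, valid exactly under the hypothesis \eqref{cond-s}; the analogue of Lemma \ref{Sob_Rn} for $F^s_{p,q}(\mathbb{R}^n)$ (pointwise multipliers and diffeomorphisms, available in \cite[Sections 4.2, 4.3]{Tri92} for the full parameter range); and a linear bounded universal extension operator $\mathrm{Ex}_{\mathbb{R}^n}$ that can be chosen independent of $s$ within any compact parameter window. With these at hand, boundedness of $\Tr_N\colon F^s_{p,q}(M)\to B^{s-(n-k)/p}_{p,p}(N)$ and surjectivity via the extension
\[
(\mathrm{Ex}_M f')(x):=\sum_{\alpha\in I_N}\bigl[\psi\,\mathrm{Ex}_{\mathbb{R}^n}\!\bigl((h'_\alpha f')\circ\kappa'_\alpha\bigr)\bigr]\circ\kappa_\alpha^{-1}(x)\quad\text{on }U_{2R}(N),
\]
extended by zero elsewhere, follow verbatim from Step 2 of the proof of Theorem \ref{trace-th}, with $\psi=\psi_1\times\psi_2$ as there.

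For the B-space assertion I would use real interpolation rather than redo the combinatorics. Pick $s_0<s<s_1$ close enough to $s$ so that both still satisfy \eqref{cond-s}, and write $s=(1-\Theta)s_0+\Theta s_1$. By Definition \ref{F-koord}(ii), $B^s_{p,q}(M)=\bigl(F^{s_0}_{p,p}(M),F^{s_1}_{p,p}(M)\bigr)_{\Theta,q}$. From the F-space case just proved, $\Tr_N$ is bounded on each endpoint into $B^{s_i-(n-k)/p}_{p,p}(N)=F^{s_i-(n-k)/p}_{p,p}(N)$, so real interpolation together with the standard identity $(B^{\sigma_0}_{p,p}(N),B^{\sigma_1}_{p,p}(N))_{\Theta,q}=B^{\sigma}_{p,q}(N)$ (which on $N$ follows from its Euclidean counterpart via the retraction/coretraction trick described in Remark \ref{rem-B}) yields $\Tr_N\colon B^s_{p,q}(M)\to B^{s-(n-k)/p}_{p,q}(N)$. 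Surjectivity follows in the same way by interpolating the extension operator, since the Euclidean $\mathrm{Ex}_{\mathbb{R}^n}$ and hence $\mathrm{Ex}_M$ can be chosen universal in $s$.

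The main obstacle is really the parameter range. The condition \eqref{cond-s} enters only at the Euclidean trace step for F-spaces and is sharp there; everything else (multipliers, diffeomorphisms, interpolation, partition-of-unity retractions) carries over from $H^s_p$ to $F^s_{p,q}$ and $B^s_{p,q}$ with only notational changes, provided one invokes the full-parameter versions of the multiplier/diffeomorphism results in \cite{Tri92}. A secondary technical point is that the interpolation argument requires choosing $s_0,s_1$ simultaneously in the admissible window \eqref{cond-s}, which is possible because this window is open, and that the extension operator $\mathrm{Ex}_{\mathbb{R}^n}$ be independent of $s$, which is standard.
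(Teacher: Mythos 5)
Your proposal is correct and follows essentially the same route as the paper: the paper's proof also reduces \eqref{trace-mfd-gen} to the Euclidean trace theorem on hyperplanes via the Fermi-coordinate trivialization exactly as in Theorem \ref{trace-th} (citing \cite[Theorem~4.4.2]{Tri92}, iterated for codimension greater than one), and obtains the B-space statement from the real-interpolation definition in Definition \ref{F-koord}.ii, which is precisely your interpolation argument spelled out.
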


\begin{proof} The proof of  \eqref{trace-mfd-gen} runs along the same lines as the proof of Theorem \ref{trace-th}. Choosing Fermi coordinates, via pull back and localization the problem can be reduced to corresponding trace results in $\rn$ on hyperplanes $\mR^{k}$, cf. \cite[Theorem~4.4.2]{Tri92}, where the proof for $k=n-1$ may be found.  The result for general hyperplanes  -- and condition \eqref{cond-s} --  follows by iteration.  
The assertion for B-spaces follows then from  Definition \ref{F-koord}.ii.
\end{proof}

\bibliographystyle{alpha}

\begin{thebibliography}{Jia93}

\bibitem[Aub76]{Aub1}
T.~Aubin.
\newblock {Espaces de Sobolev sur les vari\'et\'es Riemanniennes}.
\newblock {\em Bull. Sci. Math.} 100: 149--173, 1976. 

\bibitem[Aub82]{Aub2}
T.~Aubin.
\newblock {\em Nonlinear Analysis on Manifolds}.
\newblock {Monge-Amp\`ere Equations}, New York, Springer, 1982. 


\bibitem[Eic07]{Eichb}
J.~Eichhorn. 
\newblock {\em Global analysis on open manifolds}.
\newblock {Nova Science Publishers, Inc., New York} x+644,  2007.

\bibitem[Eic91]{Eich}
J.~Eichhorn.
\newblock {The boundedness of connection coefficients and their derivatives}.
\newblock {\em Math. Nachr.} 152: 144--158, 1991. 

\bibitem[Fr00]{Frie}
T.~Friedrich.
\newblock {\em Dirac operators in {R}iemannian geometry}.
\newblock {Graduate Studies in Mathematics, American Mathematical Society, Providence, RI} 25: xvi+195, 2000.

\bibitem[Gro12]{Gro}
N.~Gro\ss e.
\newblock {Solutions of the equation of a spinorial {Y}amabe-type problem on manifolds of bounded geometry}.
\newblock {\em Comm. Part. Diff. Eq.} 37(1): 58--76 2012. 

\bibitem[GN12]{GN}
N.~Gro\ss e, R.~Nakad.
\newblock {Boundary value problems for noncompact boundaries of Spin$^c$ manifolds and spectral estimates}.
\newblock arXiv:1207.4568

\bibitem[Lee01]{Lee}
J.M.~Lee.
\newblock {\em Introduction to topological manifolds}.
 Second edition,    
 \newblock {Graduate Texts in Mathematics} (202), Springer, New York, 2011.
 
\bibitem[Sch01]{Schick01}
T.~Schick.
\newblock {Manifolds with boundary and of bounded geometry}.
\newblock {\em Math. Nachr.}, 223:103--120, 2001.

\bibitem[Shu]{Shu} M.A.~Shubin,
\newblock {Spectral theory of elliptic operators on noncompact manifolds}. 
\newblock {\em M{\'e}thodes semi-classiques, Vol. 1 $($Nantes, 1991$)$, Ast\'erisque} 207:35--108
1992.

\bibitem[Skr90]{Skr90}
L.~Skrzypczak.
\newblock {Traces of Function Spaces of $F^s_{p,q}$--$B^s_{p,q}$ Type on Submanifolds}.
\newblock {\em Math. Nachr.} 46: 137--147, 1990. 

\bibitem[Skr98]{Skr98}
L.~Skrzypczak.
\newblock {Atomic decompositions on manifolds with bounded geometry}.
\newblock {\em Forum Math.} 10: 19--38, 1998. 

\bibitem[SpIV]{Spiv4}
M.~Spivak.
\newblock {\em A comprehensive introduction to differential geometry, {V}ol.{IV}}.
\newblock {Second edition,  Publish or Perish Inc. Wilmington, Del.} viii+561, 1979.

\bibitem[Str83]{strich}
R.S. Strichartz.
\newblock {Analysis of the Laplacian on the complete Riemannian manifold}.
\newblock {\em J. Funct. Anal.} 52: 48--79, 1983. 

\bibitem[Tr86]{TrB}
H.~Triebel.
\newblock {Spaces of {B}esov-{H}ardy-{S}obolev type on complete {R}iemannian manifolds}.
\newblock{\em Ark. Mat.} 24(2): 299--337, 1986.

\bibitem[Tri83]{T-F1}
H.~Triebel.
\newblock {\em Theory of function spaces}, volume~78 of {\em Monographs in
  Mathematics}.
\newblock Birkh\"auser Verlag, Basel, 1983.

\bibitem[Tri92]{Tri92}
H.~Triebel.
\newblock {\em Theory of function spaces {II}}, volume~84 of {\em Monographs in
  Mathematics}.
\newblock Birkh\"auser Verlag, Basel, 1992.

\bibitem[Tri78]{T-interpol}
H.~Triebel.
\newblock {\em Interpolation theory, function spaces, differential operators}, volume 18.  
\newblock { North-Holland Publishing Co.}, Amsterdam, 1978.

\bibitem[Wa66]{War}
F.W.~Warner.
\newblock {Extensions of the {R}auch comparison theorem to submanifolds}.
\newblock {\em Trans. Amer. Math. Soc.} 122: 341--356, 1966.
\end{thebibliography}

 \def\cprime{$'$}

\vfill

{\small
\begin{minipage}[t]{0.45\textwidth}
\noindent
Nadine Gro\ss{}e\\
Mathematical Institute\\  
 University of  Leipzig \\
 Augustusplatz 10\\ 
 04109 Leipzig \\
 Germany\\[1ex]
{\tt grosse@math.uni-leipzig.de}
\end{minipage}\hfill 
\begin{minipage}[t]{0.45\textwidth}
\noindent
Cornelia Schneider\\
Applied Mathematics III\\
University of Erlangen--Nuremberg\\
Cauerstra\ss{}e 11\\
91058 Erlangen\\
Germany\\[1ex]
{\tt schneider@math.fau.de}
\end{minipage}

}

\end{document}